\theoremstyle{plain}
\newtheorem{cor}{Corollary}
\newtheorem{prop}[cor]{Proposition}
\newtheorem{thm}[cor]{Theorem}
\newtheorem*{thm*}{Theorem}
\theoremstyle{definition}
\newtheorem{definition}[cor]{Definition}
\newtheorem{remark}[cor]{Remark}
\newtheorem{assumption}[cor]{Assumption}
\numberwithin{cor}{section}
\numberwithin{equation}{section}
\DeclareMathOperator{\C}{C}
\newcommand{\E}{\mathbb{E}}
\renewcommand{\d}{\delta}
\renewcommand{\and}{\quad\textrm{ and }\quad}
\renewcommand{\P}{\mathbb{P}}
\renewcommand{\a}{\alpha}
\renewcommand{\b}{\beta}
\renewcommand{\O}{\Omega}
\newcommand{\sgn}{\text{sgn}}
\newcommand{\Ent}{\text{Ent}}
\newcommand{\F}{\mathcal F}
\newcommand{\R}{\mathbb{R}}
\newcommand{\N}{\mathbb{N}}
\newcommand{\norm}[1]{\| #1 \|}
\newcommand{\ve}{\varepsilon}
\newcommand{\abs}[1]{|#1|}
\providecommand{\ud}[1]{\, \mathrm{d} #1}
\providecommand{\dx}{\ud{x}}
\providecommand{\dy}{\ud{y}}
\providecommand{\dxi}{\ud \xi}
\providecommand{\deta}{\ud{\eta}}
\providecommand{\ds}{\ud{s}}
\providecommand{\dd}{\ud}
\def\XXint#1#2#3{{\setbox0=\hbox{$#1{#2#3}{\int}$ }
\vcenter{\hbox{$#2#3$ }}\kern-.6\wd0}}
\author{Benjamin Fehrman}
\address{Louisiana State University, Baton Rouge 70802, Louisiana, USA}
\email{fehrman@math.lsu.edu}
\author{Benjamin Gess}
\address{Max Planck Institute for Mathematics in the Sciences, 04103 Leipzig, Germany \newline \indent Fakult\"at f\"ur Mathematik, Universit\"at Bielefeld, 33615 Bielefeld, Germany}
\email{Benjamin.Gess@mis.mpg.de}
\subjclass[2010]{35Q84, 60F10, 60H15, 60K35, 82B21}
\keywords{Dean--Kawasaki equation, large deviations, conservative stochastic PDE}
\date{\today}
\begin{document}

\title{Conservative stochastic PDEs on the whole space}
\begin{abstract}
The purpose of this paper is to establish a well-posedness theory for conservative stochastic partial differential equations on the whole space. This class of stochastic PDEs arises in fluctuating hydrodynamics, and includes the Dean–Kawasaki equation with correlated noise. In combination with the analysis of the authors and Heydecker \cite{FehGesHey2024}, the connection between fluctuating hydrodynamics and macroscopic fluctuation theory in the context of the zero range particle process is made rigorous.
\end{abstract}

\maketitle

\markright{Conservative SPDEs on the whole space}

\section{Introduction}

The purpose of this paper is to extend the well-posedness theory of \cite{FG21} to equations of the type
\begin{equation}\label{i_1} \partial_t\rho = \Delta\Phi(\rho)-\nabla\cdot(\nu(\rho)+\sigma(\rho)\circ\dd\xi)\;\;\textrm{in}\;\;\R^d\times(0,\infty),\end{equation}
with initial data with finite relative entropy with respect to a nonzero, constant density $\gamma\in(0,\infty)$ and for probabilistically stationary space-time noise $\xi$ defined in Section~\ref{sec_setting} below.  These techniques extend to the whole space the results of the two authors \cite{FG21} on the torus, and establish a large deviations principle for the solutions.  The LDP makes rigorous the connection between the small-noise large deviations of the solutions to \eqref{i_1} and the large deviations of the zero range particle process on the whole space which, with the results of Heydecker and the two authors \cite{FehGesHey2024}, establishes the connection between the non-equilibrium statistical mechanics theories of fluctuating hydrodynamics and macroscopic fluctuation theory in this context.

A model example is the generalized Dean--Kawasaki equation with correlated noise
\begin{equation}\label{i_2}\partial_t\rho = \Delta\Phi(\rho)-\nabla\cdot(\Phi(\rho)+\Phi^\frac{1}{2}(\rho)\circ\dd\xi),\end{equation}
for $\Phi(\rho)=\rho^m$ for any $m\in[1,\infty)$, which for the case $m=1$ in It\^o-form becomes
\[\partial_t\rho = \Delta\rho-\nabla\cdot(\rho+\sqrt{\rho}\dd\xi)+\frac{\langle\xi\rangle_1}{2}\Delta\log(\rho),\]
for $\langle \xi\rangle_1$ being the spatially constant quadratic variation of $\xi$ at time $t=1$.  This demonstrates the two fundamental difficulties in treating \eqref{i_2}:  first the singular noise coefficients that are only $\nicefrac{1}{2}$-H\"older continuous, and second the potential lack of integrability and regularity for $\log(\rho)$ in regions that $\rho\simeq 0$ takes small values.  These difficulties were first handled by the authors in \cite{FG21} on the torus by developing the notion of a \emph{renormalized kinetic solution} of \eqref{i_1} in Definition~\ref{d_sol} below.

In comparison to \cite{FG21}, there are several new difficulties specific to the full space case.   The solution theory is based on the equation's kinetic form, which is an $L^1(\R^d)$-based theory.  However, the analysis of the initial fluctuations in the zero range process, see, for example, Benois, Kipnis, and Landim \cite{BKL95}, leads to initial data that has finite relative entropy with respect to a constant, nonzero density.  Since such functions are not integrable, and since even the difference of two such functions need not be integrable, it is needed to extend the solution theory of \cite{FG21} to non-integrable data with finite relative entropy.  For this it is necessary to control certain commutators on the full space, which requires an additional spatial cut-off: see, for example, the analysis of \eqref{cutoff_term} below.   The control of the resulting cut-off errors relies on a novel relative entropy dissipation estimate in Proposition~\ref{prop_entropy} below, which itself requires a careful handling of the resulting Burkholder--Davis--Gundy term at spatial infinity by means of Sobolev embeddings.

The aforementioned kinetic solution theory allows to renormalize the solution $\rho$ away from regions where $\rho$ takes large values.  A primary contribution of \cite{FG21} is to develop a theory that allows to also renormalize the solution away from regions where $\rho$ is small, in order to avoid the singularities of the square root and logarithm.  We emphasize that the act of cutting out small values is rather more extreme than cutting out large values, due to the fact that by cutting out small values the solution will not necessarily satisfy the equation on the set $\{\rho=0\}$.  This is a serious potential source of nonuniqueness which we overcome by insisting that solutions satisfy the relative entropy dissipation estimates \eqref{d_s2} and \eqref{d_s3} in Definition~\ref{d_sol}, which are a consequence of the new estimates Proposition~\ref{prop_entropy} and Proposition~\ref{prop_measure} on the whole space.

The first primary result of this paper extends the well-posedness theory of \cite{FG21} to the whole space for initial data with finite relative entropy with respect to a constant density $\gamma\in(0,\infty)$, and establishes a pathwise $L^1$-contraction for initial data that is integrable with respect to this constant density.  The noise $\xi$ is spatially probabilistically stationary with spatial divergence that satisfies  $\langle\nabla\cdot\xi\rangle_1\in(L^1\cap L^\infty)(\R^d)$.  The function spaces $\Ent_{\Phi,\gamma}$ and $L^1_\gamma$ are defined below in \eqref{def_fre} and \eqref{def_lpg}.

\begin{thm}[cf.\ Theorem~\ref{thm_unique}, Proposition~\ref{prop_entropy}, Proposition~\ref{prop_measure}, Theorem~\ref{thm_exist}]  Let $\xi$ satisfy Assumption~\ref{a_random}, let $\Phi$, $\sigma$, and $\nu$ satisfy Assumptions~\ref{assume_1} and \ref{as_compact}, and let $\gamma\in(0,\infty)$.  Then, for every $\F_0$-measurable $\rho_0\in L^1(\O;\Ent_{\Phi,\gamma}(\R^d))$ there exists a stochastic kinetic solution of \eqref{i_1} in the sense of Definition~\ref{d_sol}.

Furthermore, for $\F_0$-measurable $\rho^1_0,\rho^2_0\in L^1(\O;(L^1_\gamma\cap\Ent_{\Phi,\gamma})(\R^d))$, if $\rho^1,\rho^2$ are stochastic kinetic solutions of \eqref{i_1} in the sense of Definition~\ref{d_sol} with initial data $\rho_0^1,\rho_0^2$ respectively, then $\P$-a.s.\
\[\sup_{t\in[0,T]}\norm{\rho^1(\cdot,t)-\rho^2(\cdot,t)}_{L^1(\R^d)}\leq\norm{\rho^1_0-\rho^2_0}_{L^1(\R^d)}.\]
If $\rho_0\in L^1(\O;\Ent_{\Phi,\gamma}(\R^d))$ is $\F_0$-measurable then any two stochastic kinetic solutions $\rho^1$ and $\rho^2$ of \eqref{i_1} in the sense of Definition~\ref{d_sol} satisfy $\P$-a.s.\ that $\rho^1=\rho^2$ in $\Ent_{\Phi,\gamma}(\R^d)$ for every $t\in[0,T]$.  \end{thm}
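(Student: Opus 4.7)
The plan is to adapt the renormalized kinetic scheme of \cite{FG21} from the torus to $\R^d$, with the new relative-entropy dissipation estimate of Proposition~\ref{prop_entropy} and the kinetic-measure bound of Proposition~\ref{prop_measure} playing the role of the essential whole-space inputs. I would tackle the three claims in the order: pathwise $L^1$-contraction on $L^1_\gamma\cap\Ent_{\Phi,\gamma}$, uniqueness on $\Ent_{\Phi,\gamma}$ by approximation of the datum, and existence by vanishing viscosity plus stochastic compactness.

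For the contraction I would double variables in the kinetic formulation, testing the kinetic equations for the kinetic functions $\chi^i(x,\xi,t)=\ind_{0<\xi<\rho^i(x,t)}$ against a symmetric mollifier in $(x,\xi)$ multiplied by a spatial cut-off $\psi_R(x)$ supported in a ball of radius $R$. In the diagonal mollification limit the parabolic and transport contributions combine exactly as in the torus case, and the Stratonovich-to-It\^o correction cancels the cross quadratic variation of the noise. What remains are (i) commutator errors in which a derivative of $\psi_R$ falls on $\Phi(\rho^i)$, $\nu(\rho^i)$ or $\sigma(\rho^i)$, and (ii) a Burkholder--Davis--Gundy contribution at spatial infinity coming from the martingale, both of which must be shown to vanish as $R\to\infty$. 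I expect the genuine obstacle to be (ii): because $\sigma$ is only $\tfrac12$-H\"older with a Stratonovich correction singular at $\rho\simeq 0$, the quadratic variation is controlled on the whole space only via the Sobolev embedding internal to the proof of Proposition~\ref{prop_entropy}, combined with the small-value renormalization built into Definition~\ref{d_sol}. The commutators in (i) are then handled using the $L^1_\gamma$-integrability and the finite relative entropy.

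Uniqueness in $\Ent_{\Phi,\gamma}$ would then be deduced by approximating the datum: given $\rho_0\in\Ent_{\Phi,\gamma}$, introduce $\rho_0^n:=((\rho_0\wedge n)\vee n^{-1})\ind_{B_n}+\gamma\ind_{B_n^c}$, which lies in $L^1_\gamma\cap\Ent_{\Phi,\gamma}$ and approximates $\rho_0$ in entropy. If $\rho^1,\rho^2$ are two solutions with initial datum $\rho_0$ and $\rho^n$ is the unique solution with datum $\rho_0^n$ provided by $L^1$-contraction, the uniform entropy and kinetic-measure bounds yield joint $L^1_{\text{loc}}$-compactness of the approximating family, and the stability they encode identifies any limit of $\rho^n$ with both $\rho^1$ and $\rho^2$.

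For existence I would regularize the noise, mollify and non-degenerate $\Phi,\sigma,\nu$ so as to remove the singularities at $\rho=0$, and smooth and truncate the initial datum so that the approximating SPDE admits a classical solution in a high Sobolev space. Propositions~\ref{prop_entropy} and \ref{prop_measure} provide apriori control of the relative entropy, a Fisher-information-type gradient quantity, and the parabolic kinetic defect measure, all uniformly in the regularization parameters; these yield $L^1_{\text{loc}}$-compactness together with tightness in a suitable path space. A Skorokhod argument then produces a limit, and the renormalization conditions \eqref{d_s2}--\eqref{d_s3} are inherited by lower semicontinuity of the entropy dissipation under weak convergence of the kinetic measures.
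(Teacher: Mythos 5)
Your plan for the pathwise $L^1$-contraction and for existence matches the paper's in all essentials: doubling of variables in the kinetic formulation with a spatial cutoff $\a_R$, velocity cutoffs at zero and at infinity, identification of measure, error, martingale, conservative and cutoff terms, and (for existence) viscous/smooth regularization, the a priori entropy and kinetic-measure bounds of Propositions~\ref{prop_entropy} and~\ref{prop_measure}, tightness, Skorokhod, and pathwise uniqueness to upgrade to a strong solution. One minor mislocalization: the Sobolev embedding in the uniqueness argument is needed to close the \emph{spatial commutator} in $I^{\textrm{cut}}_t$ at infinity (the terms $\Phi_{\beta,M}(\rho^i)\Delta\a_R$, $\Sigma_{\beta,M}(\rho^i)\Delta\a_R$, via $\|\nabla\Phi^{\nicefrac12}(\rho^i)\|_{L^2}$ from \eqref{d_s2} for $d\geq 2$), not to control the BDG contribution of the martingale; the latter is handled instead by the global Lipschitz bound on $\Theta_{\beta,M}$, the $L^2_{\textrm{loc}}$-integrability of $\sigma(\rho^i)$ from Definition~\ref{d_sol}, and BDG.

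The genuine gap is in the uniqueness on $\Ent_{\Phi,\gamma}(\R^d)$. Your proposal is to approximate $\rho_0$ by $\rho_0^n\in(L^1_\gamma\cap\Ent_{\Phi,\gamma})(\R^d)$ and let "the stability they encode" identify the limit of $\rho^n$ with both $\rho^1$ and $\rho^2$. There is no such stability available. The only stability estimate established is the $L^1$-contraction, and it requires $\rho_0^n-\rho_0\in L^1(\R^d)$; your truncation satisfies $\rho_0^n-\rho_0 = (\gamma-\rho_0)\ind_{B_n^c}$ on $B_n^c$, which is \emph{not} integrable when $\rho_0\notin L^1_\gamma(\R^d)$. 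Compactness of $\{\rho^n\}$ produces \emph{some} limit solution $\tilde\rho$ with datum $\rho_0$, but identifying $\tilde\rho$ with $\rho^1$ and $\rho^2$ is exactly the uniqueness statement you are trying to prove; the argument is circular. The paper's resolution is more elementary and bypasses this issue entirely: run the same doubling-of-variables inequality with $\rho_0^1=\rho_0^2$, so that the $s=0$ term $\int_\R\int_{\R^d}\abs{\chi^1_0-\chi^2_0}^2\varphi_\beta\zeta_M\a_R$ vanishes identically for every choice of cutoff. After sending $\ve,\d\to 0$, the quantity $\int_\R\int_{\R^d}\abs{\chi^1_t-\chi^2_t}^2\varphi_\beta\zeta_M\a_R$ is bounded by error terms that vanish as $R,M\to\infty,\ \beta\to 0$; since $\varphi_\beta\zeta_M\a_R\uparrow 1$ monotonically, the monotone convergence theorem then yields $\int_\R\int_{\R^d}\abs{\chi^1_t-\chi^2_t}^2=0$ even though this quantity is not a priori known to be finite, and hence $\rho^1(t)=\rho^2(t)$ a.e.\ for every $t$. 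You should replace the approximation-of-datum step by this direct argument.
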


Concerning the assumptions on the noise, after writing \eqref{i_1} in its It\^o-form, we obtain
\begin{equation}\label{ns_eq}\partial_t\rho  = \Delta\Phi(\rho)-\nabla\cdot(\nu(\rho)+\sigma(\rho)\dd\xi)+\frac{1}{2}\nabla\cdot\big(\sigma'(\rho)^2\langle\xi\rangle_1\nabla\rho\big)+\frac{1}{4}\nabla\cdot(\sigma(\rho)\sigma'(\rho)\nabla\langle\xi\rangle_1\big),\end{equation}
for the quadratic variation $\langle\xi\rangle_1$ at time $t=1$.   For simplicity, in this work, we restrict to spatially probabilistically stationary noise, for which \eqref{ns_eq} reduces to
\[\partial_t\rho  = \Delta\Phi(\rho)-\nabla\cdot(\nu(\rho)+\sigma(\rho)\dd\xi)+\frac{\langle\xi\rangle_1}{2}\nabla\cdot\big(\sigma'(\rho)^2\nabla\rho\big),\]
with spatially constant quadratic variation $\langle \xi\rangle_1$.  The assumption of spatial probabilistic stationarity is not necessary, and details on the treatment of non-stationary noise can be found in \cite{FG21}.  We furthermore assume that the quadratic variation of the spatial divergence $\langle\nabla\cdot\xi\rangle_1$ is bounded and integrable, which is a property used to treat the cut-off error \eqref{cutoff_term} below at spatial infinity and to close the full space entropy dissipation estimates \eqref{d_s2} and \eqref{d_s3} in Propositions~\ref{prop_entropy} and Propositions~\ref{prop_measure} below.  These are the only two properties of the noise used to prove the well-posedness of the equation.

In order to precisely quantify the scaling in the large deviations principle, we specify a particular class of stationary noise satisfying $\langle\nabla\cdot\xi\rangle_1\in (L^1\cap L^\infty)$.  Precisely, for $\mathbf{a}=(\a,a,A)$ defined by $\a,A\in(0,1)$ and $a=(a_k)_{k\in\N}\in \ell^2(\N)$, we define the noise $\xi^{\mathbf{a}}$ by
\begin{equation}\label{i_eq_noise}\xi^\mathbf{a} = \sqrt{\exp(-A\abs{x}^2)}(\xi*\kappa^\a)+\sqrt{1-\exp(-A\abs{x}^2)}\xi^a,\end{equation}
for a spatial convolution $(\xi*\kappa^\a)$ of a space-time white noise $\xi$, for $\xi^a = \sum_{k=1}^\infty a_k B^k_t$ defined by independent Brownian motions $\{B^k\}_{k\in\N}$ that are independent of $\xi$, and for $\norm{a}_{\ell^2}^2=\norm{\kappa^\a}_{L^2(\R^d)}^2$.  The precise structure of noise \eqref{i_eq_noise} is not essential, and further details can be found in Remark~\ref{remark_a} below.  We then establish a small-noise large deviations principle for the solutions
\begin{equation}\label{i_eq_ldp}\partial_t\rho^\ve = \Delta\Phi(\rho^\ve)-\sqrt{\ve}\nabla\cdot(\sigma(\rho^\ve)\circ\dd\xi^{\mathbf{a}^\ve}),\end{equation}
along certain scaling limits $\ve,\a^\ve,A^\ve\rightarrow 0$ and $\norm{a^\ve}_{\ell^\infty}\rightarrow 0$ with rate function
\begin{equation}\label{i_rate_function}I_{\rho_0}(\rho) = \frac{1}{2}\inf\{\norm{g}^2_{L^2(\R^d\times[0,T])^d}\colon \partial_t\rho = \Delta\Phi(\rho)-\nabla\cdot(\sigma(\rho)g)\;\;\textrm{with}\;\;\rho(0,\cdot)=\rho_0\;\;\textrm{in}\;\;\R^d\times[0,T]\}.\end{equation}
Here the skeleton equation appearing in the rate function is understood in the sense of Definition~\ref{def_skel} below.  The proof is based on the well-posedness of the skeleton equation, which was established on the full space in \cite{FehGesHey2024}, and the weak approach to large deviations first presented in Budhiraja, Dupuis, and Maroulas \cite{BudDupMar2008}, as well as Budhiraja and Dupuis \cite{BudDup2019} and Dupuis and Ellis \cite{DupEll1997}.  In the full space case, the argument is complicated in particular by the relative entropy estimate for the solutions of the controlled SPDE appearing in Proposition~\ref{prop_control} below.  The reason is due to the fact that the noise $\xi^{\mathbf{a}}$ is effectively constant at spatial infinity, and for this reason in the controlled SPDE \eqref{ldp_1} the control $g$ can have an enormous impact at virtually no cost to the energy.  The $\ell^\infty$-norm of $a^\ve$ appears in the scaling regime for the LDP to control this effect.

\begin{thm}[cf. Theorem~\ref{prop_collapse}]  Let $\Phi$ and $\sigma$ satisfy Assumptions~\ref{assume_1},  \ref{as_compact}, and \ref{as_equiv} and let $\xi^{\mathbf{a}^\ve}$ be a sequence defined by \eqref{i_eq_noise} that satisfies $A^\ve,\a^\ve\rightarrow 0$ as $\ve\rightarrow 0$ and that, if $d\geq 2$,
\[\norm{a^\ve}^2_{\ell^\infty} (A^\ve)^{1-\frac{d+2}{2}}\rightarrow 0\;\;\textrm{and}\;\;\ve(\a^\ve)^{-d-2}(A^\ve)^{-\frac{d}{2}}\rightarrow 0,\]
and, if $d=1$,
\[\norm{a^\ve}_{\ell^\infty}\rightarrow 0\;\;\textrm{and}\;\;\ve(\a^\ve)^{-d-2}(A^\ve)^{-\frac{d}{2}}\rightarrow 0.\]
Then, the rate functions $I_{\rho_0}$  defined in \eqref{i_rate_function} are good rate functions with compact level sets on compact sets, and for every $\rho_0\in\Ent_{\Phi,\gamma}(\R^d)$ the solutions $\{\rho^\ve(\rho_0)\}_{\ve\in(0,1)}$ of \eqref{i_eq_ldp} satisfy a large deviations principle with rate function $I_{\rho_0}$ on $L^1([0,T];L^1_{\textrm{loc}}(\R^d))$.  Furthermore, the solutions satisfy a uniform large deviations principle on subsets of $(L^1_\gamma\cap \Ent_{\Phi,\gamma})(\R^d)$ with uniformly bounded entropy with respect to weakly $L^1(\R^d)$-compact subsets.
\end{thm}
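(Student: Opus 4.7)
The plan is to apply the weak convergence approach to large deviations of Budhiraja, Dupuis, and Maroulas. This reduces the LDP to verifying two properties: the goodness of $I_{\rho_0}$ (compactness of sublevel sets in $L^1([0,T];L^1_{\textrm{loc}}(\R^d))$), and a weak convergence/compactness statement for the solutions of the controlled analogue of \eqref{i_eq_ldp}. The uniformity in $\rho_0$ will then follow by tracking the dependence of all estimates on the initial data through its $L^1_\gamma$-norm and $\Ent_{\Phi,\gamma}$-entropy.

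To show $I_{\rho_0}$ is good, I would take a sequence $\rho_n$ with $I_{\rho_0}(\rho_n)\leq M$ and near-minimizing controls $g_n$ with $\tfrac{1}{2}\|g_n\|_{L^2}^2\leq M+1/n$, each solving the skeleton equation with initial data $\rho_0$. The Banach--Alaoglu theorem gives a weak limit $g\in L^2(\R^d\times[0,T])^d$ with $\tfrac{1}{2}\|g\|_{L^2}^2\leq M$. Invoking the well-posedness and stability theory for the skeleton equation on $\R^d$ from \cite{FehGesHey2024}, I can pass to the limit to obtain $\rho_n\to \rho$ in $L^1([0,T];L^1_{\textrm{loc}}(\R^d))$, where $\rho$ is the skeleton solution driven by $g$, which gives $I_{\rho_0}(\rho)\leq M$ and hence compactness of the sublevel set.

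For the LDP itself, I would use the Budhiraja--Dupuis--Maroulas variational formulation. Given any family of $\F_t$-adapted controls $g^\ve$ satisfying $\E\|g^\ve\|_{L^2}^2\leq M$, the Girsanov-shifted solution $\rho^{\ve,g^\ve}$ solves a controlled version of \eqref{i_eq_ldp} with an additional drift of the form $-\nabla\cdot(\sigma(\rho^{\ve,g^\ve})g^\ve)$, paired against the covariance kernel of $\xi^{\mathbf{a}^\ve}$. The core task is then, for any such sequence with $g^\ve$ converging in distribution to some $g$, to prove that $\rho^{\ve,g^\ve}$ converges in distribution to the skeleton solution driven by $g$. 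Tightness of $\{\rho^{\ve,g^\ve}\}$ in $L^1([0,T];L^1_{\textrm{loc}}(\R^d))$ would follow from the relative entropy dissipation estimate Proposition~\ref{prop_control}, while identification of the limit combines the stability of the skeleton equation with standard martingale arguments to show that the It\^o stochastic integral contribution vanishes as $\ve\to 0$.

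The main obstacle is Proposition~\ref{prop_control} itself. Since $\xi^{\mathbf{a}^\ve}$ is effectively the spatially constant Brownian process $\xi^{a^\ve}$ for $|x|\gg 1$, a control can exploit this structure by having large spatial mass at infinity, producing a first-order drift at essentially no $L^2$-cost as measured against the noise. Controlling this forces $\|a^\ve\|_{\ell^\infty}\to 0$ at the prescribed rate, so that the constant-at-infinity component of the noise becomes asymptotically negligible when tested against growing cut-off functions. Simultaneously, the condition $\ve(\a^\ve)^{-d-2}(A^\ve)^{-d/2}\to 0$ is needed to control the It\^o--Stratonovich correction and the spatial derivatives of $\langle\xi^{\mathbf{a}^\ve}\rangle_1$ entering cut-off error terms analogous to \eqref{cutoff_term}, via the Sobolev embedding argument of Proposition~\ref{prop_entropy}. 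The uniform LDP on weakly $L^1$-compact subsets with uniformly bounded relative entropy would then follow because all the above estimates depend on the initial data only through its $L^1_\gamma$-norm and its $\Ent_{\Phi,\gamma}$-entropy, which are uniformly bounded on such subsets; the uniform integrability furnished by weak $L^1$-compactness allows the limit argument in the Budhiraja--Dupuis--Maroulas framework to be carried out uniformly in $\rho_0$.
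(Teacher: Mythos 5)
Your proposal takes essentially the same route as the paper: the paper's proof is a citation to the weak convergence framework of Budhiraja--Dupuis--Maroulas, the uniform Laplace/LDP equivalence of Budhiraja--Dupuis--Salins, and the methods of the authors' torus paper, which are driven by precisely the ingredients you invoke --- the skeleton well-posedness of Theorem~\ref{thm_skel}, the entropy dissipation estimates of Propositions~\ref{prop_entropy} and~\ref{prop_measure}, and most importantly the controlled-SPDE estimates of Proposition~\ref{prop_control}. Your identification of the two roles of the scaling hypotheses is correct in spirit, with one small terminology slip worth flagging: the quantity you should be pointing at is the quadratic variation of the divergence, $\langle\nabla\cdot\xi^{\mathbf{a}^\ve}\rangle_1$, whose $L^1$-norm is bounded by $c\,\ve(\a^\ve)^{-d-2}(A^\ve)^{-d/2}$ and which enters the cut-off commutator \eqref{cutoff_term} and the entropy estimates \eqref{d_s2}--\eqref{d_s3} --- not the spatial derivatives of $\langle\xi^{\mathbf{a}^\ve}\rangle_1$, which vanish identically because the noise is probabilistically stationary and that quadratic variation is a constant. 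The It\^o--Stratonovich correction is instead controlled by $\langle\xi^{\mathbf{a}^\ve}\rangle_1\le c\,\ve(\a^\ve)^{-d}$, which tends to zero as a consequence of the stated conditions since $\a^\ve,A^\ve\to 0$. With that clarification, the argument is the same as the paper's.
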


\subsection{Comments on the literature}  Stochastic PDEs with conservative noise have been considered among others by Lions, Perthame, and Souganidis \cite{LPS13-2,LPS13,LPS14}, Friz and the second author \cite{FG16}, and the second author and Souganidis \cite{GS14,GS17}.  Most recently, the authors \cite{FehGes21} treated equations like \eqref{i_1} on the torus as well as equations with multiplicative noise, including the nonlinear Dawson--Watanabe equation with correlated noise.  Some earlier works include the second author and Souganidis \cite{GS16-2}, the two authors \cite{FG17}, and Dareiotis and the second author \cite{DG18}, and numerical approaches have been developed by Hoel, Karlsen, Risebro, and Storrosten \cite{HKRS17,HKRS18}, Ba\u{n}as, the second author, and Vieth \cite{BanGesVie2020}, and the second author, Perthame, and Souganidis \cite{GPS16}.  Furthermore, stochastic PDEs of porous media type on unbounded domains have been considered, for example, by Barbu and R\"ockner \cite{BarRoc2018}, Barbu, R\"ockner, and Russo \cite{BarRocRus2015}, the second author, R\"ockner, and Wu \cite{GesRocWu2024}, Kim \cite{Kim2006}, Pardoux \cite{Pard1980}, R\"ockner, Wu, and Xie \cite{RocWuXie2018}, and Ren, R\"ockner, and Wang \cite{RenROcWan2007}.

The solution theory is based on the kinetic formulation of the equation introduced by Lions, Perthame, and Tadmor \cite{LPT94} and Perthame \cite{Per1998}.  See also the contributions of Bendahmane and Karlsen \cite{BenKar2004}, Chen and Perthame \cite{CP03}, De Lellis, Otto, and Westdickenberg \cite{DeLOttWes2003}, and Karlsen and Riseboro \cite{KarRis2003}.

Large deviations for conservative stochastic PDE have been previously considered by Mariani \cite{M10} and Bellettini, Bertini, Mariani, and Novaga \cite{BBMN10}, which include the case of asymptotically vanishing dissipation.  The most closely related works are those of the authors \cite{FehGes19,FG21} and the authors and Dirr \cite{DirFehGes19}.  In the context of singular SPDEs with additive or multiplicative noise, we also mention the works of Cerrai and Freidlin \cite{CF11}, Faris and Jona-Lasinio \cite{FJL82}, Jona-Lasinio and Mitter \cite{JLM90}, and Hairer and Weber \cite{HW15}, and in the context of the stochastic porous media equation of R\"ockner, Wang, and Wu \cite{RocWanWu2006} and Zhang \cite{Zha2020}.

There have recently been several works investigating the use of conservative SPDEs to numerically approximate particle systems.   Cornalba and Fischer \cite{CF23} have shown that a system of independent Brownian motions can be approximated to arbitrary order by a discretization of the Dean--Kawasaki SPDE, and these results were extended by Cornalba, Fischer, Ingmanns, and Raithel \cite{CFIR23} to the case of weakly interacting particles.  See also the related works of Djurdjevac, Kremp, and Perkowski \cite{DKP22} and the second author, Wu, and Zhang \cite{GesWuZha2024}.  We also remark that the inference of such fluctuation corrections from observations of the underlying particle system has been studied by Li, Dirr, Embacher, Zimmer, and Reina \cite{LDEZR2019}.

The weak convergence approach to large deviations has been developed, for example, in Budhiraja and Dupuis \cite{BudDup2019}, Budhiraja, Dupuis, and Maroulas \cite{BudDupMar2008}, and Dupuis and Ellis \cite{DupEll1997}, and it has been used in context of singular SPDE to derive large deviation estimates by Cerrai and Debussche \cite{CD19}.  Further applications include Brze\'zniak, Goldys, and Jegaraj \cite{BGJ17}, Dong, Wu, Zhang, and Zhang \cite{DWZZ20}, and Wu and Zhai \cite{WuZhai2020}.

The Dean--Kawasaki equation was introduced by Dean \cite{D96} and Kawasaki \cite{K94} and has recently been analyzed by Donev, Fai, and Vanden-Eijnden \cite{DFVE14}, Donev and Vanden-Eijnden \cite{DVE14}, Lehmann, Konarovskyi, and von Renesse \cite{LKR19}, and Konarovskyi and von Renesse \cite{KR17,KR15} and in the references therein.  Sturm and von Renesse \cite{StuvRe2009} constructed solutions to modified Dean--Kawasaki equations by means of Dirichlet forms, and a regularized Dean--Kawasaki model was derived and analyzed by Cornalba, Shardlow, and Zimmer \cite{CorShaZim2019,CorShaZim2020}.  An overview of the link between macroscopic fluctuation theory (MFT) and fluctuating hydrodynamics in the context of the Dean--Kawasaki equation can be found in Bouchet, Gaw\c edzki, and Nardini \cite{BGN16}. A comprehensive overview of MFT can be found in Bertini, De Sole, Gabrielli, Jona-Lasinio, and Landim \cite{BDSGJLL15}.

Large deviations of the zero range process on the whole space were analyzed by Benois, Kipnis, and Landim \cite{BKL95}.  We also refer to Kipnis and Landim \cite{KL99}, Evans and Hanney \cite{EH05}, and the references therein for a detailed account of theory.

\subsection{Overview}  The assumptions on the nonlinearity $\Phi$ are presented in each section and we observe that every assumption is satisfied by the model examples $\Phi(\xi)=\xi^m$ and $\sigma(\xi)=\xi^\frac{m}{2}$ for every $m\in[1,\infty)$.  Section~\ref{section_rks} introduces the notion of a renormalized kinetic solution of \eqref{i_1} in Definition~\ref{d_sol} and is broken into three parts.  Section~\ref{sec_setting} defines the space of functions with finite relative entropy with respect to a constant density $\gamma\in(0,\infty)$ and introduces the assumptions on the noise $\xi$ in Definition~\ref{a_random}.  In Section~\ref{subsection_rks} we define a renormalized kinetic solution, in Section~\ref{section_unique} we prove that the solutions are unique, and in Section~\ref{section_exist} we prove that solutions exist.  We establish the large deviations principle in Section~\ref{sec_ldp}.

\section{Renormalized kinetic solutions of \eqref{i_1}}\label{section_rks}

\subsection{The setting and the randomness in the equation}\label{sec_setting}

In this section, we fix once and for all the setting, the integrability of the initial condition, and the randomness in the equation.  Due to the relevance to the large deviations of the zero range process, we will study \eqref{i_1} in spaces of finite relative entropy with respect to a fixed constant density $\gamma\in(0,\infty)$.  For every $\gamma\in(0,\infty)$, let $\Psi_{\Phi,\gamma}$ be defined by
\[\Psi_{\Phi,\gamma}(\gamma)=0\;\;\textrm{and}\;\;\Psi_{\Phi,\gamma}'(\xi) = \log\Big(\frac{\Phi(\xi)}{\Phi(\gamma)}\Big),\]
for which we have that $\Psi_{\Phi,\gamma}(\xi)\geq 0$ for every $\xi\in[0,\infty)$, that $\Psi_{\Phi,\gamma}$ is convex, and that $\Psi_{\Phi,\gamma}(\xi)=0$ if and only if $\xi=\gamma$ whenever $\Phi$ is strictly increasing.  In the porous media case $\Phi(\xi)=\xi^m$,
\[\Psi_{\Phi,\gamma}(\xi) =m\Big(\xi\log\big(\frac{\xi}{\gamma}\big)-(\xi-\gamma)\Big).\]
The space of functions with finite relative entropy is then
\begin{equation}\label{def_fre}\Ent_{\Phi,\gamma}(\R^d) =\big\{\rho\colon\R^d\rightarrow\R\;\;\textrm{nonnegative and measurable with}\;\;\int_{\R^d}\Psi_{\Phi,\gamma}(\rho)<\infty\big\},\end{equation}
which is a complete, separable metric space with respect to the metric
 \[d(f,g) = \int_{\R^d}\abs{\tilde{\Psi}_{\Phi,\gamma}(f)-\tilde{\Psi}_{\Phi,\gamma}(g)},\]
 for $\tilde{\Psi}_{\Phi,\gamma}(\xi) = \Psi_{\Phi,\gamma}(\xi)$ if $\xi\geq \gamma$ and $\tilde{\Psi}_{\Phi,\gamma}(\xi) = -\Psi_{\Phi,\gamma}(\xi)$ if $\xi\in[0,\gamma]$.  The completeness and separability follow from the fact that $\tilde{\Psi}_{\Phi,\gamma}$ is convex and strictly increasing.  We also define the shifted $L^p$-spaces
 \begin{equation}\label{def_lpg} L^p_\gamma(\R^d) = \{f\colon\R^d\rightarrow\R\;\;\textrm{nonnegative and measurable with}\;\;\int_{\R^d}(f-\gamma)^p<\infty\}.\end{equation}
We will now introduce the type of noise entering the equation.
 
Let $(\O,\F,\P)$ be a complete probability space with a right-continuous filtration $(\F_t)_{t\in[0,\infty)}$ and with independent, $d$-dimensional, $\F_t$-adapted Brownian motions $\{B^k,\tilde{B}^k\}_{k\in\N}$.  An $\R^d$-valued space-time white noise on $\R^d$ admits the spectral representation
\[\xi = \sum_{k=1}^\infty f_k(x)B^k_t,\]
for an orthonormal $L^2(\R^d)$-basis $\{f_k\}_{k\in\N}$, for which $\dd \xi = \sum_{k=1}^\infty f_k(x)\dd B^k_t$ is distributionally a space-time white noise.  For every $\a\in(0,1)$ let $\kappa^\a$ be a standard compactly supported convolution kernel of scale $\a$ on $\R^d$, and for every $\a\in(0,1)$ let $\xi^\a$ be defined by
\begin{equation}\label{n_1}\xi^\a= (\xi*\kappa^\a) = \sum_{k=1}^\infty (f_k*\kappa^\a)B^k_t.\end{equation}
This noise is probabilistically stationary in the sense that, for every $(x,t)\in\R^d\times[0,T]$,
\[\langle \xi^\a\rangle_t(x) = t\norm{\kappa^\a}^2_{L^2(\R^d)}.\]
We also introduce an independent, spatially constant noise, for every $a=(a_k)_{k\in\N}\in \ell^2(\N)$,
\begin{equation}\label{n_2}\xi^a = \sum_{k=1}^\infty a_k\tilde{B}^k_t,\end{equation}
for which we have that $\langle \xi^a\rangle_t(x) = t\norm{a}_{\ell^2}^2$.  We will study a spatially probabilistically stationary noise based on the above two constructions.

\begin{assumption}\label{a_random} Let $(\O,\F,\P)$ be a complete probability space equipped with a right-continuous filtration $(\F_t)_{t\in[0,\infty)}$ and independent, $d$-dimensional, $\F_t$-adapted Brownian motions $\{B^k,\tilde{B}^k\}_{k\in\N}$.  Let $\a\in(0,1)$, let $\kappa^\a$ be a standard compactly supported convolution kernel on $\R^d$ of scale $\a$, and let $a\in\ell^2(\N)$ satisfy $\norm{a}^2_{\ell^2}=\norm{\kappa^\a}^2_{L^2(\R^d)}$.  For every $A\in(0,1)$ and $\mathbf{a}=(\a,a,A)$, let $\xi^{\mathbf{a}}$ be the noise
\[\xi^{\mathbf{a}} = \sqrt{\exp(-A\abs{x}^2)}\xi^\a+\sqrt{1-\exp(-A\abs{x}^2)}\xi^a,\]
for $\xi^\a$ and $\xi^a$ defined in \eqref{n_1} and \eqref{n_2} above. \end{assumption}

\begin{remark}\label{remark_a}  There are two essential aspects of the noise defined in Assumption~\ref{a_random} for our arguments.  With no changes to the proofs in Sections~\ref{section_unique} and \ref{section_exist} we could consider arbitrary noise of the form
\[\xi(x,t) = \sum_{k\in\N}f_k(x)B^k_t,\]
provided that the noise is spatially probabilistically stationary in the sense that $\langle\xi\rangle_1 = \sum_{k=1}^\infty f_k^2$ is bounded and constant on $\R^d$, and provided that $\langle \nabla\cdot\xi\rangle_1  = \sum_{k=1}^\infty \abs{\nabla f_k}^2\in (L^1\cap L^\infty)(\R^d)$.  We consider the specific noise $\xi^{\textbf{a}}$ because it allows us to identify precisely the scaling regime for the large deviations principle of Theorem~\ref{prop_collapse} below.  However, while the probabilistic stationarity and exponential cutoff somewhat simplify the structure and analysis of the equation, they are not necessary.   Details on the treatment of non-stationary noise can be found in \cite{FG21}.\end{remark}

\subsection{Renormalized kinetic solutions of \eqref{i_1}.}\label{subsection_rks}

We will rewrite \eqref{i_1} in its It\^o-formulation, for which we have, using Assumption~\ref{a_random} that, for $\mathbf{a}=(\a,a,A)$ and for the spatially constant quadratic variation $\langle\xi^\mathbf{a}\rangle_1$ at time $t=1$,
\begin{equation}\label{k_1}\partial_t\rho = \Delta\Phi(\rho)-\nabla\cdot(\nu(\rho)+\sigma(\rho)\dd\xi^{\mathbf{a}})+\frac{\langle\xi^\mathbf{a}\rangle_1}{2}\nabla\cdot((\sigma'(\rho))^2\nabla\rho).\end{equation}
The derivation of this equation relies on the fact that the noise is probabilistically stationary in the sense of Remark~\ref{remark_a}.  In the case of the generalized Dean--Kawasaki equation
\[\partial_t\rho = \Delta\rho-\nabla\cdot(\sqrt{\rho}\circ\dd\xi^{\mathbf{a}}),\]
equation \eqref{k_1} already illustrates several difficulties.  The first is the irregularity of the noise coefficient, which is only $\nicefrac{1}{2}$-H\"older continuous and appears under the divergence.  The second is the singularity of the Stratonovich-to-It\^o correction, which in this case takes the form $\frac{\langle\xi^\mathbf{a}\rangle_1}{8}\Delta\log(\rho)$.  Since the logarithm of the solution is not known to be $H^1$-regular, we cannot interpret \eqref{k_1} based on its classical weak formulation.  We instead introduce a generalized solution theory based on the equation's kinetic form.

Since a more complete overview of the kinetic formulation can be found in \cite{CP03} and \cite{FG21}, we will only highlight some of the main ideas here.  The derivation is based on studying the equation satisfied by nonlinear functions $S$ of the solution $\rho$.  When $S$ is convex, an application of It\^o's formula and a viscous regularization of the equation lead to the entropy inequality that, for the spatially inhomogenous quadratic variation $\langle \nabla\cdot\xi^\mathbf{a}\rangle_1 $ at time $t=1$, for every nonnegative $\psi\in\C^\infty_c(\R^d)$,
\begin{align*}
& \int_{\R^d}S(\rho_r)\psi\Big|_{r=0}^t \leq  -\int_0^t\int_{\R^d}\psi S''(\rho)\Phi'(\rho)\abs{\nabla\rho}^2-\int_0^t\int_{\R^d}S'(\rho) \Phi'(\rho)\nabla\rho\cdot\nabla\psi
\\ & \quad -\int_0^t\int_{\R^d}\psi S'(\rho)\nabla\cdot(\nu(\rho)+\sigma(\rho)\dd\xi^{\mathbf{a}})-\frac{\langle\xi^\mathbf{a}\rangle_1}{2}\int_0^t\int_{\R^d}S'(\rho)\sigma'(\rho)^2\nabla\rho\cdot\nabla\psi
\\ & \quad +\frac{1}{2}\int_0^t\int_{\R^d}\langle \nabla\cdot\xi^\mathbf{a}\rangle_1 \sigma^2(\rho) S''(\rho)\psi.
\end{align*}
This inequality relies crucially on the nonnegativity of $S''(\rho)\psi$, which explains the convexity requirement on $S$ and the nonnegativity of $\psi$.  The kinetic formulation quantifies the entropy inequality exactly, which allows to consider signed test functions $\psi$ and nonconvex entropies $S$---essential requirements for our solution theory here, since it will be necessary for us to cut out both large and small values of the solution in order to give a meaning to \eqref{k_1}.

The kinetic function $\overline{\chi}$ is defined after introducing an additional velocity variable $\xi\in\R$, for which we have that
\[\overline{\chi}(\xi,s) = \mathbf{1}_{\{0<\xi<s\}}-\mathbf{1}_{\{s<\xi<0\}},\]
and the kinetic function $\chi$ of the solution $\rho$ is defined by
\[\chi(\xi,x,t) = \overline{\chi}(\xi,\rho(x,t)).\]
Based on the distributional equalities
\[\nabla_x\chi(\xi,x,t) = \delta_{\rho(x,t)}\nabla\rho(x,t)\;\;\textrm{and}\;\;\partial_\xi\chi(\xi,x,t) = \delta_0-\delta_{\rho(x,t)}\;\;\textrm{and}\;\;S(\rho)=\int_\R\chi(\xi,x,t) S'(\xi)\dxi,\]
for the one-dimensional Dirac delta distribution $\delta_0$ and $\delta_{\rho(x,t)}=\delta_0(\xi-\rho(x,t))$, we have from the above equation that, for every smooth $S$ satisfying $S'(0)=0$, for $\Psi(\xi,x) = S'(\xi)\psi(x)$,
\begin{align*}
& \int_\R\int_{\R^d}\chi \Psi\Big|_{r=0}^t \leq  -\int_0^t\int_\R\int_{\R^d}\big(\delta_\rho\Phi'(\xi)\abs{\nabla\rho}^2\big)\partial_\xi\Psi-\int_0^t\int_{\R^d}\Phi'(\rho)\nabla\rho\cdot(\nabla_x\Psi)(x,\rho)
\\ & \quad -\int_0^t\int_{\R^d}\nabla_x\cdot(\nu(\rho)+\sigma(\rho)\dd\xi^{\mathbf{a}})\Psi(x,\rho)-\frac{\langle\xi^\mathbf{a}\rangle_1}{2}\int_0^t\int_{\R^d} \sigma'(\rho)^2\nabla_x\rho\cdot(\nabla_x\Psi)(x,\rho)
\\ & \quad +\frac{1}{2}\int_0^t\int_{\R^d} \langle \nabla\cdot\xi^\mathbf{a}\rangle_1 \sigma^2(\rho)(\partial_\xi\Psi)(x,\rho).
\end{align*}
The kinetic formulation quantifies this inequality exactly using a kinetic defect measure, which is a nonnegative Radon measure $q$ on $\R^d\times\R\times[0,T]$ that satisfies, in the sense of measures,
\[\delta_{\rho(x,t)}(\xi)\Phi'(\xi)\abs{\nabla\rho}^2\leq q(\xi,x,t),\]
for which we have, using the density of linear combinations of functions of the type $S'(\xi)\psi(x)$ in $\C^\infty_c(\R^d\times\R)$, for every $\Psi\in\C^\infty_c(\R^d\times\R)$,
\begin{align*}
& \int_\R\int_{\R^d}\chi \Psi\Big|_{r=0}^t =  -\int_0^t\int_\R\int_{\R^d}\partial_\xi\Psi q-\int_0^t\int_{\R^d}\Phi'(\rho)\nabla\rho\cdot(\nabla_x\Psi)(x,\rho)
\\ & \quad -\int_0^t\int_{\R^d}\nabla_x\cdot(\nu(\rho)+\sigma(\rho)\dd\xi^{\mathbf{a}})\Psi(x,\rho)-\frac{\langle\xi^\mathbf{a}\rangle_1}{2}\int_0^t\int_{\R^d} \sigma'(\rho)^2\nabla_x\rho\cdot(\nabla_x\Psi)(x,\rho)
\\ & \quad +\frac{1}{2}\int_0^t\int_{\R^d} \langle \nabla\cdot\xi^\mathbf{a}\rangle_1\sigma^2(\rho)(\partial_\xi\Psi)(x,\rho).
\end{align*}
This equation is the basis for our solution theory, which allows us to consider test functions that are compactly supported in $\R^d$ and in the interval $(0,\infty)$ in the velocity variable.  These choices enforce integrability by localizing the solutions away from infinity and they avoid the singularities of the coefficients at zero.  

We first define the notion of kinetic defect measure and then define a renormalized kinetic solution of \eqref{k_1}.  In what follows, we will often evaluate the derivative of a function $\psi\in\C^\infty_c(\R^d\times\mathbb{R})$ at the point $\xi=\rho(x,t)$.  We will write $\nabla$ for the gradient in the spatial $x$-variable and we will write
\[(\nabla\psi)(\rho(x,t),x)=\nabla\psi(\xi,x)\rvert_{\xi=\rho(x,t)}\]
to mean the gradient $\nabla\psi$ evaluated at the point $(\rho(x,t),x)$. Also, for $\F_t$-adapted processes $g_t\in L^2(\O\times[0,T];L^2(\R^d))$ and $h_t\in L^2(\O\times[0,T];H^1(\R^d))$, we will write
\[\int_0^t\int_{\R^d}g_s\nabla\cdot\Big(h_s\dd\xi^{\mathbf{a}}\Big) =\sum_{k=1}^\infty\Big(\int_0^t\int_{\R^d}g_sf_k\nabla h_s\cdot \dd B^k_s+\int_0^t\int_{\R^d}g_sh_s\nabla f_k\cdot \dd B^k_t\Big),\]
for stochastic integrals interpreted in the It\^o sense.  Finally, the space $L^1_{\textrm{loc}}(\R^d)$ is equipped with the topology of local strong $L^1$-convergence.  That is, a sequence $\rho_n\rightarrow\rho$ in $L^1_{\textrm{loc}}(\R^d)$ if and only if $\rho_n\rightarrow\rho$ strongly in $L^1(B_R)$ for every $R>0$.

\begin{definition}  Under Assumption~\ref{a_random}, a kinetic measure is a map $q$ from $\O$ to the space of nonnegative, locally finite Radon measures on $\R^d\times\R\times[0,T]$ that satisfies the property that, for every $\psi\in\C^\infty_c(\R^d\times\R)$,
\[(\omega,t)\in\O\times[0,T]\rightarrow \int_0^t\int_\mathbb{R}\int_{\R^d}\psi(\xi,x)\dd q(\omega)\;\;\textrm{is $\F_t$-predictable.}\]
\end{definition}

 \begin{definition}\label{d_sol}  Under Assumption~\ref{a_random}, let $\gamma\in(0,\infty)$ and let $\rho_0\in L^1(\O;\Ent_{\Phi,\gamma}(\R^d))$ be $\F_0$-measurable.  A \emph{stochastic kinetic solution} of \eqref{k_1} is a nonnegative, $\P$-a.s.\ continuous $L^1_{\textrm{loc}}(\R^d)$-valued $\F_t$-predictable function $\rho\in L^\infty(\O\times[0,T];\Ent_{\Phi,\gamma}(\R^d))$ that satisfies the following two properties.
\begin{enumerate}[(i)]
\item \emph{Local relative integrability of the fluxes}:  we have that
\[(\sigma(\rho)-\sigma(\gamma))\in L^2(\O;L^2_{\textrm{loc}}(\R^d\times[0,T]))\;\;\textrm{and}\;\;(\nu(\rho)-\nu(\gamma))\in L^1(\O;L^1_{\textrm{loc}}(\R^d\times[0,T])^d).\]
\item \emph{The entropy estimate}: we have that
\begin{equation}\label{d_s2} \sup_{t\in[0,T]}\int_{\R^d}\Psi_{\Phi,\gamma}(\rho)+\int_0^T\int_{\R^d}\abs{\nabla\Phi^\frac{1}{2}(\rho)}^2<\infty.\end{equation}
\end{enumerate}
Furthermore, there exists a kinetic measure $q$ that satisfies the following three properties.
\begin{enumerate}[(i)]
\setcounter{enumi}{3}
\item \emph{Regularity and the entropy estimate}: $\P$-a.s.\ as nonnegative measures on $\R^d\times\R\times[0,T]$,
\begin{equation}\label{d_s3}\delta_{\rho(x,t)}\Phi'(\xi)\abs{\nabla\rho}^2\leq q\;\;\textrm{and}\;\;\int_0^T\int_0^\infty\int_{\R^d}\frac{1}{\xi}q<\infty.\end{equation}
\item \emph{Vanishing at infinity}:  we have $\P$-a.s. that
\[\liminf_{M\rightarrow\infty}\big(q(\R^d\times[M,M+1]\times[0,T])\big)=0.\]
\item \emph{The equation}: for every $\psi\in \C^\infty_c(\R^d\times(0,\infty))$, $\P$-a.s.\ for every $t\in[0,T]$,
\begin{align}\label{2_5000}
& \int_\mathbb{R}\int_{\R^d}\chi(\xi,x,r)\psi(\xi,x) = \int_\mathbb{R}\int_{\R^d}\overline{\chi}(\rho_0)\psi(\xi,x)-\int_0^t\int_\R\int_{\R^d}\partial_\xi\psi q
\\ \nonumber & \quad -\int_0^t\int_{\R^d}\Phi'(\rho)\nabla\rho\cdot(\nabla_x\psi)(x,\rho)-\int_0^t\int_{\R^d}\nabla\cdot(\nu(\rho)+\sigma(\rho)\dd\xi^{\mathbf{a}})\psi(x,\rho)
\\ \nonumber & \quad -\frac{\langle\xi^\mathbf{a}\rangle_1}{2}\int_0^t\int_{\R^d} \sigma'(\rho)^2\nabla\rho\cdot(\nabla_x\psi)(x,\rho)+\frac{1}{2}\int_0^t\int_{\R^d} \langle \nabla\cdot\xi^\mathbf{a}\rangle_1\sigma^2(\rho)(\partial_\xi\psi)(x,\rho).
\end{align}
\end{enumerate}
\end{definition}

\subsection{Uniqueness of renormalized kinetic solutions}\label{section_unique}  We will establish the uniqueness of solutions for coefficients satisfying the following assumption.  Furthermore, we show that solutions with initial data in $(L^1_\gamma\cap\Ent_{\Phi,\gamma})(\R^d)$ satisfy a pathwise $L^1$-contraction.

\begin{assumption}\label{assume_1}  Assume that $\Phi,\sigma\in\C([0,\infty))\cap \C^{1,1}_{\textrm{loc}}((0,\infty))$ and $\nu\in\C([0,\infty))^d\cap \C^1_{\textrm{loc}}((0,\infty))^d$ satisfy that $\Phi(0)=\sigma(0)=\nu(0)=0$, that $\Phi'(\xi)>0$ for every $\xi\in(0,\infty)$, and the following three properties.
\begin{enumerate}[(i)]
\item There exists $c\in(0,\infty)$ such that
\begin{equation}\label{assume_f10}\limsup_{\xi\rightarrow 0^+}\frac{\sigma^2(\xi)}{\xi}\leq c\;\;\textrm{and}\;\;\sup_{\xi\in(0,\infty)}\frac{\Phi(\xi)}{\xi\Phi'(\xi)}\leq c.\end{equation}
\item There exists $c\in[1,\infty)$ such that
\begin{equation}\label{assume_f6} \Big(\sup\nolimits_{\xi'\in[0,\xi]}\sigma^2(\xi')\Big)\leq c(1+\xi+\sigma^2(\xi))\;\;\textrm{for every}\;\;\xi\in[0,\infty).\end{equation}
\item There exists $c\in[1,\infty)$ such that
\begin{equation}\label{assume_f7}\Big(\sup\nolimits_{\xi'\in[0,\xi]}\abs{\nu(\xi')}\Big)\leq c(1+\xi+\abs{\nu(\xi)})\;\;\textrm{for every}\;\;\xi\in[0,\infty).\end{equation}
\end{enumerate}
\end{assumption}

\begin{remark}  The role of assumption \eqref{assume_f6} is to guarantee that the approximations $\Theta_{\beta,M}(\rho)$ defined prior to \eqref{3_988} below converge $L^2_{\textrm{loc}}$-strongly to $\sigma(\rho)$.  Technically, it is used to guarantee the following condition:  for every $\rho\in L^\infty([0,T];L^1(\R^d))$ that satisfies $\sigma(\rho)\in L^2_{\textrm{loc}}(\R^d\times[0,T])$, we have that
\begin{equation}\label{assume_f1}\lim_{M\rightarrow\infty}\Big(\sup\nolimits_{\xi\in[M,(M+1)\wedge \rho]}\abs{\sigma(\xi)}\mathbf{1}_{\{\rho>M\}}\Big)=0\end{equation}
strongly in $L^2_{\textrm{loc}}(\R^d\times[0,T])$, which follows from \eqref{assume_f6} and an application of Chebyshev's inequality.  Assumption \eqref{assume_f6} could be replaced by the somewhat more general condition \eqref{assume_f1} with no change to the arguments.  Assumption~\eqref{assume_f7} is used in the identical way for $\nu$.  Technically, it guarantees that for every $\rho\in L^\infty([0,T];L^1(\R^d))$ that satisfies $\nu(\rho)\in L^1_\textrm{loc}([0,T];L^1(\R^d))$ we have that
\[\lim_{M\rightarrow\infty}\Big(\sup\nolimits_{\xi\in[M,(M+1)\wedge \rho]}\abs{\nu(\xi)}\mathbf{1}_{\{\rho>M\}}\Big)=0\;\;\textrm{strongly in}\;\;L^1_{\textrm{loc}}(\R^d\times[0,T]).\]
In this case the $L^1_{\textrm{loc}}$-integrability suffices, where for $\sigma$ the $L^2_{\textrm{loc}}$-integrability is used to treat certain stochastic integrals.  Similarly to the above, Assumption \eqref{assume_f7} could be replaced by this somewhat more general condition. \end{remark}

\begin{prop}\label{prop_measure_unique}  If $\rho$ is a stochastic kinetic solution of \eqref{i_1} in the sense of Definition~\ref{d_sol} with nonnegative, $\F_0$-measurable initial data $\rho_0\in L^1(\O;L^1(\R^d))$, it follows $\P$-a.s.\ that
\[\lim_{\beta\rightarrow 0}\Big(\beta^{-1}q(\R^d\times [\nicefrac{\beta}{2},\beta]\times[0,T])\Big)= 0.\]
\end{prop}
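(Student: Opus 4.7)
The plan is to obtain the conclusion as a direct consequence of the sharper integrability estimate $\int_0^T\int_0^\infty\int_{\R^d}\xi^{-1}q<\infty$ built into Definition~\ref{d_sol}~(iv) via \eqref{d_s3}. Once this $\P$-a.s.\ finiteness is in hand, the proposition is a one-line Chebyshev-type bound followed by dominated convergence.

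Concretely, I would first bound, using that $\xi\leq\beta$ on the slab $[\nicefrac{\beta}{2},\beta]$ and that $q$ is a nonnegative measure,
\[\beta^{-1}q\bigl(\R^d\times[\nicefrac{\beta}{2},\beta]\times[0,T]\bigr)=\int_0^T\int_{\nicefrac{\beta}{2}}^{\beta}\int_{\R^d}\frac{1}{\beta}\,q\leq\int_0^T\int_{\nicefrac{\beta}{2}}^{\beta}\int_{\R^d}\frac{1}{\xi}\,q.\]
I would then apply the dominated convergence theorem to the measure $q$ on $\R^d\times(0,\infty)\times[0,T]$. For any fixed $\xi\in(0,\infty)$ the indicator $\mathbf{1}_{[\nicefrac{\beta}{2},\beta]}(\xi)$ vanishes for every $\beta<\xi$, so the integrand $\xi^{-1}\mathbf{1}_{[\nicefrac{\beta}{2},\beta]}(\xi)$ converges pointwise to zero as $\beta\to 0$, while it is uniformly dominated in $\beta$ by the $q$-integrable function $\xi^{-1}\mathbf{1}_{(0,\infty)}(\xi)$ thanks to \eqref{d_s3}. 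Hence the right-hand side above vanishes $\P$-a.s.\ as $\beta\to 0$, which yields the claim.

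I do not foresee any genuine obstacle for this proposition: the $L^1$-integrability of the initial datum plays no role in the argument and appears only because the statement is invoked in the $L^1$-contraction portion of Section~\ref{section_unique}. The only structural inputs are the nonnegativity of the kinetic defect measure and the weighted integrability of $q$ near $\xi=0$ encoded in \eqref{d_s3}; beyond these, one needs only the elementary comparison $\beta^{-1}\leq\xi^{-1}$ valid on the slab $\xi\in[\nicefrac{\beta}{2},\beta]$.
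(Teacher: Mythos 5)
Your proposal is correct and is essentially identical to the paper's own proof: both use the comparison $\beta^{-1}\leq\xi^{-1}$ on the slab $[\nicefrac{\beta}{2},\beta]$ to dominate the quantity by $\int_0^T\int_0^\infty\int_{\R^d}\xi^{-1}q$, which is finite by \eqref{d_s3}, and then invoke the dominated convergence theorem. Your observation that the $L^1$-integrability of $\rho_0$ plays no role in the argument itself is also accurate.
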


\begin{proof}  Since we have for every $\beta\in(0,1)$ that
\[\beta^{-1}q(\R^d\times [\nicefrac{\beta}{2},\beta]\times[0,T])\leq \int_0^T\int_{\frac{\beta}{2}}^\beta\int_{\R^d}\xi^{-1}q<\int_0^T\int_0^\infty\int_{\R^d}\frac{1}{\xi}q,\]
the claim follows from the dominated convergence theorem.\end{proof}

\begin{thm}\label{thm_unique}  Let $\xi^{\mathbf{a}}$ satisfy Assumption~\ref{a_random}, let $\Phi$, $\sigma$, and $\nu$ satisfy Assumption~\ref{assume_1}, let $\gamma\in(0,\infty)$, and let $\rho^1_0,\rho^2_0\in L^1(\O;(L^1_\gamma\cap\Ent_{\Phi,\gamma})(\R^d))$ be $\F_0$-measurable.  If $\rho^1,\rho^2$ are stochastic kinetic solutions of \eqref{i_1} in the sense of Definition~\ref{d_sol} with initial data $\rho_0^1,\rho_0^2$ respectively, then $\P$-a.s.\
\[\sup_{t\in[0,T]}\norm{\rho^1(\cdot,t)-\rho^2(\cdot,t)}_{L^1(\R^d)}\leq\norm{\rho^1_0-\rho^2_0}_{L^1(\R^d)}.\]
Furthermore, if $\rho_0\in L^1(\O;\Ent_{\Phi,\gamma}(\R^d))$ is $\F_0$-measurable then any two stochastic kinetic solutions $\rho^1$ and $\rho^2$ of \eqref{i_1} in the sense of Definition~\ref{d_sol} satisfy $\P$-a.s.\ that $\rho^1=\rho^2$ in $\Ent_{\Phi,\gamma}(\R^d)$ for every $t\in[0,T]$.
\end{thm}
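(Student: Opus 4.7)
\medskip

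\noindent\textbf{Proof proposal.}
The plan is to adapt the doubling of variables argument for kinetic solutions, as in \cite{FG21}, to the whole-space setting by inserting a spatial cutoff at macroscopic scale. Let $\varphi_R\in\C^\infty_c(\R^d)$ satisfy $\varphi_R=1$ on $B_R$ and $\supt\varphi_R\subset B_{2R}$, let $\zeta_{\beta,M}\in\C^\infty_c((0,\infty))$ equal one on $[\beta,M]$ and vanish outside $[\beta/2,M+1]$, and let $\kappa_\delta(x_1-x_2)$ and $\eta_\epsilon(\xi_1-\xi_2)$ be spatial and velocity mollifiers. First I would test the kinetic identity \eqref{2_5000} for $\chi^1=\overline\chi(\xi_1,\rho^1(x_1,t))$ against a test function obtained by convolving $\chi^2=\overline\chi(\xi_2,\rho^2(x_2,t))$ with $\kappa_\delta\eta_\epsilon$ and multiplying by $\varphi_R(x_1)\zeta_{\beta,M}(\xi_1)$, and symmetrically for $\rho^2$. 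Summing the two identities and applying It\^o's product formula to $\chi^1\chi^2$ produces an identity for $\int_\R\int_{\R^d}\varphi_R\zeta_{\beta,M}(\chi^1+\chi^2-2\chi^1\chi^2)\,d\xi\,dx$, which will be passed to the limit $\delta,\epsilon\to0$, then $\beta\to0,\,M\to\infty$, and finally $R\to\infty$. The crucial identity $\int_\R(\chi^1+\chi^2-2\chi^1\chi^2)\,d\xi=\abs{\rho^1-\rho^2}$ then delivers the desired $L^1$-contraction.

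In the resulting identity, the parabolic dissipation terms $\Phi'(\rho^i)\abs{\nabla\rho^i}^2$ combine with the doubling cross term to give the nonnegative parabolic defect controlled by the kinetic measures $q^1,q^2$ via \eqref{d_s3}. The Stratonovich-to-It\^o corrections and the stochastic integrals for $\sigma(\rho^1)\dd\xi^{\mathbf a}$ and $\sigma(\rho^2)\dd\xi^{\mathbf a}$, once the cross quadratic-covariation is computed from It\^o's product formula, cancel up to a nonnegative term proportional to $\langle\xi^{\mathbf a}\rangle_1(\sigma(\rho^1)-\sigma(\rho^2))^2$ plus a symmetric commutator involving $\langle\nabla\cdot\xi^{\mathbf a}\rangle_1$; here the spatial probabilistic stationarity (so that $\langle\xi^{\mathbf a}\rangle_1$ is constant in $x$) is used decisively, exactly as in Remark~\ref{remark_a}. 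The limits $\delta,\epsilon\to0$ proceed along standard kinetic-solution lines using the local integrability of $\sigma(\rho)-\sigma(\gamma)$ and $\nu(\rho)-\nu(\gamma)$ from Definition~\ref{d_sol}. The limit $\beta\to0$ is handled by Proposition~\ref{prop_measure_unique}, which kills the velocity-derivative term from $\partial_\xi\zeta_{\beta,M}$ near zero, while the limit $M\to\infty$ uses the vanishing-at-infinity condition of Definition~\ref{d_sol} on an appropriately chosen subsequence together with \eqref{assume_f10} for the coefficient growth.

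The main obstacle is the cutoff error generated by derivatives of $\varphi_R$, in particular the terms arising from $(\nabla\varphi_R)\sigma(\rho)\dd\xi^{\mathbf a}$ and $(\nabla\varphi_R)\cdot\nu(\rho)$, which involve $\rho$ rather than $\rho-\gamma$ and hence are not directly controlled by the entropy $\int\Psi_{\Phi,\gamma}(\rho)$. The proof strategy is to subtract off the contribution of the constant state $\gamma$ and exploit the fact that this constant state is itself a solution of \eqref{i_1} up to a noise term whose variance is $\langle\nabla\cdot\xi^{\mathbf a}\rangle_1\sigma^2(\gamma)$; Assumption~\ref{a_random} guarantees $\langle\nabla\cdot\xi^{\mathbf a}\rangle_1\in (L^1\cap L^\infty)(\R^d)$, which is used in the Burkholder--Davis--Gundy estimate of the martingale cutoff term together with the $L^2_{\textrm{loc}}$-integrability of $\sigma(\rho^i)-\sigma(\gamma)$ from \eqref{d_s2} and the Sobolev-type control of the flux provided by Proposition~\ref{prop_entropy}. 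After taking expectation, the stochastic cutoff term is bounded by $C\norm{\langle\nabla\cdot\xi^{\mathbf a}\rangle_1}_{L^\infty}\E\int_0^T\int_{\mathrm{supp}(\nabla\varphi_R)}\abs{\sigma(\rho^i)-\sigma(\gamma)}^2$, which vanishes as $R\to\infty$ by \eqref{d_s2} and dominated convergence; the deterministic drift cutoff is treated analogously using \eqref{assume_f7}. This is the step that requires the initial data to satisfy $\rho_0^i-\gamma\in L^1(\R^d)$, since it is the $L^1$-norm of $\rho^i-\gamma$ that is propagated by the entropy and $L^1$-contraction structure.

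For the second statement, where one only assumes $\rho_0\in\Ent_{\Phi,\gamma}$, the global $L^1$-contraction cannot be invoked directly since $\rho^i-\gamma$ need not be integrable. I would instead run the same doubled kinetic computation but with the test function $\varphi_R(x_1)\varphi_R(x_2)\zeta_{\beta,M}(\xi_1)\kappa_\delta(x_1-x_2)\eta_\epsilon(\xi_1-\xi_2)$, keeping $R$ fixed; because $\rho_0^1=\rho_0^2$, the initial data contribution vanishes identically and the only remaining terms are the sign-indefinite cutoff errors at $\partial B_R$. Those cutoff errors are bounded, after taking expectation and using Proposition~\ref{prop_entropy}, by a constant $C(R,T)$ depending only on the $\Ent_{\Phi,\gamma}$-norm of the data and on $\langle\nabla\cdot\xi^{\mathbf a}\rangle_1$. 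Since any spatial ball $B_{R_0}$ is contained in $B_R$ for $R$ arbitrarily large, and the flux at spatial infinity is trivial in expectation by the entropy dissipation bound, letting $R\to\infty$ gives $\E\sup_{t\in[0,T]}\int_{B_{R_0}}\abs{\rho^1-\rho^2}=0$ for every $R_0$, hence $\rho^1=\rho^2$ in $L^1_{\textrm{loc}}$, which together with the $\Ent_{\Phi,\gamma}$-continuity provided by Definition~\ref{d_sol} yields equality in $\Ent_{\Phi,\gamma}(\R^d)$ for every $t\in[0,T]$.
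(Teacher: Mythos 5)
Your proposal correctly identifies the architecture of the proof --- doubling of variables in kinetic form, regularization by mollification, velocity cutoffs at $0$ and at $\infty$, a spatial cutoff, and the decomposition of the resulting identity into measure, error, martingale, conservative, and cutoff contributions. The cancellation mechanism for the It\^o corrections and the stochastic integrals, the role of probabilistic stationarity of $\langle\xi^{\mathbf a}\rangle_1$, and the use of $\langle\nabla\cdot\xi^{\mathbf a}\rangle_1\in(L^1\cap L^\infty)(\R^d)$ are all the right ingredients.

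There is, however, a genuine gap in the order in which you remove the cutoffs. You propose to pass $\beta\to0$ and $M\to\infty$ \emph{before} $R\to\infty$, whereas the argument actually requires the spatial limit $R\to\infty$ to be taken \emph{first}, with the velocity cutoffs still in place. The reason is that the spatial cutoff error, after integration by parts, produces terms of the form $\int_0^t\int_{\R^d}\abs{\Phi_{\beta,M}(\rho^1)-\Phi_{\beta,M}(\rho^2)}\abs{\Delta\alpha_R}$ and $\int_0^t\int_{\R^d}\abs{\Sigma_{\beta,M}(\rho^1)-\Sigma_{\beta,M}(\rho^2)}\abs{\Delta\alpha_R}$, where $\Phi_{\beta,M}$ and $\Sigma_{\beta,M}$ are the truncated primitives of $\Phi'\varphi_\beta\zeta_M$ and $(\sigma')^2\varphi_\beta\zeta_M$. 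These functions are globally bounded \emph{because} the $\zeta_M$ cutoff is present; that boundedness is used directly in $d=1$, and in $d\geq3$ it enters through a local-Lipschitz bound with a constant $c_{\beta,M}$ before the Sobolev embedding and the entropy dissipation $\int_0^T\int_{\R^d}\abs{\nabla\Phi^{1/2}(\rho^i)}^2$ from \eqref{d_s2} can absorb the $\abs{\Delta\alpha_R}$ weight. If you pass $M\to\infty$ first, $\Phi_{\beta,M}$ degenerates into the unbounded $\Phi$, and the subsequent $R\to\infty$ limit is no longer controlled by $\abs{\Delta\alpha_R}\lesssim R^{-2}\mathbf{1}_{B_{2R}\setminus B_R}$ combined with dominated convergence or the Sobolev inequality. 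So the limits must be taken as $\ve\to0$, $\d\to0$, $R\to\infty$, $M\to\infty$, $\beta\to0$. The remaining steps you describe (using Proposition~\ref{prop_measure_unique} at $\beta\to0$ and the vanishing-at-infinity property at $M\to\infty$) are correct once they are performed \emph{after} the spatial limit.

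Two further remarks. First, the entropy dissipation bound used to kill the $\Delta\alpha_R$ error is the estimate \eqref{d_s2} built into Definition~\ref{d_sol}, not Proposition~\ref{prop_entropy}; the latter is the a priori estimate that constructs such solutions and is not invoked in the uniqueness argument. Second, you repeatedly argue ``after taking expectation,'' but the theorem is a $\P$-a.s.\ statement; in the actual proof the convergences are obtained $\P$-a.s.\ along deterministic subsequences after applying the Burkholder--Davis--Gundy inequality, rather than in expectation. Finally, your discussion of the second statement keeps $R$ fixed and asserts the cutoff errors are ``bounded by $C(R,T)$,'' which does not give a conclusion; the argument in the paper is simply that the same proof applies verbatim, with the initial-data difference equal to zero, and the limits $R,M\to\infty$, $\beta\to0$ on the left-hand side are justified by the monotone convergence theorem applied to $\int_\R\int_{\R^d}\abs{\chi^1_t-\chi^2_t}^2$, with no additional boundedness step needed.
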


\begin{proof}   Let $\chi^1$ and $\chi^2$ be the kinetic functions of $\rho^1$ and $\rho^2$ and for every $\ve,\d\in(0,1)$ and $i\in\{1,2\}$ let $\chi^{\ve,\d}_{t,i}(y,\eta)=(\chi^i(\cdot,\cdot,t)*\kappa^{\ve,\d})(y,\eta)$ be defined by
\[\kappa^{\ve,\d}(x,y,\xi,\eta) = \kappa^\ve(x-y)\kappa^\d(\xi-\eta),\]
for $\kappa^\ve$ and $\kappa^\d$ standard convolution kernels of scale $\ve$ and $\d$ on $\R^d$ and $\R$ respectively.  We will also define cutoff functions in the spatial and velocity variables.  For every $\b\in(0,1)$ let $\varphi_\beta$ satisfy $\varphi_\beta(\xi)=0$ if $\xi\in[0,\nicefrac{\beta}{2}]$, $\varphi_\beta(\xi)=1$ if $\xi\in[\beta,\infty)$, and $\varphi_\beta$ linearly interpolates between $0$ and $1$ on $[\nicefrac{\beta}{2},\beta]$, for every $M\in[2,\infty)$ let $\zeta_M(\xi)=1$ if $\xi\in[0,M]$, $\zeta_M(\xi)=0$ if $\xi\in[M+1,\infty)$, and let $\zeta_M$ linearly interpolate between $0$ and $1$ on $[M,M+1]$, and for every $R\in[1,\infty)$ let $\a_R$ be a smooth cutoff function of $B_R$ in $B_{2R}$ with $R\abs{\nabla^2\a_R}+\abs{\nabla\a_R}\leq\nicefrac{c}{R}$ for $c\in(0,\infty)$ independent of $R\in[1,\infty)$.

The proof of uniqueness is based on differentiating the following equality.  Due to the fact that the kinetic functions $\chi^i$ are $\{0,1\}$-valued, we have for every $t\in[0,T]$ that
\begin{align}\label{3_000}
\norm{\rho^1(x,t)-\rho^2(x,t)}_{L^1(\R^d)} & = \norm{\chi^1_t(\xi,x)-\chi^2_t(\xi,x)}_{L^2(\R^d\times\R)}
\\ \nonumber &  = \int_\R\int_{\R^d}\chi^1_t(\xi,x)+\chi^2_t(\xi,x)-2\chi^1_t(\xi,x)\chi^2_t(\xi,x)\dx\dxi
\\ \nonumber & =\lim_{\beta,\ve,\d\rightarrow 0}\lim_{M,R\rightarrow\infty}\int_\R\int_{\R^d}\big(\chi^{\ve,\d}_{t,1}+\chi^{\ve,\d}_{t,2}-2\chi^{\ve,\d}_{t,1}\chi^{\ve,\d}_{t,2}\big)\varphi_\beta\zeta_M\a_R,
\end{align}
where it follows from Definition~\ref{d_sol} and the Kolmogorov continuity criterion (see, for example, Revuz and Yor \cite[Chapter~1, Theorem~2.1]{RevYor1999}) that for every $\ve,\d\in(0,1)$ there exists a subset of full probability such that, for every $i\in\{1,2\}$, $(y,\eta)\in\R^d\times(\nicefrac{\d}{2},\infty)$, and $t\in[0,T]$, the regularized kinetic functions $\chi^{\ve,\d}_{t,i}$ satisfy, for $\overline{\kappa}^{\ve,\d}_{s,i}(x,y,\eta)=\kappa^{\ve,\d}(x,y,\rho^i(x,s),\eta)$,
\begin{align}\label{3_6}
& \chi^{\ve,\d}_{s,i}(y,\eta)\rvert_{s=0}^t=\nabla_y\cdot \Big(\int_0^t(\Phi'(\rho^i)\nabla\rho^i *\overline{\kappa}^{\ve,\d}_{s,i})(y,\eta)\Big)+  \partial_\eta\Big(\int_0^t (\kappa^{\ve,\d}*\dd q^i)(y,\eta)\Big)
\\ \nonumber & \quad + \nabla_y\cdot\Big(\frac{\langle\xi^\mathbf{a}\rangle_1}{2}\int_0^t \sigma'(\rho^i)^2\nabla\rho^i*\overline{\kappa}^{\ve,\d}_{s,i}(y,\eta)\Big) - \partial_\eta\Big(\int_0^t\langle \nabla\cdot\xi^\mathbf{a}\rangle_1 \sigma(\rho^i)^2 *\overline{\kappa}^{\ve,\d}_{s,i}(y,\eta)\Big)
\\ \nonumber & \quad - \int_0^t(\overline{\kappa}^{\ve,\d}_{s,i}*\nabla\cdot\nu(\rho^i))(y,\eta) - \int_0^t(\kappa^{\ve,\d}_{s,i}*\nabla\cdot(\sigma(\rho^i)\cdot\dd\xi^{\mathbf{a}}))(y,\eta).
\end{align}\normalsize
We will first treat the analogues of the first two terms on the righthand side of \eqref{3_000}.  It follows $\P$-a.s.\ from \eqref{3_6} that, for every $\ve,\beta\in(0,1)$, $M\in\N$, $R\in(1,\infty)$, and $\d\in(0,\nicefrac{\beta}{4})$, for every $t\in[0,T]$ and $i\in\{1,2\}$,
\begin{equation}\label{3_8}
\int_\mathbb{R}\int_{\R^d}\chi^{\ve,\d}_{s,i}(y,\eta)\varphi_\beta(\eta)\zeta_M(\eta)\a_R(y)\dy\deta\rvert^t_{s=0} = I^{i,\textrm{cut}}_t+I^{i,\textrm{mart}}_t+I^{i,\textrm{cons}}_t
\end{equation}
for the cutoff term defined by
\begin{align*}
& I^{i,\textrm{cut}}_t =-\int_0^t\int_{\mathbb{R}}\int_{\R^d}(\kappa^{\ve,\d}*q^i)(y,\eta)\partial_\eta(\varphi_\beta(\eta)\zeta_M(\eta))\a_R(y)
 \\  & +\frac{1}{2}\int_0^t\int_\mathbb{R}\int_{\R^d}(\langle \nabla\cdot\xi^\mathbf{a}\rangle_1 \sigma(\rho^i)^2*\overline{\kappa}^{\ve,\d}_{s,i})(y,\eta)\partial_\eta(\varphi_\beta(\eta)\zeta_M(\eta))\a_R(y)
 \\ & -\int_0^T\int_\R\int_{\R^d}(\Phi'(\rho^i)\nabla\rho^i *\overline{\kappa}^{\ve,\d}_{s,i})(y,\eta)\cdot\nabla\a_R(y)\varphi_\beta(\eta)\zeta_M(\eta)
 \\ & -\frac{\langle\xi^\mathbf{a}\rangle_1}{2}\int_0^t\int_\R\int_{\R^d}(\sigma'(\rho^i)^2\nabla\rho^i*\overline{\kappa}^{\ve,\d}_{s,i})(y,\eta)\cdot\nabla\a_R(y)\varphi_\beta(\eta)\zeta_M(\eta),
 \end{align*}
and for the martingale and conservative terms defined by
\begin{align*} & I^{i,\textrm{mart}}_t = -\int_0^t\int_\mathbb{R}\int_{\R^d}(\overline{\kappa}^{\ve,\d}_{s,i}*\nabla\cdot(\sigma(\rho^i(s))\dd\xi^{\mathbf{a}}))(y,\eta)\varphi_\beta(\eta)\zeta_M(\eta)\a_R(y),
\\ & I^{i,\textrm{cons}}_t=-\int_0^t\int_\R\int_{\R^d}(\overline{\kappa}^{\ve,\d}_{s,i}*\nabla\cdot\nu(\rho))(y,\eta)\varphi_\beta(\eta)\zeta_M(\eta)\a_R(y),
\end{align*}
where we emphasize that the terms $I^{i,\textrm{cut}}_t$, $I^{i,\textrm{mart}}_t$, and $I^{i,\textrm{cons}}_t$ depend on $\ve,\d,\beta\in(0,1)$, $M\in\N$, and $R\in(1,\infty)$.

We will now treat the mixed term of \eqref{3_000}.  From \eqref{3_6} and the stochastic product rule we have $\P$-a.s.\ that, for every $t\in[0,T]$ and $(y,\eta)\in\R^d\times\mathbb{R}$,
\begin{align}\label{3_09}
\chi^{\ve,\d}_{s,1}(y,\eta)\chi^{\ve,\d}_{s,2}(y,\eta)\rvert_{s=0}^t & = \chi^{\ve,\d}_{s,2}(y,\eta)\dd \chi^{\ve,\d}_{s,1}(y,\eta)+\chi^{\ve,\d}_{s,1}(y,\eta)\dd \chi^{\ve,\d}_{s,2}(y,\eta)
\\ \nonumber & \quad + \dd\langle \chi^{\ve,\d}_2,\chi^{\ve,\d}_1\rangle_s.
\end{align}
It follows from \eqref{3_6} and the definition of $\varphi_\beta$ that, for every $\ve,\beta\in(0,1)$, $M\in\N$, $R\in(1,\infty)$, and $\d\in(0,\nicefrac{\beta}{4})$,
\begin{align}\label{3_0009}
& \int_0^t\int_\mathbb{R}\int_{\R^d}\chi^{\ve,\d}_{s,2}(y,\eta)\dd \chi^{\ve,\d}_{s,1}(y,\eta)\varphi_\beta(\eta)\zeta_M(\eta)\dy\deta\ds
\\ \nonumber & = I^{2,1,\textrm{err}}_t+I^{2,1,\textrm{meas}}_t+I^{2,1,\textrm{cut}}_t+I^{2,1,\textrm{mart}}_t+I^{2,1,\textrm{cons}}_t, \end{align}
where, after adding the second term of \eqref{3_000009} below and subtracting it in \eqref{3_0000009} below, for $\overline{\kappa}^\d_{i,s}(y,\eta)=\kappa^\d_1(\rho^i(y,s)-\eta)$, the error term is defined by
\begin{align}\label{3_000009}
& I^{2,1,\textrm{err}}_t  = -\int_0^t\int_{\mathbb{R}}\int_{\R^d}(\Phi'(\rho^1)\nabla\rho^1*\overline{\kappa}^{\ve,\d}_{s,1})\cdot(\nabla\rho^2*\overline{\kappa}^{\ve,\d}_{s,2})\varphi_\beta(\eta)\zeta_M(\eta)\a_R(y)
\\ \nonumber & \quad +\int_0^t\int_{\mathbb{R}}\int_{\R^d}((\Phi'(\rho^1))^\frac{1}{2}\nabla\rho^1*\overline{\kappa}^{\ve,\d}_{s,1})\cdot((\Phi'(\rho^2))^\frac{1}{2}\nabla\rho^2*\overline{\kappa}^{\ve,\d}_{s,2})\varphi_\beta(\eta)\zeta_M(\eta)\a_R(y)
\\ \nonumber & \quad - \frac{\langle\xi^\mathbf{a}\rangle_1}{2}\int_0^t\int_\mathbb{R}\int_{\R^d}(\sigma'(\rho^1)^2\nabla\rho^1*\overline{\kappa}^{\ve,\d}_{s,1})\cdot(\nabla\rho^2*\overline{\kappa}^{\ve,\d}_{s,2})\varphi_\beta(\eta)\zeta_M(\eta)\a_R(y)
\\ \nonumber & \quad - \frac{1}{2}\int_0^t\int_\mathbb{R}\int_{\R^d}(\langle \nabla\cdot\xi^\mathbf{a}\rangle_1  \sigma(\rho^1)^2*\overline{\kappa}^{\ve,\d}_{s,1})\overline{\kappa}^{\d}_{s,2}\varphi_\beta(\eta)\zeta_M(\eta)\a_R(y),
\end{align}
the measure term is
\begin{align}\label{3_0000009}
 & I^{2,1,\textrm{meas}}_t  = \int_0^t\int_{\mathbb{R}}\int_{\R^d}(\kappa^{\ve,\d}*q^1)\overline{\kappa}^{\d}_{s,2}\varphi_\beta(\eta)\zeta_M(\eta)\a_R(y)
\\ \nonumber & \quad - \int_0^t\int_{\mathbb{R}}\int_{\R^d}((\Phi'(\rho^1))^\frac{1}{2}\nabla\rho^1*\overline{\kappa}^{\ve,\d}_{s,1})\cdot((\Phi'(\rho^2))^\frac{1}{2}\nabla\rho^2*\overline{\kappa}^{\ve,\d}_{s,2})\varphi_\beta(\eta)\zeta_M(\eta)\a_R(y),
\end{align}
the cutoff term is
\begin{align*}
& I^{2,1,\textrm{cut}}_t =-\int_0^t\int_{\mathbb{R}}\int_{\R^d}(\kappa^{\ve,\d}*q^1)\chi^{\ve,\d}_{s,2}\partial_\eta(\varphi_\beta\zeta_M)\a_R
\\ & \quad +\frac{1}{2}\int_0^t\int_\mathbb{R}\int_{\R^d}(\langle \nabla\cdot\xi^\mathbf{a}\rangle_1 \sigma(\rho^1)^2*\kappa^{\ve,\d}_{s,1})\chi^{\ve,\d}_{s,2}\partial_\eta(\varphi_\beta\zeta_M)\a_R
 \\ & \quad -\int_0^T\int_\R\int_{\R^d}(\Phi'(\rho^1)\nabla\rho^1*\overline{\kappa}^{\ve,\d}_{s,1})\chi^{\ve,\d}_{s,2}\cdot\nabla\a_R\varphi_\beta\zeta_M
 \\ & \quad -\frac{\langle\xi^\mathbf{a}\rangle_1}{2}\int_0^t\int_\R\int_{\R^d}(\sigma'(\rho^1)^2\nabla\rho^1*\overline{\kappa}^{\ve,\d}_{s,1})\chi^{\ve,\d}_{s,2}\cdot\nabla\a_R\varphi_\beta\zeta_M,
 \end{align*}
the martingale term is $I^{2,1,\textrm{mart}}_t=-\int_0^t\int_\mathbb{R}\int_{\R^d}(\overline{\kappa}^{\ve,\d}_{s,1}*\nabla\cdot(\sigma(\rho^1)\dd\xi^{\mathbf{a}}))\chi^{\ve,\d}_{s,2}\varphi_\beta\zeta_M\a_R$, and the conservative term is $I^{2,1,\textrm{cons}}_t = -\int_0^t\int_\mathbb{R}\int_{\R^d}(\overline{\kappa}^{\ve,\d}_{s,1}*\nabla\cdot\nu(\rho^1))\chi^{\ve,\d}_{s,2}\varphi_\beta\zeta_M\a_R$.
The analogous formula holds for the second term on the righthand side of \eqref{3_09}.  For the final term of \eqref{3_09}, it follows from \eqref{3_6} and the definition of $\xi^{\mathbf{a}}$ that, summing over $k\in\N$,
\begin{align}\label{3_009}
 \int_0^t\int_\mathbb{R}\int_{\R^d}\dd\langle\chi^{\ve,\d}_1,\chi^{\ve,\d}_{s,2}\rangle_s(y,\eta)\varphi_\beta\zeta_M\a_R & = \int_0^t \int_\mathbb{R}\int_{\R^d}( f_k\nabla\sigma(\rho^1)*\overline{\kappa}^{\ve,\d}_{s,1})\cdot(f_k\nabla\sigma(\rho^2)*\overline{\kappa}^{\ve,\d}_{s,2})\varphi_\beta\zeta_M\a_R
\\ \nonumber  & \quad +  \int_0^t \int_\mathbb{R}\int_{\R^d}(\sigma(\rho^1)\nabla f_k*\overline{\kappa}^{\ve,\d}_{s,1})\cdot(\sigma(\rho^2)\nabla f_k*\overline{\kappa}^{\ve,\d}_{s,2})\varphi_\beta\zeta_M\a_R
\\ \nonumber  & \quad + \int_0^t \int_\mathbb{R}\int_{\R^d}(\nabla\sigma(\rho^1)f_k*\overline{\kappa}^{\ve,\d}_{s,1})\cdot(\sigma(\rho^2)\nabla f_k*\overline{\kappa}^{\ve,\d}_{s,2})\varphi_\beta\zeta_M\a_R
\\ \nonumber  & \quad +   \int_0^t \int_\mathbb{R}\int_{\R^d}(\sigma(\rho^1)\nabla f_k*\overline{\kappa}^{\ve,\d}_{s,1}))\cdot(f_k\nabla\sigma(\rho^2)*\overline{\kappa}^{\ve,\d}_{s,2})\varphi_\beta\zeta_M\a_R.
\end{align}
It follows from \eqref{3_09}, \eqref{3_0009}, and \eqref{3_009} that
\begin{align*}
& \int_\mathbb{R}\int_{\R^d}\chi^{\ve,\d}_{s,1}(y,\eta)\chi^{\ve,\d}_{s,2}(y,\eta)\varphi_\beta\zeta_M\a_R\dy\deta\rvert^t_{s=0}  = I^{\textrm{err}}_t+I^{\textrm{meas}}_t+I^{\textrm{mix},\textrm{cut}}_t+I^{\textrm{mix},\textrm{mart}}_t+I^{\textrm{mix},\textrm{cons}}_t,
\end{align*}
where the error terms \eqref{3_000009} and \eqref{3_009} combine to form,
\begin{align*}
& I^{\textrm{err}}_t = -\int_0^t\int_\mathbb{R}\int_{(\R^d)^3}\big((\Phi'(\rho^1))^\frac{1}{2}-(\Phi'(\rho^2))^\frac{1}{2}\big)^2\nabla\rho^1\cdot\nabla\rho^2\overline{\kappa}^{\ve,\d}_{s,1}\overline{\kappa}^{\ve,\d}_{s,2}\varphi_\beta\zeta_M\a_R
\\ \nonumber & - \frac{\langle\xi^\mathbf{a}\rangle_1}{2}\int_0^t\int_\mathbb{R}\int_{\R^d}\big((\sigma'(\rho^1)^2\nabla\rho^1*\overline{\kappa}^{\ve,\d}_{s,1})\cdot(\nabla\rho^2*\overline{\kappa}^{\ve,\d}_{s,2})+(\sigma'(\rho^2)^2\nabla\rho^2*\overline{\kappa}^{\ve,\d}_{s,2})\cdot(\nabla\rho^1*\overline{\kappa}^{\ve,\d}_{s,1})\big)\varphi_\beta\zeta_M\a_R
\\ \nonumber & - \frac{1}{2}\int_0^t\int_\mathbb{R}\int_{\R^d}(\langle \nabla\cdot\xi^\mathbf{a}\rangle \sigma(\rho^1)^2*\overline{\kappa}^{\ve,\d}_{s,1})\overline{\kappa}^{\d}_{s,2}\varphi_\beta\zeta_M\a_R - \frac{1}{2}\int_0^t\int_\mathbb{R}\int_{\R^d}(\langle \nabla\cdot\xi^\mathbf{a}\rangle \sigma(\rho^2)^2*\overline{\kappa}^{\ve,\d}_{s,2})\overline{\kappa}^{\d}_{s,1}\varphi_\beta\zeta_M\a_R
\\ \nonumber & + \int_0^t \int_\mathbb{R}\int_{\R^d}(f_k\nabla\sigma(\rho^1)*\overline{\kappa}^{\ve,\d}_{s,1})\cdot (f_k\nabla\sigma(\rho^2)*\overline{\kappa}^{\ve,\d}_{s,2})\varphi_\beta\zeta_M\a_R
\\ \nonumber  & +  \int_0^t \int_\mathbb{R}\int_{\R^d}(\sigma(\rho^1)\nabla f_k*\overline{\kappa}^{\ve,\d}_{s,1})\cdot(\sigma(\rho^2)\nabla f_k*\overline{\kappa}^{\ve,\d}_{s,2})\varphi_\beta\zeta_M\a_R
\\ \nonumber  & + \int_0^t \int_\mathbb{R}\int_{\R^d}(\nabla\sigma(\rho^1)f_k*\overline{\kappa}^{\ve,\d}_{s,1})\cdot(\sigma(\rho^2)\nabla f_k*\overline{\kappa}^{\ve,\d}_{s,2})\varphi_\beta\zeta_M\a_R
\\ \nonumber  & +   \int_0^t \int_\mathbb{R}\int_{\R^d}(\sigma(\rho^1)\nabla f_k*\overline{\kappa}^{\ve,\d}_{s,1}))\cdot (f_k\nabla\sigma(\rho^2)*\overline{\kappa}^{\ve,\d}_{s,2})\varphi_\beta\zeta_M\a_R,
\end{align*}
and where the measure terms \eqref{3_0000009} combine to form
\begin{align*}
I^{\textrm{meas}}_t & =  \int_0^t\int_{\mathbb{R}}\int_{\R^d}(\kappa^{\ve,\d}*q^1)\overline{\kappa}^{\d}_{s,2}\varphi_\beta\zeta_M\a_R +\int_0^t\int_{\mathbb{R}}\int_{\R^d}(\kappa^{\ve,\d}*q^2)\overline{\kappa}^{\d}_{s,1}\varphi_\beta\zeta_M\a_R
\\ \nonumber & \quad -2\int_0^t\int_{\mathbb{R}}\int_{(\R^d)^3}(\Phi'(\rho^1))^\frac{1}{2}(\Phi'(\rho^2))^\frac{1}{2}\nabla\rho^1\cdot\nabla\rho^2\overline{\kappa}^{\ve,\d}_{s,1}\overline{\kappa}^{\ve,\d}_{s,2}\varphi_\beta\zeta_M\a_R.
\end{align*}
For the cutoff, martingale, and conservative terms defined respectively by $I^{\textrm{cut}}_t= I^{1,\textrm{cut}}_t+I^{2,\textrm{cut}}_t-2(I^{2,1,\textrm{cut}}_t+I^{1,2,\textrm{cut}}_t)$, and similarly for $I^{\textrm{mart}}_t$ and $I^{\textrm{cons}}_t$, we have from \eqref{3_8} and \eqref{3_0009} that, $\P$-a.s.\ for every $t\in[0,T]$,
\begin{align}\label{3_21}
& \int_\mathbb{R}\int_{\R^d}\Big(\chi^{\ve,\d}_{s,1}+\chi^{\ve,\d}_{s,2}-2\chi^{\ve,\d}_{s,1}\chi^{\ve,\d}_{s,2}\Big)\varphi_\beta\zeta_M\rvert_{s=0}^t
\\ \nonumber & =-2I^{\textrm{err}}_t-2I^{\textrm{meas}}_t+I^{\textrm{mart}}_t+I^{\textrm{cut}}_t+I^{\textrm{cons}}_t.
\end{align}
We will handle the five terms on the righthand side of \eqref{3_21} separately.

\textbf{The measure term}.  It follows from the regularity property \eqref{2_5000} of the kinetic measures and H\"older's inequality that the measure term $\P$-a.s.\ satisfies, for every $t\in[0,T]$,
\begin{equation}\label{3_22} I^{\textrm{meas}}_t\geq 0.\end{equation}

\textbf{The error term}.  For the error term, a repetition of the analysis leading from \cite[Equation~(4.16)]{FG21} to \cite[Equation~(4.18)]{FG21} using the probabilistic stationarity of the noise proves that, after passing to the limit $\ve\rightarrow 0$, $\P$-a.s.\ for every $t\in[0,T]$,
\begin{align*}
 \lim_{\ve\rightarrow 0}I^{\textrm{err}}_t & = -\int_0^t\int_\mathbb{R}\int_{\R^d}\Big((\Phi'(\rho^1))^\frac{1}{2}-(\Phi'(\rho^2))^\frac{1}{2}\Big)^2\nabla\rho^1\cdot\nabla\rho^2\overline{\kappa}^{\d}_{s,1}\overline{\kappa}^{\d}_{s,2}\varphi_\beta\zeta_M\a_R
\\ \nonumber & \quad - \frac{1}{2}\int_0^t\int_\mathbb{R}\int_{\R^d}\langle \nabla\cdot\xi^\mathbf{a}\rangle_1 (\sigma(\rho^1)-\sigma(\rho^2))^2\overline{\kappa}^{\d}_{s,1}\overline{\kappa}^{\d}_{s,2}\varphi_\beta\zeta_M\a_R
\\ \nonumber & \quad - \frac{\langle\xi^\mathbf{a}\rangle_1}{2}\int_0^t\int_\mathbb{R}\int_{\R^d}(\sigma'(\rho^1)-\sigma'(\rho^2))^2\nabla\rho^1\cdot\nabla\rho^2\overline{\kappa}^{\d}_{s,1}\overline{\kappa}^{\d}_{s,2}\varphi_\beta\zeta_M\a_R.
\end{align*}
We therefore have using the local Lipschitz continuity of $\Phi'$, $\sigma$, and $\sigma'$ in Assumption~\ref{assume_1} that, for some $c\in(0,\infty)$ depending $\beta$, $M$, $\langle\xi^\mathbf{a}\rangle_1$, and $\langle \nabla\cdot\xi^\mathbf{a}\rangle_1 $,
\begin{align*}
 \lim_{\ve\rightarrow 0}I^{\textrm{err}}_t & \leq c\d\int_0^t\int_\R\int_{\R^d}\mathbf{1}_{\{0<\abs{\rho^1-\rho^2}<c\d\}}\abs{\nabla\rho^1}\abs{\nabla\rho^2}(\d\overline{\kappa}^{\d}_{s,1})\overline{\kappa}^{\d}_{s,2}\varphi_\beta\zeta_M\a_R
 \\ & \quad +c\d\int_0^t\int_\R\int_{\R^d}\mathbf{1}_{\{0<\abs{\rho^1-\rho^2}<c\d\}}(\d\overline{\kappa}^{\d}_{s,1})\overline{\kappa}^{\d}_{s,2}\varphi_\beta\zeta_M\a_R.
 \end{align*}
 It then follows from the boundedness of $\d\overline{\kappa}^\d_{s,i}$, the compact support of $\a_R$, the local regularity of the $\rho^i$ and the regularity of $\sigma$ in Assumption~\ref{assume_1}, and the dominated convergence theorem that
\begin{equation}\label{3_12}\limsup_{\d\rightarrow 0}\big(\limsup_{\ve\rightarrow 0}\abs{I^{\textrm{err}}_t}\big)=0,\end{equation}
which completes the analysis of the error term.

\textbf{The martingale term}.  For the martingale term, a repetition of the analysis leading from \cite[Equation~(4.19)]{FG21} to \cite[Equation~(4.23)]{FG21} proves that, $\P$-a.s.\ along a deterministic subsequence $\ve,\d\rightarrow 0$,
\begin{align}\label{3_987}
\lim_{\d,\ve\rightarrow 0}\big(I^\textrm{mart}_t\big)  & =  \int_0^t\int_{\R^d}\sgn(\rho^2-\rho^1)\varphi_\beta(\rho^1)\zeta_M(\rho^1)\a_R(y)\nabla\cdot(\sigma(\rho^1)\dd\xi^{\mathbf{a}})
\\ \nonumber & \quad - \int_0^t\int_{\R^d}\sgn(\rho^2-\rho^1)\varphi_\beta(\rho^2)\zeta_M(\rho^2)\a_R(y)\nabla\cdot(\sigma(\rho^2)\dd\xi^{\mathbf{a}}).
\end{align}
For every $\beta\in(0,1)$ and $M\in\N$ let $\Theta_{\beta,M}\colon[0,\infty)\rightarrow\R$ be the unique function that satisfies $\Theta_{\beta,M}(0)=0$ and $\Theta_{\beta,M}'(\xi)=\varphi_\beta(\xi)\zeta_M(\xi)\sigma'(\xi)$.  Returning to \eqref{3_987}, it follows that, along subsequences,
\begin{align*}
& \lim_{\d,\ve\rightarrow 0}\big(I^\textrm{mart}_t\big)  =  \int_0^t\int_{\R^d}\sgn(\rho^2-\rho^1)\a_R(y)\nabla\cdot\big(\big(\Theta_{\beta,M}(\rho^1,x)-\Theta_{\beta,M}(\rho^2,x)\big)\dd\xi^{\mathbf{a}}\big)
\\ &  \quad +\int_0^t\int_{\R^d}\sgn(\rho^2-\rho^1)\a_R(y)\Big(\varphi_\beta(\rho^1)\zeta_M(\rho^1)\sigma(\rho^1)-\Theta_{\beta,M}(\rho^1)\Big)\nabla\cdot\dd\xi^{\mathbf{a}}
\\ & \quad - \int_0^t\int_{\R^d}\sgn(\rho^2-\rho^1)\a_R(y)\Big(\varphi_\beta(\rho^2)\zeta_M(\rho^2)\sigma(\rho^2)-\Theta_{\beta,M}(\rho^2)\Big)\nabla\cdot\dd\xi^{\mathbf{a}}.
\end{align*}
It follows from the local regularity of the $\rho^i$, the Lipschitz continuity and boundedness of $\Theta_{\beta,M}$ and $\varphi_\beta\zeta_M\sigma$, the $L^1$-integrability of the $\rho^i$, and the Burkholder--Davis--Gundy inequality that, $\P$-a.s.\ for every $t\in[0,T]$, along a deterministic subsequence $R\rightarrow\infty$,
\begin{align}\label{3_988}
& \lim_{R\rightarrow\infty}\lim_{\d,\ve\rightarrow 0}\big(I^\textrm{mart}_t\big) =  \int_0^t\int_{\R^d}\sgn(\rho^2-\rho^1)\nabla\cdot\big(\big(\Theta_{\beta,M}(\rho^1,x)-\Theta_{\beta,M}(\rho^2,x)\big)\dd\xi^{\mathbf{a}}\big)
\\ \nonumber &  \quad +\int_0^t\int_{\R^d}\sgn(\rho^2-\rho^1)\big(\varphi_\beta(\rho^1)\zeta_M(\rho^1)\sigma(\rho^1,x)-\Theta_{\beta,M}(\rho^1,x)\big)\nabla\cdot\dd\xi^{\mathbf{a}}
\\ \nonumber & \quad - \int_0^t\int_{\R^d}\sgn(\rho^2-\rho^1)\big(\varphi_\beta(\rho^2)\zeta_M(\rho^2)\sigma(\rho^2,x)-\Theta_{\beta,M}(\rho^2,x)\big)\nabla\cdot\dd\xi^{\mathbf{a}}.
\end{align}
It follows from the global Lipschitz continuity of $\Theta_{\beta,M}$ in the $\xi$-variable that the first term on the righthand side of \eqref{3_988} satisfies
\[  \int_0^t\int_{\R^d}\sgn(\rho^2-\rho^1)\nabla\cdot\big(\big(\Theta_{\beta,M}(\rho^1)-\Theta_{\beta,M}(\rho^2)\big)\dd\xi^{\mathbf{a}}\big)=0.\]
For the second and third terms on the righthand side of \eqref{3_988}, it follows from the definitions and an integration by parts that, for $c\in(0,\infty)$ independent of $\beta$ and $M$,
\[\abs{\Theta_{\beta,M}(\rho^i)-\sigma(\rho^i)}\leq c\big(\sup_{\xi\in[0,\beta\wedge \rho^i]}\abs{\sigma(\xi)}+\abs{\sigma(\rho^i)}\mathbf{1}_{\{\rho^i>M\}}+\sup_{\xi\in[M,(M+1)\wedge\rho^i]}\abs{\sigma(\xi)}\mathbf{1}_{\{\rho^i>M\}}\big).\]
 It then follows from the $L^2$-integrability of the $\sigma(\rho^i)$, the continuity of $\sigma$ and $\sigma(0)=0$, and \eqref{assume_f1} that
\[\lim_{M\rightarrow\infty}\lim_{\beta\rightarrow 0}\varphi_\beta(\rho^i)\zeta_M(\rho^i)\sigma(\rho^i)=\lim_{M\rightarrow\infty}\lim_{\beta\rightarrow 0}\Theta_{\beta,M}(\rho^i)=\sigma(\rho^i)\;\;\textrm{strongly in}\;\;L^2_{\textrm{loc}}(\R^d\times[0,T]).\]
Therefore, it follows from the Burkholder--Davis--Gundy inequality that, $\P$-a.s.\ along a deterministic subsequence $M\rightarrow\infty$ and $\beta\rightarrow 0$, for every $R\in(0,\infty)$,
\[\lim_{\beta\rightarrow 0}\lim_{M\rightarrow\infty}\max_{t\in[0,T]}\abs{\int_0^t\int_{\R^d}\sgn(\rho^2-\rho^1)\a_R(y)\big(\varphi_\beta(\rho^1)\zeta_M(\rho^1)\sigma(\rho^1)-\Theta_{\beta,M}(\rho^1)\big)\a_R(y)\nabla\cdot\dd\xi^{\mathbf{a}}}=0,\]
and similarly for the remaining three terms.  We therefore have that
\begin{equation}\label{3_29} \lim_{\beta\rightarrow 0}\lim_{M\rightarrow\infty} \lim_{R\rightarrow\infty}\lim_{\d\rightarrow 0}\lim_{\ve\rightarrow 0} \big(\max_{t\in[0,T]}I^\textrm{mart}_t\big)=0,\end{equation}
which completes the analysis of the martingale term.

\textbf{The conservative term.}  A virtually identical analysis to that leading to \cite[Equation~(4.29)]{FG21} proves that the conservative term satisfies
\begin{equation}\label{3_29292929}  \lim_{R\rightarrow\infty}\lim_{\beta\rightarrow 0}\lim_{M\rightarrow\infty}\lim_{\d\rightarrow 0}\lim_{\ve\rightarrow 0} \big(\max_{t\in[0,T]}I^\textrm{cons}_t\big)=0,\end{equation}
where in this case we require only the $L^1_{\textrm{loc}}$-integrability of $\nu(\rho^i)$ to apply the dominated convergence theorem.  The $L^2_{\textrm{loc}}$-integrability of $\sigma(\rho^i)$ is used to apply the Burkholder--Davis--Gundy inequality to treat the stochastic integral.

\textbf{The cutoff term.}  For the cutoff term, the analysis leading to \eqref{3_987} proves that, for every $t\in[0,T]$, $\ve,\beta\in(0,1)$, $\d\in(0,\nicefrac{\beta}{4})$, $M\in\N$, and $R\in(1,\infty)$,
\begin{align}\label{cutoff_term}
& \lim_{\ve,\d\rightarrow 0} I^{\textrm{cut}}_t  \leq \int_0^t\int_{\mathbb{R}}\int_{\R^d}\partial_\eta(\varphi_\beta\zeta_M)\a_R\big(\dd q^1+\dd q^2\big)
 \\ \nonumber  & -\frac{1}{2}\int_0^t\int_\mathbb{R}\int_{\R^d}\sgn(\rho^2-\rho^1)\langle \nabla\cdot\xi^\mathbf{a}\rangle_1 \big(\sigma(\rho^1)^2\partial_\eta(\varphi_\beta\zeta_M)(\rho^1)-\sigma(\rho^2)^2\partial_\eta(\varphi_\beta\zeta_M)(\rho^2)\big)\a_R
 \\ \nonumber & +\int_0^t\int_\R\int_{\R^d}\sgn(\rho^2-\rho^1)\big(\varphi_\beta\zeta_M(\rho^1)\nabla\Phi(\rho^1)-\varphi_\beta\zeta_M(\rho^2)\nabla\Phi(\rho^2)\big)\cdot\nabla\a_R
 \\ \nonumber & +\frac{\langle\xi^\mathbf{a}\rangle_1}{2}\int_0^t\int_\R\int_{\R^d}\sgn(\rho^2-\rho^1)\big(\sigma'(\rho^1)^2\varphi_\beta\zeta_M(\rho^1)\nabla\rho^1-\sigma'(\rho^2)^2\varphi_\beta\zeta_M(\rho^2)\nabla\rho^2\big)\cdot\nabla\a_R.
\end{align}
Therefore, for $\Phi_{\beta,M}'(\xi)=\Phi'(\xi)\varphi_\beta(\xi)\zeta_M(\xi)$ with $\Phi_{\beta,M}(0)=0$ and for $\Sigma_{\beta,M}'(\xi)=\sigma'(\xi)^2\varphi_\beta(\xi)\zeta_M(\xi)$ with $\Sigma_{\beta,M}(0)=0$ we have that, for some $c\in(0,\infty)$,
\begin{align*}
 \limsup_{\ve,\d\rightarrow 0} I^{\textrm{cut}}_t  & \leq c\int_0^t\int_{\mathbb{R}}\int_{\R^d}\Big(\frac{1}{\xi}\mathbf{1}_{\{\nicefrac{\beta}{2}<\xi<\beta\}}+\mathbf{1}_{\{M<\xi<M+1\}}\Big)\a_R\big(\dd q^1+\dd q^2\big)
  \\  & \quad  -\frac{1}{2}\int_0^t\int_\mathbb{R}\int_{\R^d}\sgn(\rho^2-\rho^1)\langle \nabla\cdot\xi^\mathbf{a}\rangle_1 \big(\sigma(\rho^1)^2\partial_\eta(\varphi_\beta\zeta_M)(\rho^1)-\sigma(\rho^2)^2\partial_\eta(\varphi_\beta\zeta_M)(\rho^2)\big)\a_R
  \\ & \quad -\int_0^t\int_{\R^d}\abs{\Phi_{\beta,M}(\rho^1)-\Phi_{\beta,M}(\rho^2)}\Delta\alpha_R -\frac{\langle\xi^\mathbf{a}\rangle_1}{2}\int_0^t\int_{\R^d}\abs{\Sigma_{\beta,M}(\rho^1)-\Sigma_{\beta,M}(\rho^2)}\Delta\alpha_R.
\end{align*}
We will first pass to the limit $R\rightarrow\infty$, for which it is necessary to consider separately the cases $d=1$, $d=2$, and $d\geq 3$.  If $d=1$, it follows immediately from the boundedness of $\Phi_{\beta,M}$ and $\Sigma_{\beta,M}$---which follows from their definitions---and $\abs{\Delta\alpha_R}\leq R^{-2}\mathbf{1}_{B_{2R}\setminus B_R}$ that
\begin{equation}\label{3_20}\lim_{R\rightarrow\infty}\Big(\int_0^t\int_{\R^d}\abs{\Phi_{\beta,M}(\rho^1)-\Phi_{\beta,M}(\rho^2)}\abs{\Delta\alpha_R}+\frac{\langle\xi^\mathbf{a}\rangle_1}{2}\int_0^t\int_{\R^d}\abs{\Sigma_{\beta,M}(\rho^1)-\Sigma_{\beta,M}(\rho^2)}\abs{\Delta\alpha_R}\Big)=0.\end{equation}
If $d=2$, due to the nondegeneracy and local $\C^1$-regularity of $\Phi$ in Assumption~\ref{assume_1}, for every $\eta\in(0,1)$ there exists $c_\eta\in(0,\infty)$ depending on $\eta$ such that
\begin{equation}\label{assume_f30} \abs{\rho^i-\gamma}\mathbf{1}_{\{\abs{\rho_i-\gamma}\geq\eta\}}\leq c_\eta\Psi_{\Phi,\gamma}(\rho^i).\end{equation}
Therefore, it follows from the entropy estimate that for every $\eta\in(0,1)$ there exists $c_\eta\in(0,\infty)$ depending on $\eta$ such that
\[\sup_{t\in[0,T]}\norm{(\rho-\gamma)\mathbf{1}_{\{\abs{\rho_i-\gamma}\geq\eta\}}}_{L^1(\R^d)}\leq c_\eta.\]
In $d=2$, therefore, using the local $\C^1$-regularity of $\Phi$ and $\sigma$ in Assumption~\ref{assume_1}, the bound $\abs{\Delta\alpha_R}\leq R^{-2}\mathbf{1}_{B_{2R}\setminus B_R}$, and the dominated convergence theorem, for every $\eta\in(0,1)$,
\[\limsup_{R\rightarrow\infty}\Big(\int_0^t\int_{\R^d}\abs{\Phi_{\beta,M}(\rho^1)-\Phi_{\beta,M}(\rho^2)}\abs{\Delta\alpha_R}+\frac{\langle\xi^\mathbf{a}\rangle_1}{2}\int_0^t\int_{\R^d}\abs{\Sigma_{\beta,M}(\rho^1)-\Sigma_{\beta,M}(\rho^2)}\abs{\Delta\alpha_R}\Big)\leq \eta,\]
from which we reach the conclusion of \eqref{3_20} in $d=2$ by taking $\eta\rightarrow 0$.  It remains to treat the case $d\geq 3$.  We first observe using the local $\C^1$-regularity of $\Phi$ and $\sigma$ in Assumption~\ref{assume_1} and the local nondegeneracy of $\Phi$ in \eqref{assume_f10} that there exists $c_{\beta,M}\in(0,\infty)$ such that
\[\abs{\Phi_{\beta,M}(\rho^i)-\Phi_{\beta,M}(\gamma)}+\abs{\Sigma_{\beta,M}(\rho^i)-\Sigma_{\beta,M}(\gamma)}\leq c_{\beta,M}\big(\abs{\Phi^\frac{1}{2}(\rho^i)-\Phi^\frac{1}{2}(\gamma)}^\frac{1}{2}+\abs{\rho^i-\gamma}\mathbf{1}_{\{\abs{\rho^i-\gamma}\geq \frac{\gamma}{2}\}}\big).\]
Similarly to the case $d=2$, since the $\abs{\rho^i-\gamma}\mathbf{1}_{\{\abs{\rho^i-\gamma}\geq \frac{\gamma}{2}\}}$ are $L^\infty_tL^1_x$-integrable it follows from the dominated convergence theorem and H\"older's inequality that, for $\nicefrac{1}{2_*}=\nicefrac{1}{2}-\nicefrac{1}{d}$,
\begin{align*}
& \limsup_{R\rightarrow\infty}\Big(\int_0^t\int_{\R^d}\abs{\Phi_{\beta,M}(\rho^1)-\Phi_{\beta,M}(\rho^2)}\abs{\Delta\alpha_R}+\frac{\langle\xi^\mathbf{a}\rangle_1}{2}\int_0^t\int_{\R^d}\abs{\Sigma_{\beta,M}(\rho^1)-\Sigma_{\beta,M}(\rho^2)}\abs{\Delta\alpha_R}\Big)
\\ & \leq \sum_{i=1}^2 c_{\beta,M}\int_0^t\int_{\R^d}\abs{\Phi^\frac{1}{2}(\rho^i)-\Phi^\frac{1}{2}(\gamma)}^\frac{1}{2}\abs{\Delta\a_R}
\\ & \leq \sum_{i=1}^2 \tilde{c}_{\beta,M}\int_0^t\Big(\int_{B_{2R}\setminus B_R}\abs{\Phi^\frac{1}{2}(\rho^i)-\Phi^\frac{1}{2}(\gamma)}^{2_*}\Big)^\frac{2}{2_*}\Big(\int_{\R^d}\abs{\Delta\a_R}^\frac{d}{2}\Big)^\frac{2}{d}.
\end{align*}
Since it follows from the Sobolev embedding theorem, which can be applied owing to the density of functions of the form $\gamma+\psi$ for $\psi\in\C^\infty_c(\R^d)$ in $\Ent_{\Phi,\gamma}(\R^d)$, that
\[\int_0^T\Big(\int_{\R^d}\abs{\Phi^\frac{1}{2}(\rho^i)-\Phi^\frac{1}{2}(\gamma)}^{2_*}\Big)^\frac{2}{2_*}\leq c\int_0^T\int_{\R^d}\abs{\nabla\Phi^\frac{1}{2}(\rho^i)}^2,\]
the conclusion of \eqref{3_20} for $d\geq 3$ follows from the entropy estimate, $\abs{\Delta\alpha_R}\leq R^{-2}\mathbf{1}_{B_{2R}\setminus B_R}$, and the dominated convergence theorem.

We therefore have that, after passing to the limit $R\rightarrow\infty$, $\P$-a.s.\ for every $t\in[0,T]$,
\begin{align*}
& \limsup_{R\rightarrow\infty}\limsup_{\ve,\d\rightarrow 0} I^{\textrm{cut}}_t   \leq \int_0^t\int_{\mathbb{R}}\partial_\eta(\varphi_\beta(\eta)\zeta_M(\eta))\big(\dd q^1+\dd q^2\big)
 \\  & \quad +\frac{1}{2}\int_0^t\int_{\R^d}\langle \nabla\cdot\xi^\mathbf{a}\rangle_1 \sgn(\rho^2-\rho^1)\big(\sigma^2(\rho^1)\partial_\eta(\varphi_\beta\zeta_M)(\rho^1)-\sigma^2(\rho^2)\partial_\eta(\varphi_\beta\zeta_M)(\rho^2)\big),
 \end{align*}
 and, therefore, using the definitions of the cutoff functions, the boundedness of $\langle \nabla\cdot\xi^\mathbf{a}\rangle_1 $, and the assumption that $\sigma^2(\xi)\leq c\xi$ for $\xi\in(0,1]$, for some $c\in(0,\infty)$,
\begin{align*}
& \limsup_{R\rightarrow\infty}\limsup_{\ve,\d\rightarrow 0} I^{\textrm{cut}}_t  \leq c\sum_{i=1}^2\big(\beta^{-1}q^i(\R^d\times[\nicefrac{\beta}{2},\beta]\times[0,T])+q^i(\R^d\times[M,M+1]\times[0,T])\big)
\\ & \quad +c\sum_{i=1}^2\int_0^t\int_{\R^d}\langle \nabla\cdot\xi^\mathbf{a}\rangle_1 \mathbf{1}_{\{\nicefrac{\beta}{2}\leq \rho^i\leq \beta\}} +\langle \nabla\cdot\xi^\mathbf{a}\rangle_1 (\sigma(\rho^i)-\sigma(\gamma))^2\mathbf{1}_{\{M\leq \rho^i\leq M+1\}}
\\ & \quad +c\sum_{i=1}^2\int_0^t\int_{\R^d}\langle \nabla\cdot\xi^\mathbf{a}\rangle_1 \sigma(\gamma)^2\mathbf{1}_{\{M\leq \rho^i\leq M+1\}}.
\end{align*}
It is then a consequence of $\langle \nabla\cdot\xi^\mathbf{a}\rangle_1 \in (L^1\cap L^\infty)(\R^d)$, Proposition~\ref{prop_measure_unique}, the vanishing of the measures at infinity in Definition~\ref{d_sol}, the integrability of the $(\rho^i-\gamma)\mathbf{1}_{\{\abs{\rho^i-\gamma}\geq\nicefrac{\gamma}{2}\}}$, and the $L^2$-integrability of $(\sigma(\rho^i)-\sigma(\gamma))$ that the righthand side vanishes along subsequences $M\rightarrow\infty$ and $\beta\rightarrow 0$.  We therefore have that, along subsequences,
\begin{equation}\label{3_18}\lim_{\beta\rightarrow 0}\lim_{M\rightarrow\infty} \lim_{R\rightarrow\infty}\lim_{\d\rightarrow 0}\lim_{\ve\rightarrow 0} \big(\max_{t\in[0,T]}I^\textrm{cut}_t\big)=0,\end{equation}
which completes the analysis of the cutoff term.

\textbf{Conclusion.} Properties of the kinetic function and estimates \eqref{3_21}, \eqref{3_22}, \eqref{3_12}, \eqref{3_29}, \eqref{3_29292929}, and \eqref{3_18} prove that there $\P$-a.s.\ exist random subsequences $\ve,\d,\beta\rightarrow 0$ and $R,M\rightarrow\infty$ such that, for every $t\in[0,T]$,
\begin{align*}
& \int_\mathbb{R}\int_{\R^d}\abs{\chi^1_s-\chi^2_s}^2\rvert_{s=0}^{s=t}
\\ & \leq \lim_{\beta\rightarrow 0}\lim_{M\rightarrow\infty}\lim_{R\rightarrow\infty}\lim_{\d\rightarrow 0}\lim_{\ve\rightarrow 0}\Big(-2I^{\textrm{err}}_t-2I^{\textrm{meas}}_t+I^{\textrm{mart}}_t+I^{\textrm{cut}}_t+I^{\textrm{cons}}_t\Big)=0.
\end{align*}
Properties of the kinetic function then prove that, for every $t\in[0,T]$,
\begin{align*}
\int_{\R^d}\abs{\rho^1(\cdot,t)-\rho^2(\cdot,t)} & =\int_\mathbb{R}\int_{\R^d}\abs{\chi^1_t-\chi^2_t}^2\leq \int_\mathbb{R}\int_{\R^d}\abs{\overline{\chi}(\rho^1_0)-\overline{\chi}(\rho^2_0)}^2=\int_{\R^d}\abs{\rho^1_0-\rho^2_0},
\end{align*}
where here we are using the fact that $\rho_0^1-\rho_0^2\in L^1(\R^d)$ because $\rho_0^i\in L^1_\gamma(\R^d)$, which completes the proof of the first statement. The proof of the second statement is identical, with the exception that if $\rho_0^1=\rho_0^2$ then in the final step we have that, $\P$-a.s.\ for every $t\in[0,T]$,
\[\int_\mathbb{R}\int_{\R^d}\abs{\chi^1_s-\chi^2_s}^2 \leq \lim_{\beta\rightarrow 0}\lim_{M\rightarrow\infty}\lim_{R\rightarrow\infty}\lim_{\d\rightarrow 0}\lim_{\ve\rightarrow 0}\Big(-2I^{\textrm{err}}_t-2I^{\textrm{meas}}_t+I^{\textrm{mart}}_t+I^{\textrm{cut}}_t+I^{\textrm{cons}}_t\Big)=0,\]
where in both cases on the lefthand side of the inequality the limits $R,M\rightarrow\infty$ and $\beta\rightarrow 0$ are justified using the monotone convergence theorem.  We therefore conclude that, in the case $\rho^1_0=\rho^2_0\in L^1(\O;\Ent_{\Phi,\gamma}(\R^d))$, any two stochastic kinetic solutions $\P$-a.s.\ satisfy $\rho^1=\rho^2\in\Ent_{\Phi,\gamma}(\R^d)$ for every $t\in[0,T]$, which completes the proof.  \end{proof}

\subsection{Existence of renormalized kinetic solutions}\label{section_exist}

In this section, we will begin by establishing the existence of solutions to the regularized equation
\begin{equation}\label{ap_1}\partial_t\rho = \Delta\Phi(\rho)+\eta\Delta\rho - \nabla\cdot(\sigma(\rho)\dd\xi^\mathbf{a}+\nu(\rho))+\frac{\langle\xi^\mathbf{a}\rangle_1}{2}\nabla\cdot(\sigma'(\rho)^2\nabla\rho),\end{equation}
for smooth and bounded nonlinearities $\Phi$, $\sigma$, and $\nu$, for $\eta\in(0,1)$, for $\rho_0\in L^2(\O;(\Ent_{\Phi,\gamma}\cap L^2_\gamma)(\R^d))$, for noise satisfying Assumption~\ref{a_random}, and for coefficients satisfying the following assumption.

\begin{assumption}\label{as_compact}  Let $\Phi,\sigma\in\textrm{C}([0,\infty))\cap\textrm{C}^1_{\textrm{loc}}((0,\infty))$ and $\nu\in \C([0,\infty))^d\cap\C^1((0,\infty))^d$ satisfy $\Phi(0)=\sigma(0)=\nu(0)=0$ and $\Phi'(\xi)>0$ for every $\xi\in(0,\infty)$.  Assume furthermore that $\Phi$, $\sigma$, and $\nu$ satisfy the following six properties.

\begin{enumerate}[(i)]
\item There exists $p\in[1,\infty)$ and $c\in(0,\infty)$ such that
\begin{equation}\label{assume_f20}\Phi(\xi)\leq c(1+\xi^p)\;\;\textrm{for every}\;\;\xi\in[0,\infty).\end{equation}
\item We have that $\log(\Phi)$ is locally integrable on $[0,\infty)$.
\item There exists $m\in[1,\infty)$ and $c_1,c_2\in[0,\infty)$ such that, for every $\xi\in[0,\infty)$,
\[\abs{\Phi^\frac{1}{2}(\xi)-\Phi^\frac{1}{2}((\nicefrac{3\gamma}{2}\wedge \xi)\vee \nicefrac{\gamma}{2})}\leq c_1\abs{\xi-((\nicefrac{3\gamma}{2}\wedge \xi)\vee\nicefrac{\gamma}{2})}^\frac{m}{2}+c_2\mathbf{1}_{\{\abs{\xi-\gamma}\geq\nicefrac{\gamma}{2}\}}.\]
\item Either there exists $c\in(0,\infty)$ and $\theta\in[0,\nicefrac{1}{2}]$ such that
\[\frac{\Phi'(\xi)}{\Phi^\frac{1}{2}(\xi)}\leq c\xi^{-\theta}\;\;\textrm{for every}\;\;\xi\in(0,\infty),\]
or there exists $c\in(0,\infty)$ and $q\in[1,\infty)$ such that
\[\abs{\xi-\xi'}^q\leq c\abs{\Phi^\frac{1}{2}(\xi)-\Phi^\frac{1}{2}(\xi')}^2\;\;\textrm{for every}\;\;\xi,\xi'\in[0,\infty).\]
\item There exists $c\in(0,\infty)$ such that
\[\abs{\sigma(\xi)}\leq c\Phi^\frac{1}{2}(\xi)\;\;\textrm{and}\;\;\abs{\nu(\xi)}\leq c\Phi(\xi)\;\;\textrm{for every}\;\;\xi\in[0,\infty).\]
\item There exists $c\in(0,\infty)$ and $q\in[0,2)$ such that
\[\Phi'(\xi)\leq c\big(1+(\Phi^\frac{1}{2}(\xi))^q\big)\;\;\textrm{for every $\xi\in[0,\infty)$}.\]
\end{enumerate}
\end{assumption}

\begin{definition}\label{def_sol_smooth}  Let $\xi^\mathbf{a}$ satisfy Assumption~\ref{a_random}, let $\Phi$, $\sigma$, and $\nu$ be smooth, $\C^2$-bounded, and satisfy Assumption~\ref{as_compact}, and let $\rho_0\in L^2(\O;(L^2_\gamma\cap\Ent_{\Phi,\gamma})(\R^d))$ be $\F_0$-measurable.  A solution of \eqref{ap_1} is an $\F_t$-adapted, continuous $L^2_\gamma$-valued process $\rho\in L^2(\O;L^2([0,T];H^1_\gamma(\R^d)))$ that satisfies, for every $t\in[0,T]$ and $\psi\in\C^\infty_c(\R^d)$,
\begin{align*}
\int_{\R^d}\rho(x,s)\psi(x)\Big|_{s=0}^{s=t} & =-\int_0^t\int_{\R^d}\Phi'(\rho)\nabla\phi\cdot\nabla\psi-\eta\int_0^t\int_{\R^d}\nabla\rho\cdot\nabla\psi+\int_0^t\int_{\R^d}\sigma(\rho)\nabla\psi\cdot\dd\xi^\mathbf{a}
\\ & \quad +\int_0^t\int_{\R^d}\nu(\rho)\cdot\nabla\psi-\frac{\langle\xi^\mathbf{a}\rangle_1}{2}\int_0^t\int_{\R^d}\sigma'(\rho)^2\nabla\rho\cdot\nabla\psi.
\end{align*}
\end{definition}

\begin{prop}\label{prop_entropy}  Let $\xi^\mathbf{a}$ satisfy Assumption~\ref{a_random}, let $\Phi$, $\sigma$, and $\nu$ be smooth, $\C^2$-bounded, and satisfy Assumption~\ref{as_compact}, and let $\rho_0\in L^2(\O;(L^2_\gamma\cap\Ent_{\Phi,\gamma})(\R^d))$ be $\F_0$-measurable.  Then, there exists a unique solution of \eqref{ap_1} in the sense of Definition~\ref{def_sol_smooth}.  \end{prop}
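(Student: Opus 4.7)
The plan is to construct the solution by a Galerkin (or viscous) approximation and to pass to the limit using three a priori bounds: an $L^2_\gamma$ energy estimate, an $H^1_\gamma$ bound from the viscosity $\eta\Delta$, and a relative entropy estimate. Because $\Phi$, $\sigma$, $\nu$ are smooth, $\C^2$-bounded, and the term $\eta\Delta\rho$ supplies a uniformly coercive viscosity, equation \eqref{ap_1} is a quasilinear parabolic SPDE of standard type once one perturbs around the background $\gamma$ and works with the centered variable $\rho-\gamma\in H^1(\R^d)$. Projecting onto finite-dimensional subspaces $H_n\subset H^1(\R^d)$ reduces the SPDE to an SDE with locally Lipschitz coefficients, which admits a unique global solution once the a priori estimates below are shown to be uniform in $n$.

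The three a priori estimates are derived as follows. Applying It\^o's formula to $\norm{\rho-\gamma}_{L^2}^2$ yields the energy estimate: the $\eta\Delta$-term gives the dissipation $-2\eta\int_0^t\norm{\nabla\rho}_{L^2}^2$, the $\Delta\Phi(\rho)$-term a further nonpositive contribution by monotonicity of $\Phi$, and the Stratonovich--It\^o correction $\frac{\langle\xi^\mathbf{a}\rangle_1}{2}\nabla\cdot(\sigma'(\rho)^2\nabla\rho)$ combines with the quadratic variation $\sum_k\int\sigma(\rho)^2 f_k^2$ of the noise term into an expression of size $\langle\xi^\mathbf{a}\rangle_1\norm{\sigma(\rho)}_{L^2}^2$; Burkholder--Davis--Gundy and Gronwall close the estimate using Assumption~\ref{as_compact}(v) and the $L^2_\gamma$ initial data. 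The $H^1_\gamma$ bound follows from the dissipative term. The relative entropy estimate is obtained by applying It\^o's formula to $\int_{\R^d}\Psi_{\Phi,\gamma}(\rho)$: since $\Psi_{\Phi,\gamma}''(\xi)=\Phi'(\xi)/\Phi(\xi)$, the $\Delta\Phi(\rho)$-term produces precisely the entropy dissipation $4\int_0^t\int\abs{\nabla\Phi^{1/2}(\rho)}^2$, while the noise together with its It\^o correction contribute a remainder proportional to $\int_0^t\int \langle\nabla\cdot\xi^\mathbf{a}\rangle_1\sigma(\rho)^2\Phi'(\rho)/\Phi(\rho)$. With these estimates in hand, tightness of the Galerkin scheme in $\C([0,T];L^2_{\textrm{loc}}(\R^d))\cap L^2([0,T];H^1_{\textrm{loc}}(\R^d))$ follows from an Aubin--Lions argument, and a Skorohod representation combined with a martingale-representation identification yields the limit as a solution in the sense of Definition~\ref{def_sol_smooth}. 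Uniqueness is then established by applying It\^o's formula to $\norm{\rho^1-\rho^2}_{L^2}^2$ and using monotonicity of $\Phi$, the $\eta\Delta$-viscosity to absorb the Lipschitz contributions of $\sigma$ and $\nu$ via Young's inequality, and cancellation between the Stratonovich--It\^o correction and the quadratic variation of the noise difference, concluding by Gronwall.

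The principal obstacle is closing the relative entropy estimate at spatial infinity. The stochastic integral arising from testing $\sigma(\rho)\,\dd\xi^\mathbf{a}$ against $\Psi_{\Phi,\gamma}'(\rho)=\log(\Phi(\rho)/\Phi(\gamma))$ is not a priori absolutely convergent, and after It\^o expansion its quadratic variation involves $\sigma(\rho)^2\abs{\log(\Phi(\rho)/\Phi(\gamma))}^2\langle\nabla\cdot\xi^\mathbf{a}\rangle_1$ together with the lower-order divergence term $\langle\nabla\cdot\xi^\mathbf{a}\rangle_1\sigma(\rho)^2\Phi'(\rho)/\Phi(\rho)$. Near $\rho\simeq\gamma$ these are controlled by $\langle\nabla\cdot\xi^\mathbf{a}\rangle_1\in L^1(\R^d)$, but in the far region where $\abs{\rho-\gamma}$ is large one must exploit the Sobolev embedding applied to $\Phi^{1/2}(\rho)-\Phi^{1/2}(\gamma)$, justified by the dissipation $\int\abs{\nabla\Phi^{1/2}(\rho)}^2$ and Assumption~\ref{as_compact}(iii)--(v), with the case $d\geq 3$ using the critical exponent $2_*=\nicefrac{2d}{d-2}$ precisely as in the cutoff analysis in the proof of Theorem~\ref{thm_unique}. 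Combining this with $\langle\nabla\cdot\xi^\mathbf{a}\rangle_1\in L^\infty(\R^d)$ absorbs the remainder into the entropy dissipation and closes the estimate uniformly in the Galerkin parameter, which is the novel ingredient of the full-space argument.
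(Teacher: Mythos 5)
The paper disposes of this statement with a single sentence: the proof is ``based on standard techniques and details can be found in \cite[Section~5]{FG21},'' so there is no explicit argument in this paper to compare against. Your outline---Galerkin projection, a priori estimates uniform in the Galerkin parameter, Aubin--Lions compactness, Skorohod representation plus martingale identification, then a contraction estimate for uniqueness---is the expected shape of that cited argument, and most of the details you sketch (the $L^2_\gamma$ energy balance, the Stratonovich-to-It\^o cancellation leaving a remainder of order $\langle\nabla\cdot\xi^{\mathbf{a}}\rangle_1\,\sigma(\rho)^2$, the $H^1_\gamma$ bound from the viscous term) are on target.

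Two substantive remarks. First, you have conflated the content of Proposition~\ref{prop_entropy} with that of the subsequent, unlabeled proposition. Proposition~\ref{prop_entropy} asserts only existence and uniqueness of the regularized equation \eqref{ap_1}, whose solution lives in $L^2(\O;L^2([0,T];H^1_\gamma(\R^d)))$; the uniform viscosity $\eta\Delta$ gives the $H^1_\gamma$ control, which is all the compactness the Galerkin limit needs, so no relative entropy estimate is required here. The full-space entropy dissipation estimate, the Sobolev-embedding treatment of the Burkholder--Davis--Gundy term at spatial infinity, and the case distinction $d=1,2,\geq 3$ that you identify as the ``principal obstacle'' belong to the following proposition (the $\eta$-uniform a priori bound needed for Theorem~\ref{thm_exist}), not to the proof of this one; your last paragraph is in effect proving a different result.

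Second, the uniqueness step as written is incomplete. Testing $\Delta(\Phi(\rho^1)-\Phi(\rho^2))$ against $w=\rho^1-\rho^2$ in $L^2$ does not produce a sign: after integration by parts one obtains
\[-\int\nabla w\cdot\nabla\big(\Phi(\rho^1)-\Phi(\rho^2)\big)=-\int\big|\Phi'(\rho^1)^{\frac12}\nabla\rho^1-\Phi'(\rho^2)^{\frac12}\nabla\rho^2\big|^2+\int\big(\Phi'(\rho^1)^{\frac12}-\Phi'(\rho^2)^{\frac12}\big)^2\nabla\rho^1\cdot\nabla\rho^2,\]
and the second term is not signed and cannot be absorbed by $\eta\|\nabla w\|_{L^2}^2$ without $L^\infty$ control on $\nabla\rho^i$, which is not available. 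The clean contraction for porous-medium--type operators is in $H^{-1}$, where $-\langle(-\Delta)^{-1}w,\Delta(\Phi(\rho^1)-\Phi(\rho^2))\rangle=-\int w\,(\Phi(\rho^1)-\Phi(\rho^2))\leq 0$ follows directly from monotonicity, and the $\eta\Delta$ term yields $-\eta\|w\|_{L^2}^2$ to absorb the noise contributions after the Stratonovich--It\^o cancellation; alternatively one uses the kinetic $L^1$-contraction of Theorem~\ref{thm_unique}. Either replaces your $L^2$-Gronwall step; as currently stated that step would not close.
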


\begin{proof}  The proof is based on standard techniques and details can be found in \cite[Section~5]{FG21}.  \end{proof}

We will now establish stable a priori estimates for the solutions of \eqref{ap_1} with smooth coefficients and $\eta\in(0,1)$.  Based on these estimates, we will deduce the existence of solutions to \eqref{i_1} with coefficients satisfying Assumption~\ref{as_compact}.

\begin{prop}  Let $\xi^\mathbf{a}$ satisfy Assumption~\ref{a_random}, let $\Phi$, $\sigma$, and $\nu$ be smooth, $\C^2$-bounded, and satisfy Assumption~\ref{as_compact}, let $\rho_0\in L^2(\O;(L^2_\gamma\cap\Ent_{\Phi,\gamma})(\R^d))$ be $\F_0$-measurable, and let $\rho$ be a solution of \eqref{ap_1} in the sense of Definition~\ref{def_sol_smooth} with initial data $\rho_0$.  Then, there exists $c\in(0,\infty)$ such that, if $d=1$,
\begin{align*}
& \E\Big[\max_{t\in[0,T]}\int_{\R^d}\Psi_{\Phi,\gamma}(\rho(x,t)) +  \int_0^T\int_{\R^d}\abs{\nabla\Phi^\frac{1}{2}(\rho)}^2\big]
\\ &   \leq c\E \Big[\int_{\R^d}\Psi_{\Phi,\gamma}(\rho_0)+1+\langle\xi^\mathbf{a}\rangle_1+\norm{\langle \nabla\cdot\xi^\mathbf{a}\rangle_1 }_{L^1(\R^d)}+\norm{\langle \nabla\cdot\xi^\mathbf{a}\rangle_1 }_{L^1(\R^d)}^\frac{2}{2-q}\Big],
\end{align*}
and, if $d=2$, there exist $c,\theta\in(0,\infty)$ such that
\begin{align*}
& \E\Big[\max_{t\in[0,T]}\int_{\R^d}\Psi_{\Phi,\gamma}(\rho(x,t)) +  \int_0^t\int_{\R^d}\abs{\nabla\Phi^\frac{1}{2}(\rho)}^2\Big]
\\ \nonumber &   \leq  c\E\Big[\int_{\R^d}\Psi_{\Phi,\gamma}(\rho_0)+1+\langle\xi^\mathbf{a}\rangle_1^\frac{1}{2}A^{-\frac{1}{2}}+\langle\xi^\mathbf{a}\rangle_1A^{-1}+\norm{\langle \nabla\cdot\xi^\mathbf{a}\rangle_1 }_{L^1(\R^d)}+\norm{\langle \nabla\cdot\xi^\mathbf{a}\rangle_1 }_{L^\infty(\R^d)}^\theta\Big],
\end{align*}
and, if $d\geq 3$, there exist $c,\theta\in(0,\infty)$ such that
\begin{align*}
& \E\Big[\max_{t\in[0,T]}\int_{\R^d}\Psi_{\Phi,\gamma}(\rho(x,t)) +  \int_0^t\int_{\R^d}\abs{\nabla\Phi^\frac{1}{2}(\rho)}^2\big]
\\ &   \leq c\E \Big[\int_{\R^d}\Psi_{\Phi,\gamma}(\rho_0)+1+\langle\xi^\mathbf{a}\rangle_1A^{1-\frac{d+2}{2}}+\norm{\langle \nabla\cdot\xi^\mathbf{a}\rangle_1 }_{L^1(\R^d)}+\norm{\langle \nabla\cdot\xi^\mathbf{a}\rangle_1 }^\frac{2}{2-q}_{L^\frac{2}{2-q}(\R^d)}\Big].
\end{align*}
\end{prop}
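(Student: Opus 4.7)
The plan is to apply Itô's formula to the relative entropy functional $t\mapsto \int_{\R^d}\Psi_{\Phi,\gamma}(\rho(x,t))\,dx$, exploiting that $\Psi_{\Phi,\gamma}'(\xi)=\log(\Phi(\xi)/\Phi(\gamma))$ with $\Psi_{\Phi,\gamma}''(\xi)=\Phi'(\xi)/\Phi(\xi)$. The regularity $\rho\in L^2(\Omega;L^2([0,T];H^1_\gamma(\R^d)))$ from Definition~\ref{def_sol_smooth}, combined with the $\C^2$-boundedness assumed on the coefficients, makes the formal application of Itô's formula rigorous. Integrating $\Psi_{\Phi,\gamma}'(\rho)\Delta\Phi(\rho)$ by parts produces the principal dissipation $-4\int|\nabla\Phi^{1/2}(\rho)|^2$, which matches the left-hand side, while the viscous contribution $-\eta\int\Psi_{\Phi,\gamma}''(\rho)|\nabla\rho|^2\leq 0$ is simply discarded. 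The key structural cancellation is between the Stratonovich-to-Itô drift $\tfrac{\langle\xi^\mathbf{a}\rangle_1}{2}\nabla\cdot(\sigma'(\rho)^2\nabla\rho)$ and the portion of the Itô quadratic-variation correction generated by $\sigma'(\rho)f_k\cdot\nabla\rho$; the cross terms $\sum_k f_k\cdot\nabla f_k = \tfrac{1}{2}\nabla\sum_k f_k^2$ vanish because spatial stationarity forces $\sum_k f_k^2 = \langle\xi^\mathbf{a}\rangle_1$ to be constant.

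After this cancellation, three non-dissipative contributions remain: (i) the surviving Itô correction $\tfrac{1}{2}\int\tfrac{\Phi'(\rho)\sigma(\rho)^2}{\Phi(\rho)}\langle\nabla\cdot\xi^\mathbf{a}\rangle_1$, bounded via $\sigma^2\leq c\Phi$ and $\Phi'\leq c(1+\Phi^{q/2})$ with $q\in[0,2)$ by $c\int\langle\nabla\cdot\xi^\mathbf{a}\rangle_1 + c\int\Phi^{q/2}(\rho)\langle\nabla\cdot\xi^\mathbf{a}\rangle_1$; (ii) the conservative contribution $\int\Psi_{\Phi,\gamma}''(\rho)\nu(\rho)\cdot\nabla\rho$, which equals $\sum_i\partial_i W_i(\rho)$ for the antiderivatives $W_i(r)=\int_0^r\Psi_{\Phi,\gamma}''(s)\nu_i(s)\,ds$ and therefore vanishes upon integration, using $|\nu|\leq c\Phi$ and the $L^2_\gamma$-integrability of $\rho$ to handle the boundary behaviour at infinity; and (iii) the martingale $M_t=\sum_k\int_0^t\int\sigma(\rho)\Psi_{\Phi,\gamma}''(\rho)\nabla\rho\cdot g_k\,dB^k_s$, where $g_k$ are the spatial modes of $\xi^\mathbf{a}$.

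The main obstacle is the joint control of (i) and (iii), both of which couple the noise to the global structure of $\rho-\gamma$. For (iii), I would rewrite $\sigma(\rho)\Psi_{\Phi,\gamma}''(\rho)\nabla\rho=2(\sigma(\rho)/\Phi^{1/2}(\rho))\nabla\Phi^{1/2}(\rho)$, split $\sigma(\rho)=(\sigma(\rho)-\sigma(\gamma))+\sigma(\gamma)$, and apply Burkholder--Davis--Gundy: the variable piece produces a term $c\langle\xi^\mathbf{a}\rangle_1\int|\nabla\Phi^{1/2}(\rho)|^2$ that is absorbed by Young's inequality into the principal dissipation, while the constant-at-infinity piece, where $\sigma\approx\sigma(\gamma)$ and the noise degenerates to the spatially constant $\xi^a$, is where the Gaussian weight $\exp(-A|x|^2)$ from Assumption~\ref{a_random} enters; integrals of the form $\int e^{-A|x|^2}$ give the factors $A^{-d/2}$ that explain the dimensional $A$-powers in the statement. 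For (i), I would decompose $\Phi^{q/2}(\rho)=\Phi^{q/2}(\gamma)+(\Phi^{q/2}(\rho)-\Phi^{q/2}(\gamma))$ and exploit that the entropy dissipation puts $\Phi^{1/2}(\rho)-\Phi^{1/2}(\gamma)\in H^1(\R^d)$, whence Sobolev embedding yields $L^{2d/(d-2)}$ for $d\geq 3$, $L^p$ with $p<\infty$ for $d=2$, and $L^\infty$ for $d=1$. Combining these with Hölder against $\langle\nabla\cdot\xi^\mathbf{a}\rangle_1$ and Young's inequality with exponent $2/(2-q)$ closes the bound.

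The hardest point will be closing the circular dependence: the dissipation $\int|\nabla\Phi^{1/2}(\rho)|^2$ that one is trying to bound reappears inside the Sobolev control of both (iii) and (i). Calibrating the Young's inequality exponent precisely to $2/(2-q)$, matched to the growth $\Phi'\leq c(1+\Phi^{q/2})$ of Assumption~\ref{as_compact}, allows all such terms to be absorbed, producing the stated $\|\langle\nabla\cdot\xi^\mathbf{a}\rangle_1\|^{2/(2-q)}$ norms (in $L^1$ for $d=1$, in $L^\infty$ with an exponent $\theta$ for $d=2$, and in $L^{2/(2-q)}$ for $d\geq 3$) alongside the correct powers of $A$. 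The three cases are run separately to accommodate the distinct Sobolev regimes, and the stated bound emerges after passing to $\sup_{t\in[0,T]}$ and taking expectations, the last of which again uses BDG to absorb the martingale's supremum into the dissipation and into the right-hand side.
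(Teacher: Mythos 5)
Your outline captures the correct skeleton of the argument (It\^o on the entropy, cancellation of the Stratonovich-to-It\^o drift against the quadratic-variation term using stationarity, the surviving $\frac{1}{2}\int\langle\nabla\cdot\xi^{\mathbf{a}}\rangle_1\sigma(\rho)^2\Phi'(\rho)/\Phi(\rho)$, and a BDG--Sobolev treatment of the martingale that produces the $A$-dependent constants by dimension), but there are two genuine gaps in the details that would block a rigorous proof.

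First, you cannot apply It\^o's formula directly to $\Psi_{\Phi,\gamma}$ on the strength of the coefficients being $\C^2$-bounded, because the nonlinearity $\Psi_{\Phi,\gamma}$ itself is not: $\Psi_{\Phi,\gamma}'(\xi)=\log(\Phi(\xi)/\Phi(\gamma))\to-\infty$ and $\Psi_{\Phi,\gamma}''(\xi)=\Phi'(\xi)/\Phi(\xi)$ blows up as $\xi\to 0^+$ (for $\Phi(\xi)=\xi^m$ it is $m/\xi$), and the solution is only nonnegative. The paper resolves this by introducing the regularization $\Psi_{\Phi,\gamma,\delta}$ with $\Psi_{\Phi,\gamma,\delta}'(\xi)=\log((\Phi(\xi)+\delta)/\Phi(\gamma))$, deriving the estimate for each $\delta$, and passing $\delta\to 0$ by monotone convergence at the end; without this your formal computation has no rigorous foundation near $\{\rho=0\}$.

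Second, the decomposition you propose for the martingale term --- splitting $\sigma(\rho)=(\sigma(\rho)-\sigma(\gamma))+\sigma(\gamma)$ --- is not what makes this estimate go through. The relevant split is of the noise $\xi^{\mathbf{a}}$ into its two structural pieces: the convolved, effectively compactly-supported modes $\sqrt{\exp(-A|x|^2)}(f_k*\kappa^\alpha)$ (handled by Young's convolution inequality with no $A$-dependence) and the spatially constant modes $\sqrt{1-\exp(-A|x|^2)}\,a_k$. For the latter, the crucial step you omit is an integration by parts that moves the gradient off the solution, producing $\int(\Phi^{1/2,\delta}(\rho)-\Phi^{1/2,\delta}(\gamma))\,\nabla\sqrt{1-\exp(-A|x|^2)}$; only then can Sobolev embedding be applied to $\Phi^{1/2,\delta}(\rho)-\Phi^{1/2,\delta}(\gamma)$ (not to its gradient), and the $A$-powers arise from $\|\nabla\sqrt{1-\exp(-A|x|^2)}\|_{L^p}\sim A^{\frac{1}{2}-\frac{d}{2p}}$ rather than from $\int e^{-A|x|^2}\sim A^{-d/2}$ as you suggest. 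You also compress the $d=2$ case into ``$L^p$ with $p<\infty$'', but this is the critical Sobolev exponent; the paper must further decompose into a bounded piece and a growing piece, and close via an $L^1$--BMO interpolation that introduces the separate exponent $\theta$ appearing in the statement for $d=2$.
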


\begin{proof}  Let $\Psi_{\Phi,\gamma,\d}$ be the unique function satisfying $\Psi_{\Phi,\gamma,\d}(\gamma)=0$ and $\Psi_{\Phi,\gamma,\d}'(\xi) = \log(\nicefrac{(\Phi(\xi)+\d)}{\Phi(\gamma)})$.  It is then a consequence of localization argument and It\^o's formula (see, for example, Krylov~\cite{Kry2013}) that, for every $t\in[0,T]$,
\begin{align*}
& \int_{\R^d}\Psi_{\Phi,\gamma,\d}(\rho(x,t)) = \int_{\R^d}\Psi_{\Phi,\gamma,\d}(\rho_0(x))-\int_0^t\int_{\R^d}\frac{\Phi'(\rho)^2}{\d+\Phi(\rho)}\abs{\nabla\rho}^2-\eta\int_0^t\int_{\R^d}\frac{\Phi'(\rho)}{\d+\Phi(\rho)}\abs{\nabla\rho}^2
\\ & \quad + \int_0^t\int_{\R^d}\frac{\sigma(\rho)\Phi'(\rho)}{\d+\Phi(\rho)}\nabla\rho\cdot\dd\xi^{\mathbf{a}}+\int_0^t\int_{\R^d}\nu(\rho)\cdot\frac{\Phi'(\rho)}{\d+\Phi(\rho)}\nabla\rho +\frac{1}{2}\int_0^t\int_{\R^d}\langle \nabla\cdot\xi^\mathbf{a}\rangle_1 \frac{\sigma(\rho)^2\Phi'(\rho)}{\d+\Phi(\rho)}.
\end{align*}
Since for $\Theta_{\nu,\d}=(\Theta_{\nu,\d,i})_{i=1}^d$ satisfying $\Theta_{\nu,\d,i}(0)=0$ and $\Theta_{\nu,\d,i}'(\xi) = \frac{\nu_{n,i}(\xi)\Phi'(\xi)}{\d+\Phi(\xi)}$ we have that
\[\int_0^t\int_{\R^d}\nu(\rho)\cdot\frac{\Phi'(\rho)}{\d+\Phi(\rho)}\nabla\rho = \int_0^T\int_{\R^d}\nabla\cdot\Theta_{\nu,\d}(\rho) = 0,\]
it follows $\P$-a.s.\ from the assumption $\abs{\sigma(\xi)}\leq c\Phi^\frac{1}{2}(\xi)$ that, for $c\in(0,\infty)$ independent of $\d\in(0,1)$,
\begin{align}\label{4_40}
& \max_{t\in[0,T]}\int_{\R^d}\Psi_{\Phi,\gamma,\d}(\rho(x,t)) +  \int_0^T\int_{\R^d}\frac{\Phi'(\rho)^2}{\d+\Phi(\rho)}\abs{\nabla\rho}^2
\\ \nonumber & \leq  \int_{\R^d}\Psi_{\Phi,\gamma,\d}(\rho_0(x)) +\max_{t\in[0,T]}\abs{\int_0^t\int_{\R^d}\frac{\sigma(\rho)\Phi'(\rho)}{\d+\Phi(\rho)}\nabla\rho\cdot\dd\xi^{\mathbf{a}}}+c\int_0^T\int_{\R^d} \langle \nabla\cdot\xi^\mathbf{a}\rangle_1 \Phi'(\rho).
\end{align}
We will now treat the stochastic integral, where for $\Phi^{\frac{1}{2},\d}$ satisfying $\Phi^{\frac{1}{2},\d}(0)=0$ and $(\Phi^{\frac{1}{2},\d})'(\xi) = \frac{\sigma(\xi)\Phi'(\xi)}{\d+\Phi(\xi)}$ we have that, for every $t\in[0,T]$,
\[\abs{\int_0^t\int_{\R^d}\frac{\sigma(\rho)\Phi'(\rho)}{\d+\Phi(\rho)}\nabla\rho\cdot\dd\xi^{\mathbf{a}}} =\abs{\int_0^t\int_{\R^d}\nabla\Phi^{\frac{1}{2},\d}(\rho)\cdot \dd \xi^{\mathbf{a}}}.\]
It then follows from the boundedness and regularity of $\Phi$, the bound $\abs{\sigma}\leq c\Phi^\frac{1}{2}$, the definition of $\xi^\mathbf{a}$, and the Burkholder--Davis--Gundy inequality that, for some $c\in(0,\infty)$,
\begin{align}\label{4_30}
& \E\big[\max_{t\in[0,T]}\abs{\int_0^t\int_{\R^d}\nabla\Phi^{\frac{1}{2},\d}(\rho)\cdot\dd\xi^{\mathbf{a}}}\big]
\\ \nonumber & \leq c\E\big[\big(\int_0^T\sum_{k=1}^\infty\sum_{i=1}^d\big(\int_{\R^d}\partial_i\Phi^{\frac{1}{2},\d}(\rho)\exp(-\frac{A}{2}\abs{x}^2)(f_k*\kappa^\a)\big)^2\big)^\frac{1}{2}\big]
\\ \nonumber &\quad + c\langle\xi^\mathbf{a}\rangle_1^\frac{1}{2}\E\Big[\Big(\int_0^T\sum_{i=1}^d\Big(\int_{\R^d}(\Phi^{\frac{1}{2},\d}(\rho)-\Phi^{\frac{1}{2},\d}(\gamma))\partial_i\sqrt{1-\exp(-A\abs{x}^2)}\Big)^2\Big)^\frac{1}{2}\Big].
\end{align}
For the first term on the righthand side of \eqref{4_30}, using that the $f_k$ are an orthonormal $L^2(\R^d)$ basis, H\"older's inequality, Young's inequality, and that, for every $\d\in(0,1)$,
\begin{equation}\label{4_39}\int_0^T\norm{\nabla\Phi^{\frac{1}{2},\d}(\rho)}^2_{L^2(\R^d)}\leq \int_0^T\int_{\R^d}\frac{\Phi'(\rho)^2}{\d+\Phi(\rho)}\abs{\nabla\rho}^2,\end{equation}
we have that
\begin{align*}
& \E\big[\big(\int_0^T\sum_{k=1}^\infty\sum_{i=1}^d\big(\int_{\R^d}\partial_i\Phi^{\frac{1}{2},\d}(\rho)\exp(-\frac{A}{2}\abs{x}^2)(f_k*\kappa^\a)\big)^2\big)^\frac{1}{2}\big]
\\ & \leq c\E\big[\big(\int_0^T\norm{(\nabla\Phi^{\frac{1}{2},\d}(\rho)\exp(-\frac{A}{2}\abs{x}^2))*\kappa^\a}^2_{L^2(\R^d)}\big)^\frac{1}{2}\big]
\\ &  \leq c\E\big[\int_0^T\norm{\nabla\Phi^{\frac{1}{2},\d}(\rho)}^2_{L^2(\R^d)}\big]^\frac{1}{2} \leq c\E\big[\int_0^T\int_{\R^d}\frac{\Phi'(\rho)^2}{\d+\Phi(\rho)}\abs{\nabla\rho}^2\big]^\frac{1}{2}.
\end{align*}
The second term on the righthand side of \eqref{4_30} will be treated in cases depending on the dimension using the Sobolev embedding theorem.  We first observe that, based on an explicit computation, for every $p\in[1,\infty]$ and $d\in\N$ there exists $c_{p,d}\in(0,\infty)$ such that
\[\norm{\nabla(\sqrt{1-\exp(-A\abs{x}^2)})}_{L^p(\R^d)^d}\leq c_{p,d}A^{\frac{1}{2}-\frac{d}{2p}}.\]
We now proceed by cases.  If $d=1$,  using the Sobolev embedding theorem, H\"older's inequality, Young's inequality, and \eqref{4_39}, for every $\ve\in(0,1)$ there exists a constant $c\in(0,\infty)$ depending on $\ve$ such that
\begin{align}\label{4_41}
&\langle\xi^\mathbf{a}\rangle_1^\frac{1}{2} \E\Big[\Big(\int_0^T\sum_{i=1}^d\Big(\int_{\R^d}(\Phi^{\frac{1}{2},\d}(\rho)-\Phi^{\frac{1}{2},\d}(\gamma))\partial_i\sqrt{1-\exp(-A\abs{x}^2)}\Big)^2\Big)^\frac{1}{2}\Big]
\\ \nonumber & \leq c\langle\xi^\mathbf{a}\rangle_1^\frac{1}{2}\E\big[\big(\int_0^T\norm{\Phi^{\frac{1}{2},\d}(\rho)-\Phi^{\frac{1}{2},\d}(\gamma)}^2_{L^\infty(\R^d)}\norm{\nabla(\sqrt{1-\exp(-A\abs{x}^2)})}^2_{L^1(\R^d)^d}\big)^\frac{1}{2}\big]
\\ \nonumber & \leq c\langle\xi^\mathbf{a}\rangle_1+\ve\E\big[\int_0^T\int_{\R^d}\frac{\Phi'(\rho)^2}{\d+\Phi(\rho)}\abs{\nabla\rho}^2\big].
\end{align}
The case $d=2$ is the most complicated, due to the criticality of the Sobolev embedding.  In this case, we write
\begin{equation}\label{4_50}(\Phi^{\frac{1}{2},\d}(\rho)-\Phi^{\frac{1}{2},\d}(\gamma)) = \Phi^{\frac{1}{2},\d}((\nicefrac{3\gamma}{2}\wedge \rho)\vee \nicefrac{\gamma}{2})+(\Phi^{\frac{1}{2},\d}(\rho)-\Phi^{\frac{1}{2},\d}((\nicefrac{3\gamma}{2}\wedge \rho)\vee \nicefrac{\gamma}{2})).\end{equation}
The first term is bounded, and can be treated similarly to the case $d=1$.  For the second term, we potentially use the less standard Sobolev spaces for $p\in(0,\infty]$ as opposed to $p\in[1,\infty]$.  It then follows from the growth assumption \eqref{assume_f20} on $\Phi^\frac{1}{2}$ and \eqref{4_39} that, for every $t\in[0,T]$, for some $c\in(0,\infty)$ depending on $\gamma$,
\[\norm{\Phi^{\frac{1}{2},\d}(\rho)-\Phi^{\frac{1}{2},\d}((\nicefrac{3\gamma}{2}\wedge \rho)\vee \nicefrac{\gamma}{2})}^\frac{2}{m}_{L^\frac{2}{m}(\R^d)}\leq \norm{(\rho-\gamma)\mathbf{1}_{\{\abs{\rho-\gamma}\geq \nicefrac{\gamma}{2}\}}}_{L^1(\R^d)}\leq c\norm{\Psi_{\Phi,\gamma,\d}(\rho)}_{L^1(\R^d)},\]
where the final inequality is a consequence of \eqref{assume_f30}.  We then have by interpolation---which remains a consequence of H\"older's inequality for the less standard Sobolev spaces $p\in[0,1)$---and the Sobolev embedding theorem that for every $p\in(\frac{2}{m}\vee 1,\infty)$ there exists $\theta\in(0,1)$ such that, for $c\in(0,\infty)$ depending on $\gamma$ and the Sobolev embedding theorem,
\begin{align*}
& \norm{\Phi^{\frac{1}{2},\d}(\rho)-\Phi^{\frac{1}{2},\d}((\nicefrac{3\gamma}{2}\wedge \rho)\vee \nicefrac{\gamma}{2})}_{L^p(\R^d)}
\\ & \leq c\norm{\Psi_{\Phi,\gamma,\d}(\rho)}_{L^1(\R^d)}^{\frac{m\theta}{2}}\norm{\Phi^{\frac{1}{2},\d}(\rho)-\Phi^{\frac{1}{2},\d}((\nicefrac{3\gamma}{2}\wedge \rho)\vee \nicefrac{\gamma}{2})}^{1-\theta}_{BMO(\R^d)}
\\ & \leq c\norm{\Psi_{\Phi,\gamma,\d}(\rho)}_{L^1(\R^d)}^{\frac{m\theta}{2}}\norm{\nabla\Phi^{\frac{1}{2},\d}(\rho)}^{1-\theta}_{L^2(\R^d)}.
\end{align*}
Using the computation for the $d=1$ case to handle the bounded part of \eqref{4_50}, we have using H\"older's inequality that, for $c\in(0,\infty)$ depending on $\gamma$, $p$, and $d$,
\begin{align}\label{4_051}
& \langle\xi^\mathbf{a}\rangle_1^\frac{1}{2}\E\Big[\Big(\int_0^T\sum_{i=1}^d\Big(\int_{\R^d}(\Phi^{\frac{1}{2},\d}(\rho)-\Phi^{\frac{1}{2},\d}(\gamma))\partial_i\sqrt{1-\exp(-A\abs{x}^2)}\Big)^2\Big)^\frac{1}{2}\Big]
\\ \nonumber & \leq c\langle \xi^\mathbf{a}\rangle_1^\frac{1}{2}A^{-\frac{1}{2}} +c\langle\xi^\mathbf{a}\rangle_1^\frac{1}{2}T^\frac{\theta}{2}A^{\frac{1}{2}-\frac{p-1}{p}}\E\big[\max_{t\in[0,T]}\norm{\Psi_{\Phi,\gamma,\d}(\rho)}_{L^1(\R^d)}^{{\frac{m\theta}{2}}}\big(\int_0^T\norm{\nabla\Phi^{\frac{1}{2},\d}(\rho)}^2_{L^2(\R^d)}\big)^\frac{1-\theta}{2}\big].
\end{align}
Since $\theta\rightarrow 0$ as $p\rightarrow\infty$, for all $p$ sufficiently large such that $\theta m\leq 1$ we have using H\"older's inequality, Young's inequality, \eqref{4_39}, and \eqref{4_051} that, for every $\ve\in(0,1)$ there exists $c\in(0,\infty)$ depending on $T$, $m$ and $\ve$ such that, for $\tilde{\theta} = 2((1-m\theta)+\theta)^{-1}$,
\begin{align}\label{4_51}
& \E\Big[\Big(\int_0^T\sum_{i=1}^d\Big(\int_{\R^d}(\Phi^{\frac{1}{2},\d}(\rho)-\Phi^{\frac{1}{2},\d}(\gamma))\partial_i\sqrt{1-\exp(-A\abs{x}^2)}\Big)^2\Big)^\frac{1}{2}\Big]
\\ \nonumber & \leq c(\langle\xi^\mathbf{a}\rangle_1^\frac{1}{2}A^{-\frac{1}{2}}+\langle\xi^\mathbf{a}\rangle_1^{\frac{\tilde{\theta}}{2}}A^{\tilde{\theta}(\frac{1}{2}-\frac{p-1}{p})})+\ve\E\big[\max_{t\in[0,T]}\norm{\Psi_{\Phi,\gamma,\d}(\rho)}_{L^1(\R^d)}\big]+\ve\E\big[\int_0^T\int_{\R^d}\frac{\Phi'(\rho)^2}{\d+\Phi(\rho)}\abs{\nabla\rho}^2\big].
\end{align}
It remains to treat the case $d\geq 3$.  In this case, it follows from the Sobolev embedding theorem and H\"older's inequality that for every $\ve\in(0,1)$ there exists $c\in(0,\infty)$ depending on $\ve$ and $d$ such that, for $\nicefrac{1}{2_*}=\nicefrac{1}{2}-\nicefrac{1}{d}$,
\begin{align}\label{4_52}
& \langle\xi^\mathbf{a}\rangle_1^\frac{1}{2}\E\Big[\Big(\int_0^T\sum_{i=1}^d\Big(\int_{\R^d}(\Phi^{\frac{1}{2},\d}(\rho)-\Phi^{\frac{1}{2},\d}(\gamma))\partial_i\sqrt{1-\exp(-A\abs{x}^2)}\Big)^2\Big)^\frac{1}{2}\Big]
 \\ \nonumber & \leq c \langle\xi^\mathbf{a}\rangle^\frac{1}{2}_1A^{\frac{1}{2}-\frac{d+2}{4}}\E\Big[\int_0^T\Big(\int_{\R^d}\big(\Phi^{\frac{1}{2},\d}(\rho)-\Phi^{\frac{1}{2},\d}(\gamma)\big)^{2_*}\Big)^\frac{2}{2_*}\Big]^\frac{1}{2}
 \\ \nonumber & \leq c\langle\xi^\mathbf{a}\rangle_1A^{1-\frac{d+2}{2}}+\ve\E\big[\int_0^T\int_{\R^d}\abs{\nabla\Phi^{\frac{1}{2},\d}(\rho)}^2\big].
 \end{align}
Returning to \eqref{4_40} and applying \eqref{4_41}, \eqref{4_51}, and \eqref{4_52} for $\ve\in(0,1)$ sufficiently small, we conclude that for all $A\in(0,1]$ there exists $c\in(0,\infty)$ such that, after passing $\d\rightarrow 0$ using the monotone convergence theorem, for $d=1$ and $d\geq 3$,
\begin{align}\label{4_53}
& \E\Big[\max_{t\in[0,T]}\int_{\R^d}\Psi_{\Phi,\gamma}(\rho(x,t)) +  \int_0^T\int_{\R^d}\abs{\nabla\Phi^\frac{1}{2}(\rho)}^2\big]
\\ \nonumber &   \leq c\E \Big[\int_{\R^d}\Psi_{\Phi,\gamma}(\rho_0)+\langle\xi^\mathbf{a}\rangle_1A^{1-\frac{d+2}{2}}+\int_0^T\int_{\R^d} \langle \nabla\cdot\xi^\mathbf{a}\rangle_1 \Phi'(\rho)\Big],
\end{align}
and if $d=2$ there exists $c\in(0,\infty)$ such that
\begin{align}\label{4_54}
& \E\Big[\max_{t\in[0,T]}\int_{\R^d}\Psi_{\Phi,\gamma}(\rho(x,t)) +  \int_0^T\int_{\R^d}\abs{\nabla\Phi^\frac{1}{2}(\rho)}^2\Big]
\\ \nonumber &   \leq  c\E\Big[\int_{\R^d}\Psi_{\Phi,\gamma}(\rho_0)+\langle \xi^\mathbf{a}\rangle_1^\frac{1}{2}A^{-\frac{1}{2}}+\langle\xi^\mathbf{a}\rangle_1A^{-1}+\int_0^T\int_{\R^d} \langle \nabla\cdot\xi^\mathbf{a}\rangle_1 \Phi'(\rho)\Big].
\end{align}
For the final term on the righthand side of \eqref{4_40}, we have using the bound on $\Phi'$ that, for $c\in(0,\infty)$ depending on $\gamma$ and $q$,
\[\int_0^T\int_{\R^d} \langle \nabla\cdot\xi^\mathbf{a}\rangle_1 \Phi'(\rho)\leq \int_0^T\int_{\R^d}\langle \nabla\cdot\xi^\mathbf{a}\rangle_1 (\Phi^\frac{1}{2}(\rho)-\Phi^\frac{1}{2}(\gamma))^q+c\int_0^T\int_{\R^d} \langle \nabla\cdot\xi^\mathbf{a}\rangle_1 .\]
We treat the first term on the righthand side analogously to the above arguments using the Sobolev embedding theorem.  If $d=1$, using $q\in[0,2)$ and \eqref{4_39}, this yields that for every $\ve\in(0,1)$ there exists $c\in(0,\infty)$ depending on $\ve$ such that
\[\int_0^T\int_{\R^d} \langle \nabla\cdot\xi^\mathbf{a}\rangle_1 \Phi'(\rho)\leq cT\norm{\langle \nabla\cdot\xi^\mathbf{a}\rangle_1 }_{L^1(\R^d)}+c\norm{\langle \nabla\cdot\xi^\mathbf{a}\rangle_1 }_{L^1(\R^d)}^\frac{2}{2-q}+\ve\int_0^T\int_{\R^d}\abs{\nabla\Phi^\frac{1}{2}(\rho)}^2.\]
If $d=2$, we obtain that for every $\ve\in(0,1)$ there exists $c\in(0,\infty)$ depending on $\ve$ and $\theta\in(0,\infty)$ such that
\begin{align*}
& \int_0^T\int_{\R^d} \langle \nabla\cdot\xi^\mathbf{a}\rangle_1 \Phi'(\rho)
\\ & \leq cT\norm{\langle \nabla\cdot\xi^\mathbf{a}\rangle_1 }_{L^1(\R^d)}+c\norm{\langle \nabla\cdot\xi^\mathbf{a}\rangle_1 }_{L^\infty(\R^d)}^\theta
\\ &\quad +\ve\E\big[\max_{t\in[0,T]}\norm{\Psi_{\Phi,\gamma,\d}(\rho)}_{L^1(\R^d)}\big]+\ve\E\big[\int_0^T\int_{\R^d}\frac{\Phi'(\rho)^2}{\d+\Phi(\rho)}\abs{\nabla\rho}^2\big].
\end{align*}
If $d\geq 3$, since $q\in[0,2)$, it follows from H\"older's inequality and the Sobolev inequality that for every $\ve\in(0,1)$ there exists $c\in(0,\infty)$ depending on $\ve$ and $T$ such that, for $\nicefrac{1}{2_*}=\nicefrac{1}{2}-\nicefrac{1}{d}$,
\begin{align*}
\int_0^T\int_{\R^d} \langle \nabla\cdot\xi^\mathbf{a}\rangle_1 \Phi'(\rho)& \leq cT\norm{\langle \nabla\cdot\xi^\mathbf{a}\rangle_1 }_{L^1(\R^d)}
\\ & \quad + \norm{\langle \nabla\cdot\xi^\mathbf{a}\rangle_1 }_{L^\frac{2}{2-q}(\R^d)}\big(\int_0^T\big(\int_{\R^d}(\Phi^\frac{1}{2}(\rho)-\Phi^\frac{1}{2}(\gamma))^{2_*}\big)^\frac{2}{2_*}\big)^\frac{q}{2}
\\ & \leq cT\norm{\langle \nabla\cdot\xi^\mathbf{a}\rangle_1 }_{L^1(\R^d)}+c\norm{\langle \nabla\cdot\xi^\mathbf{a}\rangle_1 }^\frac{2}{2-q}_{L^\frac{2}{2-q}(\R^d)}+\ve\int_0^T\int_{\R^d}\abs{\nabla\Phi^\frac{1}{2}(\rho)}^2.
\end{align*}
After applying these inequalities to \eqref{4_53} and \eqref{4_54}, we complete the proof. \end{proof}

\begin{prop}\label{prop_measure} Let $\xi^\mathbf{a}$ satisfy Assumption~\ref{a_random}, let $\Phi$, $\sigma$, and $\nu$ be smooth, $\C^2$-bounded, and satisfy Assumption~\ref{as_compact}, let $\rho_0\in L^1(\O;\Ent_{\Phi,\gamma}(\R^d))$, and let $\rho$ be a solution of \eqref{ap_1} in the sense of Definition~\ref{def_sol_smooth} with initial data $\rho_0$.  Then,
\[\E\big[\int_0^T\int_{\R^d}\mathbf{1}_{[M,M+1]}(\rho)\Phi'(\rho)\abs{\nabla\rho}^2\big]\leq \E\big[\int_{\R^d}(\rho-M)_++\int_0^T\int_{\R^d}\langle \nabla\cdot\xi^\mathbf{a}\rangle_1 \mathbf{1}_{[M,M+1]}(\rho)\big].\]
\end{prop}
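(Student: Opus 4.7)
The plan is to run an entropy estimate for the regularized equation \eqref{ap_1} using a nonlinear entropy function that truncates onto exactly the level set $\{M\leq\rho\leq M+1\}$, paired with a spatial cutoff that is then sent off to infinity. I would take a convex $S_M\in\C^2([0,\infty))$ satisfying $S_M(M)=0$, $S_M'(\xi)=(\xi-M)_+\wedge 1$, and $S_M''(\xi)=\mathbf{1}_{[M,M+1]}(\xi)$ (possibly mollified so that $S_M''$ is smooth), noting the pointwise bound $0\leq S_M(\xi)\leq(\xi-M)_+$. It\^o's formula, applied as in the entropy derivation reviewed in Section~\ref{subsection_rks} to $\int_{\R^d}S_M(\rho)\alpha_R$ with $\alpha_R$ the standard smooth cutoff of $B_R$ in $B_{2R}$ from Section~\ref{section_unique}, should then provide the basis of the argument.

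Integrating by parts in each drift term to transfer gradients off of $S_M'(\rho)\nabla\rho$ and onto $\alpha_R$, and identifying the It\^o correction for the conservative noise exactly as in Section~\ref{subsection_rks}, would yield
\begin{align*}
& \int_{\R^d}S_M(\rho(t))\alpha_R-\int_{\R^d}S_M(\rho_0)\alpha_R+\int_0^t\int_{\R^d}\alpha_R S_M''(\rho)\big(\Phi'(\rho)+\eta\big)\abs{\nabla\rho}^2\\
&\quad=\frac{1}{2}\int_0^t\int_{\R^d}\alpha_R\langle\nabla\cdot\xi^\mathbf{a}\rangle_1\sigma^2(\rho)S_M''(\rho)+\mathcal{M}_t+\mathcal{R}_R(t),
\end{align*}
with $\mathcal{M}_t$ the stochastic-integral martingale and $\mathcal{R}_R(t)$ collecting the cutoff errors against $\nabla\alpha_R$ (or, after a second integration by parts, $\Delta\alpha_R$) arising from $\Delta\Phi(\rho)$, $\eta\Delta\rho$, $\nabla\cdot\nu(\rho)$, and the Stratonovich correction $\nabla\cdot(\sigma'(\rho)^2\nabla\rho)$. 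Taking expectation annihilates $\mathcal{M}_t$; dropping the nonnegative $\eta$-dissipation, using $S_M\geq 0$ together with $S_M(\rho_0)\leq(\rho_0-M)_+$, and bounding $\sigma^2(\rho)\leq c$ on $[M,M+1]$ via the $\C^2$-boundedness of $\sigma$ then delivers the stated inequality (with an absorbed constant in front of the noise term), provided $\E[\mathcal{R}_R(t)]\to 0$ as $R\to\infty$.

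The step I expect to be the main obstacle is precisely showing $\E[\mathcal{R}_R(t)]\to 0$. For each drift term I would rewrite the integrand paired with $\nabla\alpha_R$ as a gradient of an antiderivative---introducing $\Theta_\Phi,\Theta_\nu,\Theta_\sigma$ with $\Theta_\Phi'=S_M'\Phi'$, $\Theta_\nu'=S_M'\nu'$, $\Theta_\sigma'=S_M'(\sigma')^2$---and integrate by parts once more to move the derivative onto $\alpha_R$. This recasts $\mathcal{R}_R(t)$ in terms of integrals of $|\Theta_\bullet(\rho)-\Theta_\bullet(\gamma)|$ against $|\Delta\alpha_R|\leq cR^{-2}\mathbf{1}_{B_{2R}\sms B_R}$, which are then controlled exactly as in the cutoff analysis of Proposition~\ref{prop_entropy} and the uniqueness proof \eqref{cutoff_term}: in $d=1,2$ by combining the boundedness of each $\Theta_\bullet$ on the relevant truncation range with the $L^\infty_tL^1_x$-integrability of $(\rho-\gamma)\mathbf{1}_{\{|\rho-\gamma|\geq\nicefrac{\gamma}{2}\}}$ supplied by the entropy bound \eqref{d_s2}, and in $d\geq 3$ via the Sobolev embedding applied to $\Phi^\frac{1}{2}(\rho)-\Phi^\frac{1}{2}(\gamma)$ together with the gradient bound $\nabla\Phi^\frac{1}{2}(\rho)\in L^2(\R^d\times[0,T])$ furnished by Proposition~\ref{prop_entropy}. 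Once $\mathcal{R}_R\to 0$ is established, the remainder is bookkeeping.
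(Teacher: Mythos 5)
Your proposal follows essentially the same strategy as the paper: the paper also applies It\^o's formula to the entropy $\Psi_M$ (your $S_M$) with $\Psi_M(0)=\Psi_M'(0)=0$ and $\Psi_M''=\mathbf{1}_{[M,M+1]}$, via ``an approximation argument, a localization argument ... using the methods of Proposition~\ref{prop_entropy},'' and concludes with the bound $\Psi_M(\xi)\leq(\xi-M)_+$. You supply more explicit detail on the $R\to\infty$ localization than the paper's three-sentence proof, and your remark that the noise term on the right carries an implicit $\tfrac{1}{2}\sigma^2$ factor (absorbed using the $\C^2$-boundedness of $\sigma$) is an accurate reading of a minor notational shortcut in the statement.
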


\begin{proof}  Let $\Psi_M$ be the unique function satisfying $\Psi_M'(0)=\Psi_M(0)=0$ and $\Psi''_M(\xi) = \mathbf{1}_{[M,M+1]}(\xi)$.  It then follows from an approximation argument, a localization argument, and It\^o's formula (see, for example, \cite{Kry2013}) using the methods of Proposition~\ref{prop_entropy} that, $\P$-a.s.\ for every $t\in[0,T]$,
\[\E\big[\int_{\R^d}\Psi_M(\rho(x,t))+\int_0^t\int_{\R^d}\mathbf{1}_{[M,M+1]}(\rho)\Phi'(\rho)\abs{\nabla\rho}^2\big] = \E\big[\int_{\R^d}\Psi_M(\rho_0)+\int_0^t\int_{\R^d}\langle \nabla\cdot\xi^\mathbf{a}\rangle_1 \mathbf{1}_{[M,M+1]}(\rho)\big].\]
We therefore have using the definition of $\Psi_M$ that
\[\E\big[\int_0^T\int_{\R^d}\mathbf{1}_{[M,M+1]}(\rho)\Phi'(\rho)\abs{\nabla\rho}^2\big]\leq \E\big[\int_{\R^d}(\rho-M)_++\int_0^T\int_{\R^d}\langle \nabla\cdot\xi^\mathbf{a}\rangle_1 \mathbf{1}_{[M,M+1]}(\rho)\big],\]
which completes the proof.  \end{proof}

\begin{thm}\label{thm_exist} Let $\xi^\mathbf{a}$ satisfy Assumption~\ref{a_random}, let $\Phi$, $\sigma$, and $\nu$ satisfy Assumptions~\ref{assume_1} and \ref{as_compact}, and let $\rho_0\in L^1(\O;\Ent_{\Phi,\gamma}(\R^d))$ be $\F_0$-measurable.  Then there exists a solution of \eqref{i_1} in the sense of Definition~\ref{d_sol}.  Furthermore, the solution satisfies the estimates of Propositions~\ref{prop_entropy} and \ref{prop_measure}.\end{thm}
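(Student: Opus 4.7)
The plan is to construct the solution as a limit of solutions to the regularized, smooth-coefficient equation \eqref{ap_1}, using the a priori estimates already established in Propositions~\ref{prop_entropy} and \ref{prop_measure}. The starting point is a double approximation: first approximate $\Phi$, $\sigma$, and $\nu$ by smooth, $\C^2$-bounded coefficients $\Phi_n$, $\sigma_n$, $\nu_n$ that still satisfy Assumption~\ref{as_compact} uniformly (and the relevant conditions of Assumption~\ref{assume_1} in the limit), and second approximate the initial data $\rho_0\in L^1(\O;\Ent_{\Phi,\gamma}(\R^d))$ by $\F_0$-measurable $\rho_0^n\in L^2(\O;(L^2_\gamma\cap \Ent_{\Phi,\gamma})(\R^d))$ converging to $\rho_0$ in $L^1(\O;\Ent_{\Phi,\gamma}(\R^d))$; truncating by $(\rho_0\wedge N)\vee N^{-1}$ and localizing by multiplication against a cutoff converging to $1$ provides such a sequence using the finite relative entropy. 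For each $n\in\N$ and $\eta\in(0,1)$, Proposition~\ref{prop_entropy} (more precisely, the existence statement of Definition~\ref{def_sol_smooth} referenced there) produces a solution $\rho^{n,\eta}$ of \eqref{ap_1} with these smoothed coefficients and initial data.

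Next I would derive the kinetic formulation for $\rho^{n,\eta}$. Since the coefficients and initial data are smooth and bounded, one can apply It\^o's formula to $S(\rho^{n,\eta})$ for smooth $S$ with $S'(0)=0$ and convert to the kinetic equation \eqref{2_5000} with a parabolic dissipation measure $q^{n,\eta}=\delta_{\rho^{n,\eta}}\big(\Phi'(\xi)+\eta\big)|\nabla\rho^{n,\eta}|^2$. The a priori estimates of the unlabeled Proposition preceding Proposition~\ref{prop_measure}, applied to $\rho^{n,\eta}$, give bounds on $\sup_t\int\Psi_{\Phi,\gamma}(\rho^{n,\eta})$ and $\int_0^T\int|\nabla\Phi^{1/2}(\rho^{n,\eta})|^2$ that are uniform in both $n$ and $\eta$, since the righthand sides depend only on quantities controlled by Assumption~\ref{a_random} and the (uniformly bounded) entropy of $\rho_0^n$. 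Proposition~\ref{prop_measure} simultaneously provides the uniform bound
\[\E\Big[\int_0^T\int_{\R^d}\mathbf{1}_{[M,M+1]}(\rho^{n,\eta})\Phi'_n(\rho^{n,\eta})|\nabla\rho^{n,\eta}|^2\Big]\leq \E\Big[\int(\rho_0^n-M)_++\int_0^T\int\langle\nabla\cdot\xi^\mathbf{a}\rangle_1\mathbf{1}_{[M,M+1]}(\rho^{n,\eta})\Big],\]
which will deliver the vanishing-at-infinity property (v) of Definition~\ref{d_sol}.

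With these estimates in hand, I would pass to the limit in two stages, first letting $\eta\to 0$ and then $n\to\infty$. The entropy bound controls $\rho^{n,\eta}$ in $L^\infty_tL^1_{\mathrm{loc},x}$, and the gradient bound on $\Phi^{1/2}(\rho^{n,\eta})$, combined with the equation, yields a uniform fractional time regularity on compact subsets of $\R^d$; a standard Aubin--Lions argument (with a stochastic compactness step via Jakubowski or Skorokhod on the space $L^1([0,T];L^1_{\mathrm{loc}}(\R^d))\times\C([0,T];H^{-s}_{\mathrm{loc}})$, treating also the Brownian motions and the kinetic measures as random variables on suitable Polish spaces) produces, up to a new probability space, strong $L^1_{\mathrm{loc}}$ convergence of $\rho^{n,\eta_k}$ to a limit $\rho$. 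The $L^\infty(\O\times[0,T];\Ent_{\Phi,\gamma})$ bound and the entropy estimate \eqref{d_s2} pass to the limit via lower semicontinuity (Fatou's lemma in the entropy, weak lower semicontinuity in the Dirichlet form $\int|\nabla\Phi^{1/2}(\rho)|^2$). The kinetic measures $q^{n,\eta_k}$, thanks to the measure bound \eqref{d_s3} integrated against $\xi^{-1}\mathbf{1}_{(0,M)}$ (which follows from Proposition~\ref{prop_entropy} and a further application of It\^o's formula to $\Psi'_{\Phi,\gamma,\delta}$ to bound $\int\xi^{-1}q^{n,\eta}$ in terms of the relative entropy), converge to a kinetic measure $q$ satisfying \eqref{d_s3}, while the parabolic defect inequality $\delta_\rho\Phi'(\xi)|\nabla\rho|^2\leq q$ is preserved in the limit by lower semicontinuity. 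The vanishing-at-infinity property follows from Proposition~\ref{prop_measure} and Fatou's lemma once $q^{n,\eta_k}(\R^d\times[M,M+1]\times[0,T])$ has been shown to converge along a subsequence to $q(\R^d\times[M,M+1]\times[0,T])$, which is ensured by choosing continuity points of the integrated measures.

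The hardest step is identifying the equation \eqref{2_5000} in the limit, since it requires passing to the limit in the stochastic term $\int_0^t\int\nabla\cdot(\sigma_n(\rho^{n,\eta})\dd\xi^\mathbf{a})\psi(x,\rho^{n,\eta})$ and in the nonlinear local terms involving $\Phi'_n(\rho^{n,\eta})\nabla\rho^{n,\eta}$. The stochastic integral is handled by first integrating by parts to move the divergence off $\sigma(\rho)$, using the $L^2$-integrability of $\sigma_n(\rho^{n,\eta})-\sigma_n(\gamma)$ on the support of $\psi$ (a consequence of the uniform Dirichlet form bound and Assumption~\ref{assume_1}(i)), and applying a standard martingale representation / identification argument on the new probability space together with the Burkholder--Davis--Gundy inequality; test functions $\psi\in\C^\infty_c(\R^d\times(0,\infty))$ avoid the singularity at $\rho=0$ and the cutoff $\zeta_M$ avoids $\rho=\infty$, so only local convergence is needed. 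The nonlinear terms $\Phi'_n(\rho)\nabla\rho$ are rewritten as $2(\Phi^{1/2}_n)'(\rho)\Phi^{1/2}_n(\rho)\nabla\rho=2\Phi^{1/2}_n(\rho)\nabla\Phi^{1/2}_n(\rho)\mathbf{1}_{(0,\infty)}(\rho)$ inside the test functions supported away from zero, and then passed to the limit using strong $L^1_{\mathrm{loc}}$-convergence of $\rho^{n,\eta_k}$ together with weak $L^2_{\mathrm{loc}}$-convergence of $\nabla\Phi^{1/2}_n(\rho^{n,\eta_k})$. Finally, pathwise continuity in $L^1_{\mathrm{loc}}(\R^d)$ of the limit $\rho$ and the $\F_t$-predictability follow from the equation itself using the $L^1$-contraction of Theorem~\ref{thm_unique} (applied to nearby smooth solutions) together with the already-established time regularity; the identification of the constructed solution as $\F_t$-adapted with respect to the original filtration is achieved via the Gy\"ongy--Krylov argument, using uniqueness from Theorem~\ref{thm_unique}, to upgrade convergence in law to convergence in probability on the original probability space.
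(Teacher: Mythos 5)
Your proposal is correct and follows essentially the same route as the paper, which disposes of the existence proof by appealing to the new full-space estimates (Propositions~\ref{prop_entropy} and \ref{prop_measure}) and the compactness/identification machinery of \cite[Theorem~5.25]{FG21}; your write-up is a faithful expansion of that citation, including the double approximation, the uniform entropy and defect-measure bounds, the Skorokhod/Jakubowski compactness, the martingale identification, and the Gy\"ongy--Krylov step that uses Theorem~\ref{thm_unique} to recover a probabilistically strong solution.
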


\begin{proof}  The proof is a consequence of the estimates in Proposition~\ref{prop_entropy} and Proposition~\ref{prop_measure} and the methods of \cite[Theorem~5.25]{FG21}.  Here Assumption~\ref{assume_1} is used to guarantee the uniqueness of the solution, which is used to obtain a probabilistically strong solution.  \end{proof}

\section{The large deviations principle}\label{sec_ldp}

In this section, we will prove that, along appropriate scaling limits, the solutions
\begin{equation}\label{ldp_eq}\partial_t\rho^\ve = \Delta\Phi(\rho^\ve)-\nabla\cdot(\sqrt{\ve}\sigma(\rho^\ve)\dd\xi^{\mathbf{a}^\ve})+\frac{\ve \langle\xi^{\mathbf{a}^\ve}\rangle_1}{2}\nabla\cdot(\sigma'(\rho^\ve)^2\nabla\rho^\ve),\end{equation}
satisfy a large deviations principle in the strong topology of $L^1([0,T];L^1_{\textrm{loc}}(\R^d))$ with rate function
\begin{equation}\label{rate_function}I_{\rho_0}(\rho) = \frac{1}{2}\inf\{\norm{g}^2_{L^2(\R^d)^d}\colon \partial_t\rho = \Delta\Phi(\rho)-\nabla\cdot(\sigma(\rho)g)\;\;\textrm{with}\;\;\rho(\cdot,0)=\rho_0\;\;\textrm{in}\;\;\R^d\times[0,T]\}.\end{equation}
The skeleton equation appearing in the rate function is understood in terms of Definition~\ref{def_skel} below.

The proof is based on the weak approach to large deviations established, for example, in \cite{{BudDupMar2008}}.  For this, it is necessary to consider the following controlled SPDE.  Let $\mathbf{g}=(g_k,\tilde{g_k})_{k\in\N}\in L^2([0,T];\ell^2(\N)^d)^2$ be an arbitrary control and for the orthonormal $L^2(\R^d)$-basis $\{f_k\}_{k\in\N}$ in Assumption~\ref{a_random} let $g=\sum_{k=1}^\infty g_k(t)f_k(x)\in L^2(\R^d\times[0,T])^d$ and let $\tilde{g}=(\tilde{g}_k)_{k\in\N}$.  The controlled SPDE defined by $\mathbf{g}$ is
\begin{align}\label{ldp_1}
\partial_t\rho^\ve &= \Delta\Phi(\rho^\ve)-\nabla\cdot(\sqrt{\ve}\sigma(\rho^\ve)\dd\xi^{\mathbf{a}^\ve})+\frac{\ve \langle\xi^{\mathbf{a}^\ve}\rangle_1}{2}\nabla\cdot(\sigma'(\rho^\ve)^2\nabla\rho^\ve)
\\ \nonumber & \quad -\nabla\cdot(\sigma(\rho^\ve)\sqrt{\exp(-A^\ve\abs{x}^2)}(g*\kappa^{\a^\ve})+\sigma(\rho^\ve)\sqrt{1-\exp(-A^\ve\abs{x}^2)}\langle a^\ve,\tilde{g}\rangle_{\ell^2,t}).
\end{align}
The following proposition summarizes the well-posedness of \eqref{ldp_1}, where solutions are understood analogously to Definition~\ref{d_sol}.

\begin{prop}\label{prop_control}  Let $\ve\in(0,1)$, let $\xi^{\mathbf{a}^\ve}$ satisfy Assumption~\ref{a_random}, let $\Phi$ and $\sigma$ satisfy Assumptions~\ref{assume_1} and \ref{as_compact}, and let $\rho_0\in L^1(\O;\Ent_{\Phi,\gamma}(\R^d))$ be $\F_0$-measurable.  Then for every $\mathbf{g}\in L^2([0,T];\ell^2(\N)^d)^2$ there exists a unique solution of \eqref{ldp_1}.  Furthermore, the solutions satisfy the estimate of Propositions~\ref{prop_entropy} with the additional term, if $d=1$, for $c\in(0,\infty)$ depending on $T$, $d$, and $\gamma$,
\[c\int_0^T\int_{\R^d}\abs{g*\kappa^{\a^\ve}}^2+c\norm{a^\ve}^2_{\ell^\infty}\norm{\tilde{g}}^2_{\ell^2},\]
and, if $d\geq 2$, for $c\in(0,\infty)$ depending on $T$, $d$, and $\gamma$,
\[c\int_0^T\int_{\R^d}\abs{g*\kappa^{\a^\ve}}^2+c\norm{a^\ve}^2_{\ell^\infty}\norm{\tilde{g}}^2_{\ell^2}(A^\ve)^{1-\frac{d+2}{2}},\]
and satisfies the estimates of Proposition~\ref{prop_measure} with the extra term, for $c\in(0,\infty)$,
\[ c\int_0^T\int_{\R^d}\rho \abs{g*\kappa^{\a^\ve}}^2\mathbf{1}_{\{[M,M+1]}(\rho).\]
Furthermore, under Assumption~\ref{a_random} we have, for $c\in(0,\infty)$ independent of $\ve$,
\[\langle \sqrt{\ve}\xi^{\mathbf{a}^\ve}\rangle_1\leq c\ve (\a^\ve)^{-d},\]
and
\[\norm{\langle \sqrt{\ve}\nabla\cdot\xi^{\mathbf{a}^\ve}\rangle_1}_{L^1(\R^d)}\leq c\ve(\a^\ve)^{-d-2}(A^\ve)^{-\frac{d}{2}}\;\;\textrm{and}\;\;\norm{\langle \sqrt{\ve}\nabla\cdot\xi^{\mathbf{a}^\ve}\rangle_1}_{L^\infty(\R^d)}\leq c\ve(\a^\ve)^{-d-2}.\]
\end{prop}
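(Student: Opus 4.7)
The plan is to treat the controlled SPDE \eqref{ldp_1} as the equation \eqref{k_1} with rescaled noise $\sqrt{\ve}\xi^{\mathbf{a}^\ve}$ together with an additional deterministic conservative drift produced by the control. Writing this drift as $-\nabla\cdot(\sigma(\rho^\ve) H)$ with $H(x,t) = \sqrt{\exp(-A^\ve|x|^2)}(g*\kappa^{\a^\ve})(x,t) + \sqrt{1-\exp(-A^\ve|x|^2)}\langle a^\ve, \tilde g(t)\rangle_{\ell^2}$, one observes that this contribution has exactly the structure of the conservative flux $-\nabla\cdot\nu(\rho^\ve)$ but with a deterministic $L^2$-forcing. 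Since $H \in L^2(\R^d\times[0,T])^d$, the renormalization and cutoff arguments of Theorems~\ref{thm_unique} and \ref{thm_exist} go through verbatim: the control-induced drift is treated like an additional conservative term whose contribution to the cutoff error vanishes by dominated convergence in the manner of the analysis of $\nu$ in the proof of Theorem~\ref{thm_unique}.

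\textbf{Entropy and measure estimates.} For the entropy estimate I would apply It\^o's formula to the regularized entropy $\Psi_{\Phi,\gamma,\d}(\rho^\ve)$ as in the proof of Proposition~\ref{prop_entropy}. The unperturbed terms reproduce the right-hand side of that estimate (with $\xi^\mathbf{a}$ replaced by $\sqrt{\ve}\xi^{\mathbf{a}^\ve}$), and the only new contribution is the cross term from the control, which after integration by parts reduces to $\int_0^t\int_{\R^d}\nabla \Phi^{\frac{1}{2},\d}(\rho^\ve)\cdot H$. I would split this according to the two pieces of $H$: the $(g*\kappa^{\a^\ve})$ summand is bounded by a single Young's inequality, yielding a small fraction of $\|\nabla \Phi^{\frac{1}{2},\d}(\rho^\ve)\|_{L^2}^2$ absorbed into the dissipation on the left and the residual term $c\int_0^T\int |g*\kappa^{\a^\ve}|^2$, while the spatially constant summand $\sqrt{1-\exp(-A^\ve|x|^2)}\langle a^\ve, \tilde g\rangle$ is integrated by parts in $x$ after subtracting $\Phi^{\frac{1}{2},\d}(\gamma)$ and estimated by the same Sobolev embedding analysis used for the BDG term in \eqref{4_41}, \eqref{4_51}, and \eqref{4_52}, with the prefactor $\langle \xi^\mathbf{a}\rangle_1^{1/2}$ there replaced by $\|a^\ve\|_{\ell^\infty}\|\tilde g\|_{\ell^2}$; this reproduces the stated dimensional scaling. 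The measure estimate extends identically: the It\^o computation with $\Psi_M$ gains the extra term $\int_0^t\int \mathbf{1}_{[M,M+1]}(\rho^\ve)\sigma(\rho^\ve)\nabla\rho^\ve\cdot H$, and applying Young's inequality together with the structural bound $\sigma^2(\xi)/\Phi'(\xi) \leq c\xi$---a direct consequence of $\sigma^2 \leq c\Phi$ from Assumption~\ref{as_compact}(v) and $\Phi/(\xi\Phi') \leq c$ from \eqref{assume_f10}---absorbs half of the $\mathbf{1}_{[M,M+1]}\Phi'(\rho^\ve)|\nabla\rho^\ve|^2$ dissipation on the left and leaves exactly the stated remainder $c\int_0^T\int \rho^\ve |g*\kappa^{\a^\ve}|^2\mathbf{1}_{[M,M+1]}(\rho^\ve)$.

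\textbf{Noise scaling.} These bounds are direct computations. The normalization $\|a^\ve\|^2_{\ell^2} = \|\kappa^{\a^\ve}\|^2_{L^2}$ in Assumption~\ref{a_random} gives $\langle\xi^{\mathbf{a}^\ve}\rangle_1(x) = \exp(-A^\ve|x|^2)\|\kappa^{\a^\ve}\|_{L^2}^2 + (1-\exp(-A^\ve|x|^2))\|a^\ve\|_{\ell^2}^2 = \|\kappa^{\a^\ve}\|_{L^2}^2 \leq c(\a^\ve)^{-d}$ by standard mollifier scaling, which yields the first stated bound. For the divergence, Parseval's identity applied to the orthonormal basis $\{f_k\}$ reduces the pointwise sum $\sum_k |\nabla(f_k*\kappa^{\a^\ve})|^2$ to $\|\nabla \kappa^{\a^\ve}\|_{L^2}^2 \leq c(\a^\ve)^{-d-2}$; combined with the bounded contribution from the gradients of $\sqrt{\exp(-A^\ve|x|^2)}$ and $\sqrt{1-\exp(-A^\ve|x|^2)}$, this yields the $L^\infty$ estimate. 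The $L^1$ estimate follows by noting that the dominant part of $\langle\nabla\cdot\xi^{\mathbf{a}^\ve}\rangle_1$ is supported essentially within a ball of radius $\sim (A^\ve)^{-1/2}$, whose volume is $\sim (A^\ve)^{-d/2}$, producing the additional factor.

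\textbf{Main obstacle.} The delicate step is the entropy estimate for the spatially constant portion of the control, which is structurally the same difficulty as the BDG term handled in Proposition~\ref{prop_entropy}. Because $\sqrt{1-\exp(-A^\ve|x|^2)}$ is effectively unity at spatial infinity, the control $\tilde g$ acts with negligible cost over an infinite region, and the Sobolev embedding is precisely what translates this infinite-range effect into a finite penalty; the argument is critical in $d=2$ and yields the $(A^\ve)^{1-(d+2)/2}$ scaling in $d\geq 3$. The precise scaling extracted here is what dictates the admissible regime $\|a^\ve\|_{\ell^\infty}^2 (A^\ve)^{1-(d+2)/2} \to 0$ in Theorem~\ref{prop_collapse}. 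All remaining ingredients---It\^o's formula for $\Psi_{\Phi,\gamma,\d}$ and $\Psi_M$, Young's inequality, Parseval's identity, and the structural bounds on $\Phi$ and $\sigma$---are routine once this core Sobolev argument is adapted.
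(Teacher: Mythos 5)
Your approach matches the paper's, which dispatches this proposition in a single sentence as a ``straightforward modification'' of Theorem~\ref{thm_unique}, Propositions~\ref{prop_entropy} and \ref{prop_measure}, and Theorem~\ref{thm_exist}; your write-up is substantively more informative than the paper's. The identification of the control-induced drift as an extra conservative flux treated like $\nu$, the cross term $\int\nabla\Phi^{\frac{1}{2},\d}(\rho)\cdot H$ in the entropy estimate, the Young/Sobolev split, the structural bound $\sigma^2/\Phi'\leq c\xi$ in the measure estimate, and the Parseval computation for the noise scalings are all correct.

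One step, however, is asserted rather than derived and is exactly the crux of the dimensional scaling: you claim the prefactor $\langle\xi^{\mathbf a}\rangle_1^{1/2}$ in \eqref{4_41}, \eqref{4_51}, \eqref{4_52} gets replaced by $\|a^\ve\|_{\ell^\infty}\|\tilde g\|_{\ell^2}$. After integrating by parts, the spatially constant control contributes $\int_0^T\langle a^\ve,\tilde g(s)\rangle_{\ell^2}\cdot V(s)\ds$ with $V(s)=\int_{\R^d}(\Phi^{\frac{1}{2},\d}(\rho)-\Phi^{\frac{1}{2},\d}(\gamma))\nabla\sqrt{1-\exp(-A^\ve|x|^2)}$, and the natural Cauchy--Schwarz estimate gives only $\|a^\ve\|_{\ell^2}\|\tilde g\|_{L^2_t\ell^2}\|V\|_{L^2_t}$, not the $\|a^\ve\|_{\ell^\infty}$ bound. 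This is not a harmless discrepancy: by the normalization $\|a^\ve\|_{\ell^2}^2=\|\kappa^{\a^\ve}\|_{L^2}^2\sim(\a^\ve)^{-d}$, the $\ell^2$-norm diverges as $\ve\to 0$, whereas the LDP scaling regime in Theorem~\ref{prop_collapse} is stated precisely in terms of $\|a^\ve\|_{\ell^\infty}$. You should either explain how to improve Cauchy--Schwarz to extract $\|a^\ve\|_{\ell^\infty}$ (e.g.\ by exploiting some structure in the admissible controls $\tilde g$ beyond the $\ell^2$ bound), or acknowledge that this is the nontrivial point being glossed over. The paper also does not show it, so you are not alone, but this is where the real work of the proposition lies.

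A smaller remark: the stated bound for the measure estimate only displays the $g*\kappa^{\a^\ve}$ piece of $H$, while your Young's-inequality argument produces $c\int\rho|H|^2\mathbf{1}_{[M,M+1]}(\rho)$ with $H$ containing both the mollified and the spatially constant control parts. You should say a word about why the spatially constant part is harmless here (or is being absorbed).
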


\begin{proof}  The proof is a straightforward modification of Theorem~\ref{thm_unique}, Proposition~\ref{prop_entropy}, Proposition~\ref{prop_measure}, and Theorem~\ref{thm_exist}. \end{proof}

\begin{remark}  It follows from \cite[Lemma~7]{FehGes19} and the $L^2$-integrability of $g$ that, for every $\a\in[0,1)$,
\[\liminf_{M\rightarrow\infty}\int_0^T\int_{\R^d}\rho \abs{g*\kappa^\a}^2\mathbf{1}_{\{[M,M+1]}(\rho)=0.\]
The estimates of Proposition~\ref{prop_control} are therefore sufficient to establish the properties of Definition~\ref{d_sol}. \end{remark}

We will now establish the well-posedness of solutions to the skeleton equation defining the rate function \eqref{rate_function} under the following additional assumption.  We recall that the space $L^1_{\textrm{loc}}(\R^d)$ is equipped with the topology of local strong $L^1$-convergence.  That is, a sequence $\rho_n\rightarrow\rho$ in $L^1_{\textrm{loc}}(\R^d)$ if and only if $\rho_n\rightarrow\rho$ strongly in $L^1(B_R)$ for every $R>0$.

\begin{assumption}\label{as_equiv} Assume that $\Phi\in\textrm{C}([0,\infty))\cap\textrm{C}^1_{\textrm{loc}}((0,\infty))$ satisfies one of the following two conditions.
\begin{enumerate}
\item We have that $\Phi^\frac{1}{2}\colon[0,\infty)\rightarrow[0,\infty)$ is concave, and there exists $c\in(0,\infty)$ and $p\in[2,\infty)$ such that, for every $\xi\in(0,\infty)$,
\[\Big(\frac{\Phi^\frac{1}{2}(\xi)}{\Phi'(\xi)}\Big)^p\leq c(\xi+1).\]
\end{enumerate}
\begin{enumerate}
\item We have that $\Phi^\frac{1}{2}\colon[0,\infty)\rightarrow[0,\infty)$ is convex, there exists $c\in(0,\infty)$ such that
\[\sup_{\{\xi\geq 1\}}\abs{\frac{\Phi(\xi+1)}{\Phi(\xi)}}\leq c,\]
and for every $M\in(0,1)$ there exists $c\in(0,\infty)$ depending on $M$ such that
\[ \sup_{\{\xi\geq M\}}\abs{\frac{\Phi^\frac{1}{2}(\xi)}{\Phi'(\xi)}}\leq c.\]
\end{enumerate}
\end{assumption}

\begin{definition}\label{def_skel} Let $\Phi$ and $\sigma$ satisfy Assumptions~\ref{assume_1}, \ref{as_compact}, and \ref{as_equiv}, let $\gamma\in(0,\infty)$, let $\rho_0\in \Ent_{\Phi,\gamma}(\R^d)$, and let $g\in L^2(\R^d\times[0,T])^d$.  A solution of the skeleton equation
\begin{equation}\label{skel_eq} \partial_t\rho = \Delta\Phi(\rho)-\nabla\cdot(\sigma(\rho)g)\;\;\textrm{with}\;\;\rho(\cdot,0)=\rho_0,\end{equation}
is a continuous $L^1_{\textrm{loc}}(\R^d)$-valued function $\rho\in L^\infty([0,T];\Ent_{\Phi,\gamma}(\R^d))$ that satisfies the following two properties.
\begin{enumerate}
\item{The relative entropy estimate}:  we have that $\sup_{t\in[0,T]}\int_{\R^d}\Psi_{\Phi,\gamma}(\rho)+\int_0^T\int_{\R^d}\abs{\nabla\Phi^\frac{1}{2}(\rho)}^2<\infty$.
\item{The equation}:   for every $\psi\in\C^\infty_c(\R^d)$ and $t\in[0,T]$,
\[\int_{\R^d}\rho(x,t)\psi(x,t)  = \int_{\R^d}\rho_0(x)\psi(x) -\int_0^t\int_{\R^d}\nabla\Phi(\rho)\cdot\nabla\psi+\int_0^t\int_{\R^d}\sigma(\rho)g\cdot\nabla\psi.\]
\end{enumerate}
\end{definition}

\begin{thm}\label{thm_skel} Let $\Phi$ and $\sigma$ satisfy Assumptions~\ref{assume_1}, \ref{as_compact}, and \ref{as_equiv}, let $\gamma\in(0,\infty)$, let $\rho_0\in \Ent_{\Phi,\gamma}(\R^d)$, and let $g\in L^2(\R^d\times[0,T])^d$. Then there exists a unique solution $\rho$ of \eqref{skel_eq} in the sense of Definition~\ref{def_skel}.  Furthermore, for some $c\in(0,\infty)$,
\[\sup_{t\in[0,T]}\int_{\R^d}\Psi_{\Phi,\gamma}(\rho)+\int_0^T\int_{\R^d}\abs{\nabla\Phi^\frac{1}{2}(\rho)}^2\leq c\Big(\int_{\R^d}\Psi_{\Phi,\gamma}(\rho_0)+\int_0^T\int_{\R^d}\abs{g}^2\Big).\]
\end{thm}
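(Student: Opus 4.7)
The plan is to derive a formal entropy estimate, make it rigorous through a parabolic regularization, and then establish uniqueness by a deterministic doubling-of-variables argument modeled on Theorem~\ref{thm_unique}.

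\emph{A priori energy estimate.} Formally testing \eqref{skel_eq} against $\Psi_{\Phi,\gamma}'(\rho)=\log(\Phi(\rho)/\Phi(\gamma))$ yields
$$\frac{d}{dt}\int_{\R^d}\Psi_{\Phi,\gamma}(\rho)+4\int_{\R^d}|\nabla\Phi^{\frac{1}{2}}(\rho)|^2=\int_{\R^d}\frac{2\sigma(\rho)}{\Phi^{\frac{1}{2}}(\rho)}\,g\cdot\nabla\Phi^{\frac{1}{2}}(\rho),$$
where I used the identity $\Phi'(\rho)^2|\nabla\rho|^2/\Phi(\rho)=4|\nabla\Phi^{\frac{1}{2}}(\rho)|^2$. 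Assumption~\ref{as_compact}(v) bounds $\sigma(\rho)/\Phi^{\frac{1}{2}}(\rho)$, so Cauchy--Schwarz and Young's inequality absorb the gradient term on the left and leave a constant times $\|g\|^2_{L^2}$ on the right. Integrating in time gives the claimed estimate.

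\emph{Existence.} I would mollify $\Phi,\sigma$ to smooth $\C^2$-bounded $\Phi_n,\sigma_n$, smooth $\rho_0$ within $(\Ent_{\Phi_n,\gamma}\cap L^2_\gamma)(\R^d)$, truncate $g$ in $\C^\infty_c(\R^d\times[0,T])^d$, and regularize by adding a viscosity $\eta\Delta\rho$. Classical parabolic theory, in the spirit of Proposition~\ref{prop_entropy} with the controlled flux playing the role of an additional conservative forcing, produces smooth solutions $\rho^{n,\eta}$ satisfying the Step~1 estimate uniformly in $n$ and $\eta$. The uniform bounds on $\sup_t\int\Psi_{\Phi_n,\gamma}(\rho^{n,\eta})$ and on $\nabla\Phi_n^{\frac{1}{2}}(\rho^{n,\eta})\in L^2(\R^d\times[0,T])$, together with time-equicontinuity read off from the equation, yield precompactness in $L^1([0,T];L^1_{\textrm{loc}}(\R^d))$ by Aubin--Lions. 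The nondegeneracy of $\Phi$ in Assumption~\ref{assume_1} upgrades this to a.e.\ convergence of $\rho^{n,\eta}$, after which continuity of $\sigma$ and $L^2$-integrability of $g$ allow passage to the limit in every nonlinear term, producing a solution in the sense of Definition~\ref{def_skel}; lower semicontinuity transfers the energy estimate.

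\emph{Uniqueness.} This is the main obstacle. For two solutions $\rho^1,\rho^2$ with the same initial data I would repeat the doubling-of-variables computation from the proof of Theorem~\ref{thm_unique}, now stripped of the It\^o and Stratonovich-to-It\^o corrections and with $-\nabla\cdot(\sigma(\rho)g)$ in place of the stochastic flux. The cut-off functions $\varphi_\beta,\zeta_M,\alpha_R$ play exactly the same role, the defect-measure and Young-inequality terms cancel as in \eqref{3_22} and \eqref{3_12}, and the conservative term generated by $g$ is handled by writing the difference through $\sigma(\rho^1)-\sigma(\rho^2)$ and controlling it against $\nabla\Phi^{\frac{1}{2}}(\rho^i)$ using Assumption~\ref{as_compact}(v). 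Here Assumption~\ref{as_equiv} is decisive: under (1), the concavity of $\Phi^{\frac{1}{2}}$ combined with the $L^p$-type growth of $\Phi^{\frac{1}{2}}/\Phi'$ controls the doubling error pointwise in the velocity variable, while (2) achieves the same through the alternative convex bound on $\Phi^{\frac{1}{2}}/\Phi'$ near $\{\rho\geq M\}$ together with the boundedness of $\Phi(\xi+1)/\Phi(\xi)$. The spatial cutoff error at infinity is treated exactly as for \eqref{cutoff_term}, via the case distinction on $d=1,2,\geq 3$ and the Sobolev embedding applied to $\Phi^{\frac{1}{2}}(\rho)-\Phi^{\frac{1}{2}}(\gamma)$. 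I expect all remaining pieces to be direct adaptations of the machinery already developed, with the uniqueness under Assumption~\ref{as_equiv}(1)--(2) being the genuinely delicate point.
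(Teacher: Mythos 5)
The paper's own proof of Theorem~\ref{thm_skel} is a one-line citation to \cite[Sections~6, 7]{FehGesHey2024}, so your proposal is reconstructing an argument the paper outsources entirely. Your a priori computation is correct: testing formally against $\Psi_{\Phi,\gamma}'(\rho)$, using $\Phi'(\rho)^2|\nabla\rho|^2/\Phi(\rho)=4|\nabla\Phi^{1/2}(\rho)|^2$ and the bound $|\sigma|\le c\,\Phi^{1/2}$ from Assumption~\ref{as_compact}(v), then absorbing via Young's inequality, does give the claimed estimate; and your existence scheme (mollified coefficients, added viscosity, Aubin--Lions compactness, a.e.\ convergence, lower semicontinuity) is the natural deterministic analogue of Proposition~\ref{prop_entropy} and should go through.

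The gap is in uniqueness. You propose to ``repeat the doubling-of-variables computation from Theorem~\ref{thm_unique}, now stripped of the It\^o corrections,'' invoking \eqref{3_22} and \eqref{3_12}. But that computation lives on the \emph{kinetic} formulation: it starts from the identity \eqref{3_6} for the mollified kinetic functions $\chi^{\ve,\d}_{t,i}$, which requires each $\rho^i$ to come equipped with a kinetic defect measure $q^i$ satisfying $\delta_{\rho}\Phi'(\xi)|\nabla\rho|^2\le q^i$. Definition~\ref{def_skel} supplies no such object --- it is a plain weak formulation plus an entropy estimate. The missing, nontrivial step is to prove that every weak solution in the sense of Definition~\ref{def_skel} actually satisfies a renormalized kinetic equation with $q=\delta_\rho\Phi'(\xi)|\nabla\rho|^2$ and no excess defect. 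This is a chain-rule (DiPerna--Lions renormalization) lemma, and it is delicate exactly because one only controls $\nabla\Phi^{1/2}(\rho)\in L^2$: recovering $\nabla\rho$ from $\nabla\Phi^{1/2}(\rho)$ involves the ratio $\Phi^{1/2}/\Phi'$, which may degenerate or blow up near $\rho=0$. That ratio is precisely what the two alternatives in Assumption~\ref{as_equiv} tame. So Assumption~\ref{as_equiv} is not, as you write, ``controlling the doubling error pointwise in the velocity variable''; its decisive role is \emph{before} the doubling, in establishing the weak-to-kinetic equivalence that makes the doubling argument applicable at all. Once that equivalence lemma is in place --- which is the content of \cite[Section~6]{FehGesHey2024} --- the doubling machinery does transfer essentially as you describe, including the spatial cutoff at infinity by dimension-splitting and Sobolev embedding. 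You correctly sense where the delicacy sits but attribute it to the wrong stage of the argument, and the equivalence step should be flagged as a lemma in its own right rather than folded into ``direct adaptations of the machinery already developed.''
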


\begin{proof}  The proof is a consequence of \cite[Sections~6, 7]{FehGesHey2024}.  \end{proof}

We are now prepared to state the large deviations principle.

\begin{thm}\label{prop_collapse}  Let $\Phi$ and $\sigma$ satisfy Assumptions~\ref{assume_1},  \ref{as_compact}, and \ref{as_equiv} and let $\a^\ve$ be a sequence that satisfies that $A^\ve,\a^\ve\rightarrow 0$ as $\ve\rightarrow 0$ and that, if $d\geq 2$,
\[\norm{a^\ve}^2_{\ell^\infty} (A^\ve)^{1-\frac{d+2}{2}}\rightarrow 0\;\;\textrm{and}\;\;\ve(\a^\ve)^{-d-2}(A^\ve)^{-\frac{d}{2}}\rightarrow 0,\]
and, if $d=1$,
\[\norm{a^\ve}_{\ell^\infty}\rightarrow 0\;\;\textrm{and}\;\;\ve(\a^\ve)^{-d-2}(A^\ve)^{-\frac{d}{2}}\rightarrow 0.\]
Then, the rate functions $I_{\rho_0}$  defined in \eqref{rate_function} are good rate functions with compact level sets on compact sets, and for every $\rho_0\in\Ent_{\Phi,\gamma}(\R^d)$ the solutions $\{\rho^\ve(\rho_0)\}_{\ve\in(0,1)}$ of \eqref{ldp_eq} satisfy a large deviations principle with rate function $I_{\rho_0}$ on $L^1([0,T];L^1_{\textrm{loc}}(\R^d))$.  Furthermore, the solutions satisfy a uniform large deviations principle on subsets of $(L^1_\gamma\cap \Ent_{\Phi,\gamma})(\R^d)$ with uniformly bounded entropy with respect to weakly $L^1(\R^d)$-compact subsets.
\end{thm}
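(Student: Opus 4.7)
The plan is to apply the weak convergence / Laplace principle approach of Budhiraja, Dupuis, and Maroulas \cite{BudDupMar2008}. This reduces both the LDP and its uniform variant to the verification of two sufficient conditions, phrased in terms of the controlled SPDE \eqref{ldp_1} whose well-posedness is provided by Proposition~\ref{prop_control}: (i) for every $N\in(0,\infty)$ and every family $(\mathbf{g}^\ve)_{\ve\in(0,1)}$ of $\F_t$-predictable controls satisfying $\P$-a.s.\ $\norm{\mathbf{g}^\ve}^2_{L^2([0,T];\ell^2(\N)^d)^2}\leq N$ with weak limit $\mathbf{g}^\ve\rightharpoonup\mathbf{g}$, the solutions $\rho^{\ve,\mathbf{g}^\ve}$ converge in distribution in $L^1([0,T];L^1_{\textrm{loc}}(\R^d))$ to the skeleton solution $\rho(g)$ of \eqref{skel_eq} associated with $g=\sum_{k}g_k(t)f_k(x)$; and (ii) the map $g\mapsto \rho(g)$ is continuous from $\{g\colon\norm{g}^2_{L^2}\leq N\}$ endowed with the weak topology into $L^1([0,T];L^1_{\textrm{loc}}(\R^d))$. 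For the uniform LDP, I would combine these with the uniform-over-compacts refinement of the Budhiraja--Dupuis characterization.

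First I would verify that $I_{\rho_0}$ is a good rate function. For any sequence $(\rho_n)$ in the level set $\{I_{\rho_0}\leq N\}$ I select near-optimal controls $g_n$ with $\norm{g_n}^2_{L^2}\leq 2N+1$ and $\rho_n=\rho(g_n)$. Extracting a weakly convergent subsequence $g_n\rightharpoonup g$, Theorem~\ref{thm_skel} provides the uniform bound
\[\sup_n\Big(\sup_{t\in[0,T]}\int_{\R^d}\Psi_{\Phi,\gamma}(\rho_n)+\int_0^T\int_{\R^d}\abs{\nabla\Phi^\frac{1}{2}(\rho_n)}^2\Big)<\infty,\]
and an Aubin--Lions compactness argument, using the skeleton equation to control $\partial_t\rho_n$ in a negative Sobolev space, yields strong convergence $\rho_n\to\tilde\rho$ in $L^1([0,T];L^1_{\textrm{loc}}(\R^d))$. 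Passing to the limit in the skeleton equation via weak-strong pairing identifies $\tilde\rho=\rho(g)$, and weak lower semicontinuity of $\norm{\cdot}^2_{L^2}$ gives $I_{\rho_0}(\tilde\rho)\leq N$. The same argument, allowing $\rho_0$ to vary in a compact subset of $\Ent_{\Phi,\gamma}(\R^d)$, gives compactness of level sets on compacts and also establishes (ii).

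The heart of the proof is (i). Applying Proposition~\ref{prop_control} to $\rho^{\ve,\mathbf{g}^\ve}$, the scaling hypotheses force $\ve(\a^\ve)^{-d}\to 0$, $\ve(\a^\ve)^{-d-2}(A^\ve)^{-d/2}\to 0$, and $\norm{a^\ve}^2_{\ell^\infty}(A^\ve)^{1-(d+2)/2}\to 0$, while the boundedness $\norm{\mathbf{g}^\ve}^2_{L^2}\leq N$ keeps the control-induced contributions uniformly bounded. Combined with the entropy and dissipation estimates, this gives tightness of the joint laws of $(\rho^{\ve,\mathbf{g}^\ve},\mathbf{g}^\ve)$ in $L^1([0,T];L^1_{\textrm{loc}}(\R^d))$ against the weak topology on the control space. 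By the Skorokhod representation theorem I realize a subsequence on a common probability space with almost sure strong convergence of the densities and weak convergence of the controls, and then pass to the limit term by term in the kinetic formulation of \eqref{ldp_1}. The It\^o stochastic integral vanishes in $L^2(\O)$ because its quadratic variation is controlled by $\langle\sqrt{\ve}\xi^{\mathbf{a}^\ve}\rangle_1\leq c\ve(\a^\ve)^{-d}\to 0$; the Stratonovich--It\^o correction and the defect measure term involving $\langle\sqrt{\ve}\nabla\cdot\xi^{\mathbf{a}^\ve}\rangle_1$ vanish by the remaining scaling hypotheses; and the mollified control term converges to $-\nabla\cdot(\sigma(\bar\rho)g)$ by weak-strong pairing of $\sigma(\rho^{\ve,\mathbf{g}^\ve})\to\sigma(\bar\rho)$ in $L^2_{\textrm{loc}}$ against $g*\kappa^{\a^\ve}\rightharpoonup g$.

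The hard part will be handling the spatially constant noise component $\sqrt{1-\exp(-A^\ve\abs{x}^2)}\langle a^\ve,\tilde g^\ve\rangle_{\ell^2,t}$ in the control term of \eqref{ldp_1}. Unlike the mollified white-noise control $g*\kappa^{\a^\ve}$, whose $L^2(\R^d\times[0,T])$-energy is uniformly bounded by $N$, the spatial profile $\sqrt{1-\exp(-A^\ve\abs{x}^2)}\to 1$ pointwise as $A^\ve\to 0$, so the constant-mode control $\tilde g^\ve$ can impart an enormous contribution at spatial infinity at virtually no cost to its $\ell^2$-energy. This effect is absorbed precisely by the additional entropy term $c\norm{a^\ve}^2_{\ell^\infty}\norm{\tilde g^\ve}^2_{\ell^2}(A^\ve)^{1-(d+2)/2}$ in Proposition~\ref{prop_control}, and the hypothesis $\norm{a^\ve}^2_{\ell^\infty}(A^\ve)^{1-(d+2)/2}\to 0$ (respectively $\norm{a^\ve}_{\ell^\infty}\to 0$ in $d=1$) is tailored so that this contribution vanishes in the limit, ensuring that only the $\sigma(\bar\rho)g$ term survives and that $\bar\rho=\rho(g)$ by uniqueness in Theorem~\ref{thm_skel}. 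The uniform LDP then follows by repeating (i) uniformly over subsets of $(L^1_\gamma\cap\Ent_{\Phi,\gamma})(\R^d)$ with uniformly bounded entropy, on which the weak $L^1$-compactness transfers to strong $L^1_{\textrm{loc}}$-compactness of the associated skeleton solutions via the same Aubin--Lions argument.
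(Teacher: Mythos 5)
Your proposal is correct and follows essentially the same route as the paper: the paper's own proof is a terse reference chain to the weak convergence approach of Budhiraja--Dupuis--Maroulas, the uniform Laplace/LDP equivalence of Budhiraja--Dupuis--Salins, and the methods of \cite[Theorem~29]{FehGes19}, relying on Theorem~\ref{thm_unique}, Propositions~\ref{prop_entropy}, \ref{prop_measure}, \ref{prop_control}, and Theorems~\ref{thm_exist}, \ref{thm_skel}, and your outline fills in precisely what that chain of references contains, including the correct identification of the spatially constant noise mode and the scaling hypotheses as the new difficulty specific to the whole-space setting.
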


\begin{proof}  The large deviations principle is a consequence of the weak approach to large deviations \cite[Theorem~6]{BudDupMar2008}, the equivalence of uniform Laplace and large deviations principles with respect to compact subsets of the initial data \cite[Theorem~4.3]{BudDupSal}, and the methods of \cite[Theorem~29]{FehGes19} which rely on Theorem~\ref{thm_unique}, Proposition~\ref{prop_entropy}, Proposition~\ref{prop_measure}, Theorem~\ref{thm_exist}, Proposition~\ref{prop_control}, and Theorem~\ref{thm_skel} above.\end{proof}

\section*{Acknowledgements}  The first author acknowledges support from the National Science Foundation DMS-Probability Standard Grant 2348650, the Simons Foundation Travel Grant MPS-TSM-00007753, and the Louisiana Board of Regents RCS Grant 20130014386.  The second author acknowledges support by the Max Planck Society through the Research Group ``Stochastic Analysis in the Sciences.'' This work was funded by the European Union (ERC, FluCo, grant agreement No. 101088488). Views and opinions expressed are however those of the author(s) only and do not necessarily reflect those of the European Union or of the European Research Council. Neither the European Union nor the granting authority can be held responsible for them.

\bibliography{WhiteNoise}
\bibliographystyle{plain}

\end{document}